\numberwithin{equation}{section}
\newtheorem{theorem}{Theorem}
\newtheorem{lemma}{Lemma}[section]
\newtheorem{prop}{Proposition}[section]
\newtheorem{conj}{Conjecture}
\theoremstyle{remark}
\newtheorem{remark}{Remark}
\newcommand{\R}{\mathbb{R}}
\newcommand{\C}{\mathbb{C}}
\newcommand{\Q}{\mathbb{Q}}
\newcommand{\Z}{\mathbb{Z}}
\newcommand{\N}{\mathbb{N}}
\newcommand{\X}{\mathbb{X}}
\newcommand{\F}{\mathcal{F}}
\newcommand{\B}{\mathcal{B}}
\newcommand{\D}{\mathcal{D}}
\newcommand{\W}{\mathcal{W}}
\newcommand{\GL}{\mathrm{GL}}
\newcommand{\PGL}{\mathrm{PGL}}
\newcommand{\SL}{\mathrm{SL}}
\newcommand{\SU}{\mathrm{SU}}
\newcommand{\gen}{\mathrm{gen}}
\newcommand{\diag}{\mathrm{diag}}
\newcommand{\aut}{\mathrm{aut}}
\newcommand{\Ad}{\mathrm{Ad}}
\title{Applications of analytic newvectors for $\mathrm{GL}(n)$}
\author{Subhajit Jana}
\address{Max Planck Institute for Mathematics, Vivatgasse 7, 53111 Bonn, Germany.}
\email{subhajit@mpim-bonn.mpg.de}
\begin{document}

\begin{abstract}
We provide a few natural applications of the analytic newvectors, initiated in \cite{JN}, to some analytic questions in automorphic forms for $\mathrm{PGL}_n(\mathbb{Z})$ with $n\ge 2$, in the archimedean analytic conductor aspect. We prove an orthogonality result of the Fourier coefficients, a density estimate of the non-tempered forms, an equidistribution result of the Satake parameters with respect to the Sato--Tate measure, and a second moment estimate of the central $L$-values as strong as Lindel\"of on average. We also prove the random matrix prediction about the distribution of the low-lying zeros of automorphic $L$-function in the analytic conductor aspect. The new ideas of the proofs include the use of analytic newvectors to construct an approximate projector on the automorphic spectrum with bounded conductors and a soft local (both at finite and infinite places) analysis of the geometric side of the Kuznetsov trace formula.
\end{abstract}

\maketitle

\section{Introduction}

\subsection{Motivation}
Given a finite family $\F$ of automorphic representations $\pi$ of a reductive group $G$ several important questions can be formulated in terms of asymptotic behaviour of the average
\begin{equation}\label{average-problem}
|\F|^{-1}\sum_{\pi\in \F}A(\pi),
\end{equation}
as the size of the family $|\F|$ tends off to infinity. Here $A(\pi)$ can be chosen to be interesting arithmetic objects attached to $\pi$, e.g., Fourier coefficients $\lambda_\pi$, or powers of central $L$-values $|L(1/2,\pi)|^k$.
Often acquiring knowledge about the statistical behaviour of $A(\pi)$ by making such averages is easier to handle than understanding $A(\pi)$ individually, due to availability of strong analytic techniques e.g. (relative) trace formulae. The families usually are defined in terms of some intrinsic attributes of the automorphic representations $\pi$, such as, their levels (non-archimedean), weights, spectral parameters, Laplace eigenvalues (archimedean), or analytic conductors. A plethora of works have been done in these aspects on various higher rank and higher dimensional arithmetic locally symmetric spaces among which we refer to \cite{B1, B2, B3, BBR, BBM, BrM}.

Let $G$ be the group $\PGL_r(\R)$. Let $C(\pi)$ be the archimedean analytic conductor of an automorphic representation for $\PGL_r(\Z)$, defined by Iwaniec--Sarnak \cite{IS}, see \S\ref{l-function-condutor}. Consider the family $\F_X$ defined by 
$$\{\text{automorphic representation $\pi$ for }\PGL_r(\Z)\mid C(\pi)<X\}.$$ 
Obtaining an asymptotic formula or estimating size of \eqref{average-problem} when $\F=\F_X$ as $X\to \infty$
for various $A(\pi)$ is the motivating problem in this article. For instance, if $A(\pi)=|L(1/2,\pi)|^k$, an asymptotic formula for \eqref{average-problem} for large enough $k$ would yield a \emph{sub-convex} bound for $L(1/2,\pi)$ in terms of $C(\pi)$. This seems to be a potential approach to understand the infamous \emph{conductor-drop} issue. We refer to \cite{K} for a survey on various other averaging problems over $\F_X$.

As a side note, we record that $|\F_X|\asymp X^{r-1}$. Moreover, from Langlands classification of the continuous spectrum (described in the next subsection) and a few inductive applications of the Weyl law (with respect to the analytic conductors) imply that the size of the subfamily
$$\{\pi\text{ cuspidal }\mid \pi\in \F_X\}$$
is also $\asymp X^{r-1}$.

In this article our goal is to initiate understanding of a smoothened version of the averages as in \eqref{average-problem} by constructing an approximate projector on $\F_X$ using \emph{archimedean analytic newvectors} (see \S\ref{analytic-newvector}) and subsequently analysing the averages by means of the Kuznetsov trace formula.
An application of this kind in the question of weighted counting of automorphic forms for $\PGL_r(\Z)$, for $r\ge 2$, with bounded analytic conductors is provided in \cite[Theorem 9]{JN}. In this article we will give a few other averaging applications of this kind which (including their proofs) are mostly influenced by the recent works \cite{B1, B2, BBR, BBM, GK, GSWH, MT, ST}.

\subsection{Main Theorems}
Let $G:=\PGL_r(\R)$, $\Gamma:=\PGL_r(\Z)$, and $\X:=\Gamma\backslash G$. By $\hat{\X}$ we denote the isomorphism class of irreducible unitary standard automorphic representations of $G$ in $L^2(\X)$.
Here by \emph{standard}, we mean the automorphic representations which appear in the spectral decomposition of $L^2(\X)$. Similarly, by $\hat{\X}_\gen$ we denote the subclass of \emph{generic} representations in $\hat{\X}$ i.e., class of representations which have (unique) Whittaker models (see \S\ref{automorphic-forms}). 

We mention Langlands' descriptions for $\hat{\X}$ and $\hat{\X}_\gen$. We refer to \cite[Section 5]{CPS2}, \cite{MW} for details.
We take a partition $r=r_1+\dots+r_k$. Let $\pi_j$ be a unitary \emph{square-integrable} automorphic representation (see \cite{MW} for the classification of the square-integrable representations of $\GL(n)$) for $\GL_{r_j}(\Z)$ (if $r_j=1$ we take $\pi_j$ to be a unitary character). Consider the unitary induction $\Pi$ from the Levi $\GL(r_1)\times\dots\times\GL(r_k)$ to $G$ of the tensor product $\pi_1\otimes\dots\otimes\pi_k$. There exists a unique irreducible constituent of $\Pi$ which we denote by the isobaric sum $\pi_1\boxplus\dots\boxplus\pi_k$. Then Langlands classification says that every element in $\hat{\X}$ is isomorphic to such an isobaric sum. Further, we start with unitary \emph{cuspidal} automorphic representations $\pi_j$ for $\GL_{r_j}(\Z)$ and proceed with the same construction as above to obtain an isobaric sum. Then another theorem of Langlands asserts that every element in $\hat{\X}_\gen$ is isomorphic to such an isobaric sum.

Let $d\mu_\aut$ be the automorphic Plancherel measure on $\hat{\X}$ compatible with the $G$-invariant probability measure on $\X$ (see \cite[Chapter $11.6$]{G} for details). Let $C(\pi)$ be the analytic conductor of $\pi\in \hat{\X}_\gen$ (see \S\ref{l-function-condutor}).

\subsection{Orthogonality of Fourier coefficients}
The Fourier coefficients of automorphic forms on $\GL(r)$ for $r\ge 2$ behave similarly to the characters on $\GL(1)$. In particular, one expects that the Fourier coefficients satisfy an orthogonality relation when averaged over a sufficiently large family, similar to characters. In \cite[Conjecture 1.1]{Zh} an orthogonality conjecture of the Fourier coefficients is formulated, which loosely states that
\begin{equation}\label{conjecture-orthogonality}
\lim_{T\to \infty}\frac{\sum_{\varphi\text{ cuspidal, }\nu_\varphi\le T}{\lambda_\varphi(m)\overline{\lambda_\varphi(n)}}{L(1,\varphi,\Ad)}^{-1}}{\sum_{\varphi\text{ cuspidal, }\nu_\varphi\le T}{L(1,\varphi,\Ad)}^{-1}}=\delta_{m=n}.
\end{equation}
Here $\varphi$ are spherical (i.e. also unramified at infinity) cusp forms on $\X$, $\lambda_\varphi$ are the Fourier coefficients of $\varphi$, $\nu_\varphi$ are the Laplace eigenvalues of $\varphi$, and $m,n\in \N^{r-1}$.
For detailed notations see \S\ref{basic-notations}.

In Theorem \ref{orthogonality-full-spectrum} we prove a variant of the conjecture in \eqref{conjecture-orthogonality} in the analytic conductor aspect (i.e. the Laplace eigenvalue $\nu_\varphi$ replaced by the analytic conductor of $\varphi$) for general $r$.

\begin{theorem}\label{orthogonality-full-spectrum}
Let $m,n\in \N^{r-1}$ and $X>1$ be a large number tending off to infinity, such that 
$$\min(n_{r-1}^{r-1}\dots n_1m_1^{r-1}\dots m_{r-1},m_{r-1}^{r-1}\dots m_1 n_1^{r-1}\dots n_{r-1})\ll X^r$$
with sufficiently small implied constant. There exists $J_X:\hat{\X}_\gen\to \R_{\ge 0}$ with 
\begin{itemize}
    \item $J_X(\pi)\gg 1$ whenever $\pi$ is cuspidal with $C(\pi)<X$,
    \item $\int_{\hat{\X}_\gen}\frac{J_X(\pi)}{\ell(\pi)}d\mu_\aut(\pi)\asymp X^{r-1}$,
\end{itemize}
such that,
$$\int_{\hat{\X}_\gen}\overline{\lambda_\pi(m)}{\lambda_\pi(n)}\frac{J_X(\pi)}{\ell(\pi)}d\mu_\aut(\pi)=\delta_{m=n}\int_{\hat{\X}_\gen}\frac{J_X(\pi)}{\ell(\pi)}d\mu_\aut(\pi),$$
where $\ell(\pi)$ is a positive number, defined in \eqref{defn-ell-pi}, depending only on the non-archimedean data attached to $\pi$.
\end{theorem}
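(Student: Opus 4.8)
The plan is to realize the weighted average $\int \overline{\lambda_\pi(m)}\lambda_\pi(n) J_X(\pi)\ell(\pi)^{-1}\,d\mu_{\aut}(\pi)$ as the geometric side of a Kuznetsov-type (pre-)trace formula, and then show that, under the stated size restriction on $m,n$, the geometric side collapses to the identity contribution $\delta_{m=n}$ times the total mass. Concretely, I would take $J_X$ to be built from an archimedean analytic newvector test function: choose $f_\infty$ on $G$ supported on a small ball around the identity depending on $X$ (as in \S\ref{analytic-newvector} and \cite{JN}), so that the associated spectral projector $\pi\mapsto J_X(\pi)$ is essentially the trace of $\pi(f_\infty)$ against the newvector, hence is $\gg 1$ on representations with $C(\pi)<X$; the companion non-archimedean factor $\ell(\pi)^{-1}$ is exactly the local weight that makes the Plancherel/Whittaker normalization match. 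The first bullet (positivity and lower bound on cusp forms with bounded conductor) and the second bullet (total mass $\asymp X^{r-1}$, which is the Weyl-law count $|\F_X|$) should follow directly from the properties of analytic newvectors established in \cite{JN}, essentially \cite[Theorem 9]{JN}.

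The heart of the matter is the main identity. I would expand $\lambda_\pi(n)\overline{\lambda_\pi(m)}$ via the Fourier–Whittaker expansion and feed the test function $f = f_\infty \cdot f_{\mathrm{fin}}$ into the Kuznetsov formula for $\PGL_r(\Z)$: the spectral side reproduces $\int \overline{\lambda_\pi(m)}\lambda_\pi(n)J_X(\pi)\ell(\pi)^{-1}\,d\mu_{\aut}$ (with $\ell(\pi)$ absorbing the arithmetic normalization of the Fourier coefficients and the ramified local factors), while the geometric side is a sum over the Weyl group of Kloosterman-type terms indexed by moduli $c$, weighted by Kloosterman sums $S(m,n;c)$ and an archimedean Bessel-type integral coming from $f_\infty$. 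The diagonal/identity Weyl element contributes precisely $\delta_{m=n}$ times $\int J_X(\pi)\ell(\pi)^{-1}\,d\mu_{\aut}$. The remaining task is to show all non-identity terms vanish: because $f_\infty$ is supported in a tiny neighborhood of the identity of "radius" governed by $X$, the archimedean weight forces the modulus $c$ to be small, i.e. $c$ lies below a threshold like $X/(\text{monomials in }m,n)$; the hypothesis $\min(\dots)\ll X^r$ with small implied constant is exactly calibrated so that this threshold is $<1$, leaving no admissible $c$ and killing every off-diagonal contribution.

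The key steps, in order: (i) recall from \cite{JN} the archimedean newvector test function $f_\infty$ and its spectral transform $J_X$, verifying the two bullet points; (ii) pair it with the standard non-archimedean data to define $\ell(\pi)$ via \eqref{defn-ell-pi} and identify the spectral side of Kuznetsov with the quantity in the theorem; (iii) compute the geometric side, isolating the identity term as $\delta_{m=n}\int J_X \ell^{-1}\,d\mu_{\aut}$; (iv) bound the support in the $c$-variable of each Weyl-cell integral against $f_\infty$, obtaining an explicit cutoff in terms of $X$ and the entries of $m,n$; (v) invoke the size hypothesis to conclude the cutoff excludes all $c\ge 1$, so the off-diagonal vanishes identically.

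The main obstacle I anticipate is step (iv): controlling, for every Weyl element $w$, the precise support in the Kloosterman modulus of the archimedean integral transform of $f_\infty$, uniformly in $m,n$ and in the "conductor-drop" regime where the entries of $m,n$ are unbalanced. This requires a careful but "soft" local analysis at the archimedean place (the stationary-phase-free support argument emphasized in the abstract) together with the matching non-archimedean support bound at the finite places; getting the two monomials $n_{r-1}^{r-1}\cdots n_1 m_1^{r-1}\cdots m_{r-1}$ and its transpose to appear symmetrically — reflecting the two orderings forced by $w$ and $w^{-1}$ — is where the bookkeeping is delicate, and it is what dictates the exact shape of the hypothesis.
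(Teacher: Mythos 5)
Your proposal follows essentially the same route as the paper: $J_X$ is the Bessel distribution of the self-convolution of a smoothed, $L^1$-normalized characteristic function of the approximate congruence subgroup $K_0(X,\tau)$ of \eqref{defn-congruence-subgroup} (note this is not a shrinking ball --- only the lower-left block is contracted by $X$, which is what gives $\mathrm{vol}(K_0(X,\tau))\asymp X^{1-r}$ and hence total mass $\asymp X^{r-1}$), the spectral side of the Kuznetsov formula (Proposition \ref{kuznetsov}) produces the desired average, and your step (iv) is precisely the paper's Lemma \ref{support-gamma}: the support condition forces the last row of $\gamma$ to be $(0,\dots,0,1)$, which combined with Friedberg's classification of admissible Weyl elements (Lemma \ref{kloosterman-support}) kills every non-identity Bruhat cell. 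The symmetric pair of monomials in the hypothesis arises, as you anticipated, from running the support argument for both $\gamma$ and $\gamma^{-1}$.
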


We remark that the number of cuspidal representations of $G$ with analytic conductor bounded by $X$ is of size $X^{r-1}$. 

In \cite{Bru} the conjecture \eqref{conjecture-orthogonality} is first proved for $r=2$. Recently, in \cite[Theorem, 5]{B1}, \cite[Theorem 5]{BBR}, \cite{GK} for $r=3$, and in \cite{GSWH} for $r=4$ the conjecture has been settled in the Laplace eigenvalue aspect. On the other hand, a variant of the conjecture \eqref{conjecture-orthogonality} without the harmonic weights $L(1,\pi,\Ad)$ has been proved in \cite[Theorem 1.5]{MT} for general $r$.

\begin{remark}\label{project-on-cusp}
In Theorem \ref{orthogonality-full-spectrum}, unlike \eqref{conjecture-orthogonality}, we have averaged over not just the cuspidal spectrum but also included the continuous spectrum. However, one can modify the proof of Theorem \ref{orthogonality-full-spectrum} to have an orthogonality result over a subset of the cuspidal spectrum only, by killing off the contributions from the continuous spectrum using a projector attached to a matrix coefficient of a supercuspidal representation $\sigma$ of $\PGL_r(\Q_p)$ for some fixed prime $p$. We give such an example in Theorem \ref{orthogonality-cuspidal-spectrum} which loosely describes a statement of the following spirit. 

\noindent
\textbf{Theorem \ref{orthogonality-cuspidal-spectrum}}. If $m,n\in \N^{r-1}$ coprime with $p$ fixed, then
$$\lim_{X\to\infty}X^{1-r}\sum_{\substack{{C(\pi_\infty)<X,\pi_p=\sigma}\\{\pi\,\,\mathrm{cuspidal}}}}\frac{\overline{\lambda_\pi(m)}\lambda_\pi(n)} {L^{(p)}(1,\pi,\Ad)}=c_\sigma\delta_{m=n},$$
where $L^{(p)}$ is the partial $L$-function excluding the $p$-adic Euler factor, $c_\sigma$ is a constant depending on $\sigma$, and $\pi_p$ and $\pi_\infty$ are the $p$-adic and infinity components of an automorphic representation $\pi$ of $\PGL_r(\Z[1/p])$, respectively.

\end{remark}

\begin{remark}
We only show that the cut-off function $J_X(\pi)$ is non-negative over the relevant spectrum and large on the cuspidal spectrum if the analytic conductor is bounded by $X$. Although we do not show that $J_X(\pi)$ is negligible if the analytic conductor is large, we expect that it is true nevertheless (see \S\ref{analytic-newvector}).
\end{remark}

\subsection{Vertical Sato--Tate}
For a finite prime $p$ and $\pi\in \hat{\X}_\gen$ we denote the complex unordered $r$-tuple 
$$\mu^p(\pi):=(\mu^p_1(\pi),\dots,\mu^p_r(\pi)),\quad \sum_i\mu_i^p(\pi)=0,\mod 2\pi i/\log p,$$ 
to be the Langlands parameters attached to $\pi$ at $p$. Langlands parameters are invariant under the action of the Weyl group $W$. Let $T$ and $T_0$ be the standard maximal tori in $\mathrm{SL}_r(\C)$ and $\SU(r)$, respectively. We identify $\mu^p(\pi)$ as an element of $T/W$ by $\mu^p(\pi)\mapsto p^{\mu^p(\pi)}:=\mathrm{diag}(p^{\mu^p_1(\pi)},\dots,p^{\mu^p_r(\pi)})$. As a tuple $p^{\mu^p(\pi)}$ are the \emph{Satake parameters} attached to $\pi$ at $p$.
Let $\mu_{\mathrm{st}}$ be the push-forward of the Haar measure on $\SU(r)$ on $T_0/W$, which is also called to be the \emph{Sato--Tate} measure attached to the group $\GL(r)$. 

The Ramanujan conjecture at a finite prime $p$ for $\GL(r)$ predicts that all the Satake parameters will be purely imaginary, in other words, $p^{\mu^p(\pi)}$ will be in $T_0/W$, for all $\pi\in \hat{\X}_\gen$. Although the Ramanujan conjecture itself is still open, one can verify its truth on an average as in Theorem \ref{sato-tate}. For details we refer to \cite{Zh, BBR}. Theorem \ref{sato-tate} can also be regarded as a weighted equidistribution result of the Satake parameters with respect to the Sato--Tate measure. 

\begin{theorem}\label{sato-tate}
Let $f$ be a continuous function on $T/W$. Let $p$ be a finite prime and $p^{\mu^p(\pi)}\in T/W$ be the Satake parameters attached to $\pi$ at $p$. Then
$$\frac{\int_{\hat{\X}_\gen}f(p^{\mu^p(\pi)})\frac{J_X(\pi)}{\ell(\pi)}d\mu_\aut(\pi)}{\int_{\hat{\X}_\gen}\frac{J_X(\pi)}{\ell(\pi)}d\mu_\aut(\pi)}\to\int_{T_0/W}f(z)d\mu_{\mathrm{st}}(z),$$
as $X\to \infty$.
\end{theorem}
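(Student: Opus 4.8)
The plan is to reduce the equidistribution statement to the already-established orthogonality of Fourier coefficients (Theorem \ref{orthogonality-full-spectrum}) by expanding $f$ in an appropriate basis of class functions on $T/W$ and identifying the integrals against $\frac{J_X(\pi)}{\ell(\pi)}d\mu_\aut(\pi)$ with the normalized averages of (local) Fourier coefficients at powers of $p$. First I would recall that the ring of $W$-invariant polynomials on $T$ is spanned by the Schur polynomials $s_\lambda$, equivalently by the characters $\chi_\lambda$ of the irreducible rational representations of $\mathrm{SL}_r(\C)$, indexed by partitions $\lambda$ with at most $r-1$ parts; since $p^{\mu^p(\pi)}$ lies in a bounded region of $T/W$ (by the Jacquet--Shalika bounds toward Ramanujan, the Satake parameters have uniformly bounded real part), any continuous $f$ can be uniformly approximated on the relevant compact set by finite linear combinations of the $\chi_\lambda$. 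So it suffices to prove the convergence for $f = \chi_\lambda$, and by linearity and the trivial case $\lambda = 0$ it suffices to show that for every nonzero $\lambda$ the ratio tends to $\int_{T_0/W}\chi_\lambda\, d\mu_{\mathrm{st}} = 0$ (orthogonality of irreducible characters of $\SU(r)$ against the trivial one).

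Next I would connect $\chi_\lambda(p^{\mu^p(\pi)})$ to Fourier coefficients. The Satake isomorphism identifies the spherical Hecke algebra at $p$ with the representation ring, and under this identification the Hecke eigenvalue of a (partial, suitably normalized) Hecke operator at $p$ associated to $\chi_\lambda$ equals $\chi_\lambda(p^{\mu^p(\pi)})$; on the other hand, the Shintani--Casselman--Shalika formula expresses the Fourier/Whittaker coefficients $\lambda_\pi(p^{a_1},\dots,p^{a_{r-1}})$ at prime powers as Schur polynomials $s_{(a_{r-1}+\cdots,\ldots)}(p^{\mu^p(\pi)})$ in the Satake parameters. Thus a fixed $\chi_\lambda(p^{\mu^p(\pi)})$ is a fixed finite $\mathbb{Z}$-linear combination of the quantities $\lambda_\pi(m)$ for $m$ a tuple of powers of $p$ (with exponents bounded in terms of $\lambda$), with coefficients independent of $\pi$. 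Substituting this finite combination into the ratio, and applying Theorem \ref{orthogonality-full-spectrum} termwise — taking $n = (1,\dots,1)$ and $m$ running over the relevant tuples of $p$-powers, which for $X$ large enough satisfy the admissibility hypothesis $\min(\cdots)\ll X^r$ since the exponents are fixed while $X\to\infty$ — each term $\int_{\hat{\X}_\gen}\lambda_\pi(m)\frac{J_X(\pi)}{\ell(\pi)}\,d\mu_\aut(\pi)$ with $m\neq(1,\dots,1)$ equals $\delta_{m=(1,\dots,1)}$ times the normalizing integral, hence vanishes, while the $m=(1,\dots,1)$ term contributes the coefficient of $\chi_0$ in the expansion of $\chi_\lambda$, which is $0$ for $\lambda\neq 0$. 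Dividing by $\int_{\hat{\X}_\gen}\frac{J_X(\pi)}{\ell(\pi)}d\mu_\aut(\pi)\asymp X^{r-1}>0$ gives that the ratio for $f=\chi_\lambda$ is exactly $0$ for every $X$ in range, a fortiori in the limit.

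Finally I would assemble the pieces: given $\varepsilon>0$, choose a trigonometric-polynomial approximation $g=\sum_\lambda c_\lambda \chi_\lambda$ with $\|f-g\|_\infty<\varepsilon$ uniformly on the compact region containing all $p^{\mu^p(\pi)}$ and all of $T_0/W$; the contribution of $f-g$ to both the automorphic ratio and to $\int_{T_0/W} f\,d\mu_{\mathrm{st}}$ is $O(\varepsilon)$ using the positivity of $\frac{J_X(\pi)}{\ell(\pi)}$ and the normalization, while the contribution of $g$ matches on the nose by the previous paragraph. Letting $X\to\infty$ and then $\varepsilon\to 0$ yields the claim. The main obstacle I anticipate is the uniform approximation step — specifically, ensuring that the Satake parameters $p^{\mu^p(\pi)}$ for $\pi$ in the support of $J_X$ stay in a fixed compact subset of $T/W$ independent of $X$, so that a single trigonometric polynomial approximates $f$ well for all $X$; this requires the Jacquet--Shalika-type bound applied uniformly over the isobaric classes in $\hat{\X}_\gen$ (including the continuous spectrum, where the bound must be tracked through the isobaric decomposition), together with care that the weight $\frac{J_X(\pi)}{\ell(\pi)}$ is genuinely supported where this bound is available. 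Everything else is a formal consequence of Theorem \ref{orthogonality-full-spectrum} and the Casselman--Shalika formula.
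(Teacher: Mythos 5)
Your overall strategy --- Casselman--Shalika to convert characters into Fourier coefficients, Theorem \ref{orthogonality-full-spectrum} to evaluate the averages, Schur orthogonality to identify the limit, Stone--Weierstrass to pass to general $f$ --- is the same as the paper's, but there is a genuine gap in your approximation step. You propose to approximate a general continuous $f$ uniformly on the compact set containing the Satake parameters by finite linear combinations of the characters $\chi_\lambda$ alone, and consequently you only ever invoke the ``first moment'' case of Theorem \ref{orthogonality-full-spectrum} (i.e.\ $n=(1,\dots,1)$). This fails: since the Ramanujan conjecture is open, the parameters $p^{\mu^p(\pi)}$ live in $T_1/W$, which strictly contains the unitary locus $T_0/W$, and off $T_0/W$ the linear span of $\{\chi_\lambda\}$ is \emph{not} closed under complex conjugation ($\overline{\chi_\lambda(z)}=\chi_{\lambda^*}(z)$ only for unitary $z$; in general it is a polynomial in $\bar z$, not in $z$). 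The complex Stone--Weierstrass theorem requires a self-adjoint algebra, and indeed uniform limits of polynomials in $z$ alone on a set with interior are holomorphic there, so even $f(z)=\bar z_1$ cannot be reached. Your own ``anticipated obstacle'' (compactness of the region of Satake parameters) is the harmless part; the self-adjointness is the real issue.

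The fix is exactly what the paper does: work in the coordinates $z_j=e_j(p^{\mu^p(\pi)})=\lambda_\pi(p,\dots,p,1,\dots,1)$ and approximate $f$ by polynomials in \emph{both} $z$ and $\bar z$, reducing to monomials $z^\alpha\bar z^\beta$. Linearizing via the tensor-product decomposition of highest-weight characters (equation \eqref{linearize-hecke-eigenvalue}) turns such a monomial into a finite combination of products $\lambda_\pi(p^{\iota(\alpha')})\overline{\lambda_\pi(p^{\iota(\beta')})}$, and now the full bilinear statement of Theorem \ref{orthogonality-full-spectrum} gives the limit $\delta_{\iota(\alpha')=\iota(\beta')}$, which matches $\int_{T_0/W}\chi_{\alpha'}\overline{\chi_{\beta'}}\,d\mu_{\mathrm{st}}$ by Schur orthogonality. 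So the bilinear form of the orthogonality relation is not an optional convenience but is forced on you by the failure of Ramanujan; with that modification your argument closes up and coincides with the paper's proof.
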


A variant of Theorem \ref{sato-tate} in the Laplace eigenvalue aspect assuming the conjecture in \eqref{conjecture-orthogonality} is proved in \cite{Zh} for general $r$. Unconditionally, the same variant is proved in \cite[Theorem 10.2]{KL} for $r=2$ and in \cite[Theorem 3]{BBR} for $r=3$. In \cite[Theorem 1.4]{MT}, and in \cite{ST} the authors proved Sato--Tate equidistribution over a cuspidal spectrum without the harmonic weights $\ell(\pi)$ for $\PGL(r)$ and for general reductive group, respectively. In particular, both \cite{MT,ST} use the Arthur--Selberg trace formula while we use the Kuznetsov trace formula. We also mention that Theorem \ref{sato-tate} can be formulated over a cuspidal spectrum following a similar technique as in Theorem \ref{orthogonality-cuspidal-spectrum} (see Remark \ref{project-on-cusp}).

\subsection{Density estimates}
We denote $\theta^p(\pi):=\max_i |\Re(\mu^p_i(\pi))|$. We call $\pi$ to be \emph{tempered} if it satisfies the Ramanujan conjecture at $p$ i.e., $\theta^p(\pi)=0$. Otherwise, if $\theta^p(\pi)=\theta$, we call $\pi$ to be \emph{$\theta$--non-tempered} at $p$. It is known from \cite{JS} that $p^{\mu^p(\pi)}\in T_1/W$ for cuspidal $\pi$ in $\hat{\X}_\gen$, where $T_0\subset T_1\subset T$ is defined by the subset in $T$ containing $p^{\mu^p(\pi)}$ with $\theta^p(\pi)\le1/2$ (Luo--Rudnick--Sarnak improved this bound to $1/2-1/(1+r^2)$, also see \cite{MS}).

In \cite[p.465]{Sa}, Sarnak conjectured that for a nice enough finite family $\mathcal{F}$ of unitary irreducible automorphic representations for $\GL_r(\Z)$ the number of representations which are at least $\theta$--non-tempered at a fixed place, is essentially of size $|\mathcal{F}|^{1-\frac{2\theta}{r-1}}$. We refer to \cite{B3} for motivation and details. In Theorem \ref{density} we prove Sarnak's conjecture in \cite{Sa} in the analytic conductor aspect for a finite place $p$ for $\PGL_r(\Z)$.

\begin{theorem}\label{density}
Let $p$ be a fixed prime. Then
$$\frac{1}{X^{r-1}}|\{\pi\,\,\mathrm{cuspidal}, C(\pi)<X\mid \theta^p(\pi)> \eta\}|\ll_\epsilon X^{-2\eta+\epsilon},$$
as $X$ tends off to infinity.
\end{theorem}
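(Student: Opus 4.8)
The plan is to deduce the estimate from the positivity of the approximate projector of Theorem~\ref{orthogonality-full-spectrum} together with the classical principle that a non-tempered Satake parameter at a fixed prime $p$ is \emph{detected} by the size of the Hecke eigenvalues at powers of $p$; this is the mechanism of \cite{Sa, B3, MT}, transported here to the analytic conductor aspect. Write $N(\eta,X):=|\{\pi\ \mathrm{cuspidal},\ C(\pi)<X\mid\theta^p(\pi)>\eta\}|$. Each $\pi$ counted by $N(\eta,X)$ is unramified at every finite place, so $\ell(\pi)$ is an absolute constant, and the first bullet of Theorem~\ref{orthogonality-full-spectrum} gives $J_X(\pi)/\ell(\pi)\gg 1$ for all of them.

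First I would record the spectral input. For $\mathbf{n}=(p^j,1,\dots,1)\in\N^{r-1}$ the side condition of Theorem~\ref{orthogonality-full-spectrum} with $m=n=\mathbf{n}$ reads $p^{jr}\ll X^r$, i.e.\ $p^j\le cX$ for a fixed small $c>0$. For such $j$, taking $m=n=\mathbf{n}$ in Theorem~\ref{orthogonality-full-spectrum} (so $\delta_{m=n}=1$) and using its second bullet,
\[
\int_{\hat{\X}_\gen}|\lambda_\pi(p^j,1,\dots,1)|^2\,\frac{J_X(\pi)}{\ell(\pi)}\,d\mu_\aut(\pi)=\int_{\hat{\X}_\gen}\frac{J_X(\pi)}{\ell(\pi)}\,d\mu_\aut(\pi)\asymp X^{r-1}.
\]
The integrand on the left is non-negative and the cuspidal spectrum is discrete with positive $\mu_\aut$-mass at each of its points, so discarding all of $\hat{\X}_\gen$ except the cuspidal $\pi$ with $C(\pi)<X$ only decreases the left side; hence for every $j$ with $p^j\le cX$,
\[
\sum_{\substack{\pi\ \mathrm{cuspidal}\\ C(\pi)<X}}|\lambda_\pi(p^j,1,\dots,1)|^2\,\frac{J_X(\pi)}{\ell(\pi)}\ \ll\ X^{r-1}.
\]

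For the detection step I would set $J:=\lfloor\log_p(cX)\rfloor-C_r$ with $C_r$ depending only on $r$, so that $p^j\le cX$ throughout the window $J-C_r\le j\le J$ while $p^{\eta J}\asymp_{r,p}X^{\eta}$, and use: if $\pi$ is cuspidal with $\theta^p(\pi)>\eta$ then $\max_{J-C_r\le j\le J}|\lambda_\pi(p^j,1,\dots,1)|\gg_r p^{\eta J}\gg X^{\eta}$. Indeed $\lambda_\pi(p^j,1,\dots,1)=h_j(\alpha)$ is the $j$-th complete homogeneous symmetric function of the Satake parameters $\alpha=(\alpha_1,\dots,\alpha_r)$ at $p$, and the generating identity $\sum_{j\ge 0}h_j(\alpha)t^j=\prod_i(1-\alpha_it)^{-1}$, with radius of convergence $p^{-\theta^p(\pi)}<p^{-\eta}$, forces the growth $|h_j(\alpha)|\gg_r p^{\theta^p(\pi)j}$ to hold for at least one $j$ in every $C_r$ consecutive integers; I would take the precise lemma from \cite{Sa, B3, MT}. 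Then, bounding $N(\eta,X)X^{2\eta}$ by the sum of $\big(\max_{J-C_r\le j\le J}|\lambda_\pi(p^j,1,\dots,1)|\big)^2 J_X(\pi)/\ell(\pi)$ over the cuspidal $\pi$ with $C(\pi)<X$ and $\theta^p(\pi)>\eta$, replacing the maximum by the sum over the window and extending the $\pi$-sum to all cuspidal $\pi$ with $C(\pi)<X$ (the terms being non-negative), the previous display bounds this by $\ll_r X^{r-1}$, whence $N(\eta,X)\ll_{r,p}X^{r-1-2\eta}$.

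I expect the detection lemma, precisely its uniformity, to be the main obstacle: the Satake parameters at $p$ may cluster, so $h_j(\alpha)$ can nearly cancel over long stretches of $j$, and one must guarantee both that genuine growth is reached already at $j\asymp\log_p X$ and that the constant in $|h_j(\alpha)|\gg_r p^{\theta^p(\pi)j}$ does not degrade as the parameters cluster — the qualitative asymptotic $h_j(\alpha)\sim c_\pi p^{\theta^p(\pi)j}$ does not suffice, one must track partial-fraction coefficients against the spacing of the $\alpha_i$. The $X^{\epsilon}$ in the stated theorem is a safety margin for this step: it absorbs a weaker detection lemma that sums $|\lambda_\pi(p^j,1,\dots,1)|^2$ over a window of $j$ of length $O(\log X)$, or $|\lambda_\pi(\mathbf{n})|^2$ over the $p$-part of the Hecke algebra up to norm $p^J$ (which has $\ll(\log X)^{r-1}$ terms, each $\ll X^{r-1}$ by Theorem~\ref{orthogonality-full-spectrum}); in every such variant the extra loss is $X^{o(1)}$. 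Everything else — the reduction to the cuspidal spectrum, $\ell(\pi)\asymp 1$, and the bookkeeping — is routine granted Theorem~\ref{orthogonality-full-spectrum}.
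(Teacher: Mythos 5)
Your route is the paper's route: detect $\theta^p(\pi)>\eta$ via the size of $|\lambda_\pi(p^j,1,\dots,1)|$ over a window of $O_r(1)$ consecutive exponents $j$ with $p^j\asymp X$, then bound the resulting non-negative spectral sum by Theorem \ref{orthogonality-full-spectrum} with $m=n=(p^j,1,\dots,1)$, using positivity to drop everything but the relevant cuspidal $\pi$. The detection step you worry about is not an obstacle: the paper quotes \cite[Lemma 4]{B2}, which gives $\sum_{j=0}^{r-1}|\lambda_\pi(p^{n-j},1,\dots,1)|^2\ge (2p^{\theta^p(\pi)})^{2(1-r)}p^{2n\theta^p(\pi)}$ with a constant that is uniformly bounded below (since $\theta^p(\pi)\le 1/2$), so no $X^{o(1)}$ is lost there and no clustering analysis is needed.

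There is, however, one genuine error in your write-up: the assertion that $\ell(\pi)$ is an absolute constant because $\pi$ is everywhere unramified. By \eqref{defn-ell-pi} and the discussion following it, $\ell(\pi)\asymp L(1,\pi,\mathrm{Ad})$ for cuspidal $\pi$, which varies with $\pi$; the first bullet of Theorem \ref{orthogonality-full-spectrum} only gives $J_X(\pi)\gg 1$, not $J_X(\pi)/\ell(\pi)\gg 1$. To run your positivity argument you must bound $\ell(\pi)$ from above, and the paper does this with $L(1,\pi,\mathrm{Ad})\ll_\epsilon C(\pi)^\epsilon$ from \cite{Li} (equation \eqref{bound-l-value-at-1}), giving $J_X(\pi)/\ell(\pi)\gg X^{-\epsilon}$. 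This is precisely where the $X^\epsilon$ in the statement comes from — not, as you suggest, from slack in the detection lemma. The fix is a one-line citation, but as written your argument both asserts a false intermediate claim and mislocates the source of the $\epsilon$-loss.
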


Many variants of the density estimate as in Theorem \ref{density} of similar strength are available in the literature. We refer to \cite[Theorem 2]{B1} for $r=3$ in the spectral parameter aspect, \cite[Theorem 1, Theorem 2]{BBR} for $r=3$ in the Laplace eigenvalue aspect, \cite[Theorem 4, Theorem 5]{BBM} for $r=3$ in the level aspect, and more recently, \cite[Theorem 1]{B2} in the level aspect for general $r$. In \cite[Corollary 1.8]{MT} (also see \cite{FM} which discusses this for general reductive group) a density bound is obtained using the Arthur--Selberg trace formula for general $r$, however the bound is weaker than Sarnak's density hypothesis in \cite{Sa}.

\begin{remark}
We mention that \cite[Theorem 1]{B2} proves a stronger estimate in the non-archimedean aspect than the non-archimedean variant of Sarnak's density hypothesis \cite{Sa}. The analogous estimate in our archimedean setting would be $\ll_\epsilon X^{-4\eta+\epsilon}$. To obtain a stronger bound one needs to have an estimate of certain double unipotent orbital integral which arises in the geometric side of the Kuznetsov formula (as in the second term of the RHS of the equation in Proposition \ref{kuznetsov}) similar to what is achieved in the non-archimedean case in \cite[Theorem 3]{B2} (see discussion after Theorem 2 in \cite{B2}). It seems such estimates might be achievable by a delicate stationary phase analysis of the unipotent orbital integrals, which is not in our grasp currently for general $r$ and we hope to come back to this in future.
\end{remark}

\subsection{Statistics of low-lying zeros and symmetry type}
There have been strong evidences so far in support of Hilbert and P\'olya's suggestion that there might be a spectral interpretation of the distribution of the zeros of the Riemann Zeta function in terms of the eigenvalues of the random matrices, e.g. Montgomery's pair correlation of high zeros of Riemann zeta function. Katz and Sarnak \cite{KS} predicted that given a family of $L$-function for a reductive group one can associate a symmetry type (e.g. orthogonal, unitary, or symplectic) to that family, which is given by the associated random matrix ensemble which conjecturally determines the distribution of the low-lying zeros of the $L$-functions in the family. We refer to \cite[\S1.3-\S1.6]{ST} for a detailed overview and motivations.

We start by assuming the \emph{Langlands strong functoriality principle}, as in \cite[Hypothesis 10.1]{ST}. Note that the Langlands $L$-group of $G$ is $\SL_r(\C)$. By $\rho$ we will denote the $L$-homomorphism $\SL_r(\C)\to\GL_d(\C)$ arising from a $d$-dimensional irreducible representation of $\SL_r(\C)$.

\begin{conj}[Langlands Functoriality]\label{conj-langlands}
For every $\pi\in \hat{\X}_\gen$ and every $L$-homomorphism $\rho:\SL_r(\C)\to \GL_d(\C)$ there exists an irreducible automorphic representation $\rho_*\pi$ for $\GL_d(\Z)$, which is called to be the functorial lift of $\pi$ under $\rho$, such that
$$\Lambda(s,\rho_*\pi,\mathrm{Standard})=\Lambda(s,\pi,\rho),$$ 
and for every $p\le\infty$ the $p$-component $(\rho_*\pi)_p=\rho^p_*\pi_p$ where $\rho^p_*$ in the RHS denotes the induced functorial transfer for the local representations at $p$.
\end{conj}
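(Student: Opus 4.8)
\emph{A remark on Conjecture~\ref{conj-langlands}.} We emphasize that Conjecture~\ref{conj-langlands} will not be established here; following \cite[Hypothesis 10.1]{ST} we take it as a working hypothesis, and the application to low-lying zeros below is conditional on it. It is nevertheless worth indicating how one would approach a proof and isolating the essential obstruction. For the standard embedding $\rho=\mathrm{id}$ the assertion is a tautology, and for a handful of $\rho$ in small rank --- the symmetric square on $\GL(2)$ (Gelbart--Jacquet), the symmetric cube and symmetric fourth power on $\GL(2)$, and the exterior square from $\GL(4)$ to $\GL(6)$ (Kim--Shahidi, Kim) --- it is a theorem; for a general irreducible $\rho$ it is precisely Langlands' strong functoriality conjecture, which is open.

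The plan is to run the converse-theorem method of Cogdell and Piatetski-Shapiro in three steps. First, for every cuspidal automorphic representation $\tau$ of $\GL_m(\Z)$ with $m$ in the bounded range permitted by the converse theorem (say $1\le m\le d-1$), one would prove that the completed twisted $L$-function $\Lambda(s,\pi,\rho\otimes\tau)$ --- that is, the Rankin--Selberg $L$-function built from the local parameters of $\pi$ and $\tau$ composed with $\rho$ on the first factor --- is entire, is bounded in vertical strips, and satisfies the expected functional equation exchanging $s$ and $1-s$. Second, one would feed this family of twisted $L$-functions into the converse theorem to obtain an automorphic representation $\rho_*\pi$ of $\GL_d(\Z)$ whose standard $L$-function equals $\Lambda(s,\pi,\rho)$. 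Third, one would match local $L$- and $\varepsilon$-factors at every place, invoking the local Langlands correspondence for $\GL_n$ (Harris--Taylor, Henniart, Scholze) to identify the component $(\rho_*\pi)_p$ with the prescribed local transfer $\rho^p_*\pi_p$, including at the archimedean place, where the transfer is described through the Langlands classification.

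The hard part is the first step. When $\rho\otimes\mathrm{Std}$ occurs in a parabolic constant term of an Eisenstein series on a larger reductive group, the Langlands--Shahidi method supplies exactly the required meromorphic continuation, functional equation, holomorphy, and boundedness in strips --- this is how all the known cases above are obtained. For a general irreducible $\rho\colon\SL_r(\C)\to\GL_d(\C)$ there is neither such an Eisenstein realization nor an Eulerian integral representation, so one is pushed into the ``beyond endoscopy'' program of Langlands, Frenkel--Langlands--Ng\^o, and others, in which the analytic behaviour of $\Lambda(s,\pi,\rho)$ is to be detected from a weighted trace formula with weights built from $\mathrm{tr}\,\rho$ of the local parameters; this remains largely conjectural, and making it unconditional is the principal obstruction. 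A secondary, though still genuine, difficulty --- not fully resolved outside the known cases --- is to check that the purely local constructions $\rho^p_*$ at the ramified and archimedean places are compatible with the global representation returned by the converse theorem, which requires stability of the relevant $\gamma$-factors and a description of the local $L$-packets. For these reasons we content ourselves with assuming Conjecture~\ref{conj-langlands}, exactly as in \cite{ST}.
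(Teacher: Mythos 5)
This statement is a conjecture that the paper assumes as a hypothesis (following \cite[Hypothesis 10.1]{ST}) rather than proves, and you have correctly treated it the same way: you do not claim a proof, you survey the known cases and the obstructions, and you adopt it as a working hypothesis. Your treatment is consistent with the paper's, so there is nothing to correct.
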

To exclude the case $\rho$ being trivial we also assume that $d>1$. It is known that Conjecture \ref{conj-langlands} implies the \emph{generalized Ramanujan conjecture} for $G$.

We define the \emph{average conductor} $C_{\rho,X}$ of the family of automorphic representations $\pi\in \hat{\X}_\gen$ with analytic conductor $C(\pi)<X$ by the equation
\begin{equation*}
    \log C_{\rho,X}{\int_{\hat{\X}_\gen} \frac{J_X(\pi)}{\ell(\pi)}d\mu_\aut(\pi)}:={\int_{\hat{\X}_\gen}\log C(\rho_*\pi) \frac{J_X(\pi)}{\ell(\pi)}d\mu_\aut(\pi)}.
\end{equation*}

We write the zeros of $\Lambda(s,\pi,\rho)$ in the critical strip, i.e. $\Re(s)\in [0,1]$, as $1/2+i\gamma_{\rho_*\pi}$, i.e. under the GRH we have $\gamma_{\rho_*\pi}\in \R$. The \emph{low-lying zeros} of $\Lambda$ are the zeros with $\gamma_{\rho_*\pi}$ bounded by $(\log C_{\rho,X})^{-1}$.

Let $\psi$ be the Fourier transform of a smooth function on $\R$ supported on the interval $[-\delta,\delta]$ for some $\delta>0$. Then $\psi$ is a Paley--Wiener type (or Schwartz class) function. We define a \emph{weighted $1$-level density} statistic $D_{\rho,X}$ for low-lying zeros of $\Lambda(s,\rho_*\pi)$ with $C(\pi)<X$ by the equation
\begin{equation}
    D_{\rho,X}(\psi){\int_{\hat{\X}_\gen} \frac{J_X(\pi)}{\ell(\pi)}d\mu_\aut(\pi)}:={\int_{\hat{\X}_\gen}\sum_{\gamma_{\rho_*\pi}}\psi\left(\gamma_{\rho_*\pi}\frac{\log C_{\rho, X}}{2\pi}\right)\frac{J_X(\pi)}{\ell(\pi)}d\mu_\aut(\pi)}.
\end{equation}
Similarly, we denote the poles of $\Lambda(s,\pi,\rho)$ in the critical strip by $1/2+i\tau_{\rho_*\pi}$ and define 
\begin{equation}
    D^{\mathrm{pole}}_{\rho,X}(\psi){\int_{\hat{\X}_\gen} \frac{J_X(\pi)}{\ell(\pi)}d\mu_\aut(\pi)}:={\int_{\hat{\X}_\gen}\sum_{\tau_{\rho_*\pi}}\psi\left(\tau_{\rho_*\pi}\frac{\log C_{\rho, X}}{2\pi}\right)\frac{J_X(\pi)}{\ell(\pi)}d\mu_\aut(\pi)}.
\end{equation}

From the Katz--Sarnak \cite{KS} random matrix heuristics about the distribution of the zeros of $\Lambda$ one can predict that there is a limiting $1$-level distribution of the low-lying zeros. That is,
$$\lim_{X\to\infty} D_{\rho,X}(\psi)=\int_\R \psi(x)\mathfrak{W}(x)dx,$$
where $\mathfrak{W}$ is determined by the \emph{Frobenius--Schur indicator} $\mathfrak{s}(\rho)$ of the representation $\rho$. In particular, a family is (even) orthogonal, (unitary) symplectic, or unitary if the value of $\mathfrak{s}$ is $-1,1$, or $0$, respectively. Shin--Templier \cite[Theorem 1.5]{ST} showed that the above prediction is true for certain families of automorphic $L$-functions in the level and weight aspects for a general reductive group, assuming some hypotheses on the size of the average conductor and the number of poles of the $L$-function.
In Theorem \ref{low-lying-zero} we prove a weighted version of \cite[Theorem 1.5]{ST} for the group $G$ assuming similar hypotheses as in \cite{ST}, which we describe below.

We make a conjecture about the size of the average conductor as in \cite[Hypothesis 11.4]{ST} that $\log C_{\rho,X}\asymp_\rho \log X$.
\begin{conj}\label{conj-cond-size}
For all $L$-homomorphisms $\rho$ there exist nonnegative constants $\mathfrak{c}(\rho)$ and $\mathfrak{C}(\rho)$ such that,
$$X^{\mathfrak{c}(\rho)}\ll C_{\rho,X}\ll X^{\mathfrak{C}(\rho)},$$
for all large enough $X>1$.
\end{conj}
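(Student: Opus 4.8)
The plan is to reduce Conjecture~\ref{conj-cond-size} --- in fact the sharper expectation $\log C_{\rho,X}\asymp_\rho\log X$ --- to a pointwise comparison of conductors, combined with the weighted count already supplied by Theorem~\ref{orthogonality-full-spectrum}. First, since every $\pi\in\hat\X_\gen$ is unramified at all finite primes, the local compatibility in Conjecture~\ref{conj-langlands} shows that $(\rho_*\pi)_p=\rho^p_*\pi_p$ is unramified at each finite $p$; hence the finite conductor of $\rho_*\pi$ is $1$, and $C(\rho_*\pi)=C_\infty\bigl((\rho_*\pi)_\infty\bigr)$ is purely archimedean. The $L$-parameter of $(\rho_*\pi)_\infty$ is $\rho$ composed with that of $\pi_\infty$, so its Langlands parameters are the values $\langle\lambda,\mu^\infty(\pi)\rangle$ as $\lambda$ ranges over the weights of $\rho$ (with multiplicity), where $\mu^\infty(\pi)=(\mu_1^\infty(\pi),\dots,\mu_r^\infty(\pi))$ are the archimedean Langlands parameters of $\pi$. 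Consequently
$$\log C(\rho_*\pi)\;\asymp\;\sum_{\lambda\in\mathrm{wt}(\rho)}\log\bigl(2+|\langle\lambda,\mu^\infty(\pi)\rangle|\bigr),\qquad \log C(\pi)\;\asymp\;\sum_{i=1}^{r}\log\bigl(2+|\mu^\infty_i(\pi)|\bigr).$$

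Because $\rho$ is nontrivial and irreducible, the $\Z$-span of its weights has full rank $r-1$, so $\mu\mapsto\max_{\lambda\in\mathrm{wt}(\rho)}|\langle\lambda,\mu\rangle|$ is a norm on the complexified Cartan $\{\mu\in\C^r:\sum_i\mu_i=0\}$, equivalent with $\rho$-dependent constants to $\max_i|\mu_i|$. Inserting this into the two asymptotics above --- and using the bound $\theta^\infty(\pi)\le1/2$, so that once $C(\pi)$ is large its size is carried by the imaginary parts of $\mu^\infty(\pi)$ --- gives $\log C(\rho_*\pi)\asymp_\rho\log C(\pi)$ for all $\pi\in\hat\X_\gen$, both sides being $O_\rho(1)$ for bounded $C(\pi)$. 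Multiplying by $\tfrac{J_X(\pi)}{\ell(\pi)}$, integrating over $\hat\X_\gen$ against $d\mu_\aut$, and dividing by the total mass $\asymp X^{r-1}$ from Theorem~\ref{orthogonality-full-spectrum}, it suffices to prove
$$X^{r-1}\log X\;\ll_\rho\;\int_{\hat\X_\gen}\log C(\pi)\,\frac{J_X(\pi)}{\ell(\pi)}\,d\mu_\aut(\pi)\;\ll_\rho\;X^{r-1}\log X.$$
The lower bound follows from the Weyl law in the conductor aspect together with the proof of Theorem~\ref{orthogonality-full-spectrum}: the contribution of cuspidal $\pi$ with $C(\pi)\in[\sqrt X,X]$ to $\int\tfrac{J_X}{\ell}\,d\mu_\aut$ is already $\gg X^{r-1}$, and on that range $\log C(\pi)\ge\tfrac12\log X$, whence the displayed integral is $\gg X^{r-1}\log X$.

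The main obstacle is the upper bound: one must show that representations whose conductor far exceeds $X$ do not dominate the $J_X$-weighted average, i.e. one needs a tail bound of the form $\int_{C(\pi)>X}\log C(\pi)\,\tfrac{J_X(\pi)}{\ell(\pi)}\,d\mu_\aut(\pi)\ll_\rho X^{r-1}\log X$; granting this, $\int_{C(\pi)\le X}\log C(\pi)\,\tfrac{J_X}{\ell}\,d\mu_\aut\le(\log X)\int\tfrac{J_X}{\ell}\,d\mu_\aut\asymp X^{r-1}\log X$ completes the argument. This is exactly the decay of $J_X$ at large conductor left unproven in the text. It should follow from the explicit construction in \cite{JN}: the archimedean part of the test function producing $J_X$ is a squared matrix coefficient of an analytic newvector, whose spherical transform is of Paley--Wiener type --- rapidly decaying on the tempered spectrum and, by $\theta^\infty(\pi)\le1/2$, only polynomially large off it --- so $J_X$ should be negligible once $C(\pi)\gg X^{1+\epsilon}$, the finitely many genuinely non-tempered $\pi$ being controlled by the density estimate of Theorem~\ref{density}. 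Making this quantitative enough to beat the $X^{r-1}$ Weyl growth is the crux and the reason for the conjectural phrasing; once available, the two-sided bound yields $\log C_{\rho,X}\asymp_\rho\log X$, and Conjecture~\ref{conj-cond-size} holds with any $\mathfrak{c}(\rho),\mathfrak{C}(\rho)$ bracketing the resulting ratio.
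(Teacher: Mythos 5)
The statement you are proving is labelled a \emph{conjecture} in the paper: it is an assumption of Theorem \ref{low-lying-zero}, not a result, and the paper offers only the heuristic in the paragraph following its statement (an asymptotic expansion of $J_X(\pi)$ in ranges of $C(\pi)$ plus the local Langlands description of the archimedean parameters of $\rho_*\pi$). Your proposal is essentially a fleshed-out version of that same heuristic, and your first two steps are sound: local functoriality at $\infty$ together with the Weyl-orbit/full-rank argument for the weights of a nontrivial irreducible $\rho$ does give $\log C(\rho_*\pi)\asymp_\rho\log C(\pi)$, and the lower bound $\int\log C(\pi)\,\frac{J_X(\pi)}{\ell(\pi)}\,d\mu_\aut\gg X^{r-1-\epsilon}\log X$ follows from Lemma \ref{spectral-weight-property}, the cuspidal count $\asymp X^{r-1}$, and the bound \eqref{bound-l-value-at-1} on $L(1,\pi,\Ad)$ (note you silently need this last input to keep the harmonic weights from collapsing).

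But the proposal is not a proof, and you correctly identify where it fails: the upper bound requires $\int_{C(\pi)>X}\log C(\pi)\,\frac{J_X(\pi)}{\ell(\pi)}\,d\mu_\aut\ll X^{r-1}\log X$, i.e.\ quantitative decay of $J_X(\pi)$ once $C(\pi)\gg X^{1+\epsilon}$, and the paper states explicitly (in the remark after Theorem \ref{orthogonality-full-spectrum} and in \S\ref{analytic-newvector}) that this decay is expected but not established. Your suggested route to it is also not quite right as stated: $J_X$ is a Bessel distribution $\sum_W|\pi(f_X)W(1)|^2$ attached to a bump function on the approximate congruence subgroup $K_0(X,\tau)$, which is not bi-$K$-invariant, so there is no ``spherical transform of Paley--Wiener type'' to invoke; controlling $J_X(\pi)$ for large $C(\pi)$ requires bounding $\pi(f_X)W(1)$ uniformly over an orthonormal basis of the Whittaker model, which is precisely the open analytic problem. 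So the honest conclusion is that your argument reduces Conjecture \ref{conj-cond-size} to the (unproven) approximate-projector property of $J_X$, exactly as the paper anticipates, rather than proving it.
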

Conjecture \ref{conj-cond-size} should probably not be hard to establish upon an asymptotic expansion of $J_X(\pi)$ for various ranges of $C(\pi)$ (see \S\ref{analytic-newvector}) and using local Langlands correspondence for $G$ to describe the archimedean $L$-parameters of $\rho_*\pi$. Heuristically, at least when $\rho$ is the standard representation, we expect
$${\int_{\hat{\X}_\gen}\log C(\pi) \frac{J_X(\pi)}{\ell(\pi)}d\mu_\aut(\pi)} \approx \log X\int_{C(\pi)\asymp X} \frac{J_X(\pi)}{\ell(\pi)}d\mu_\aut(\pi)\asymp X^{r-1}\log X.$$
For general $d$-dimensional representation $\rho$, it is not difficult to prove a trivial bound $C(\rho_*\pi)\le C(\pi)^d$ (however this bound is far from being sharp when $\rho_*\pi$ has a conductor drop). Correspondingly, a crude bound of $\mathfrak{C}(\rho)$ may be obtained as discussed in the heuristic above.

We assume Langlands functoriality in Conjecture \ref{conj-langlands} and Conjecture \ref{conj-cond-size} about the size of the average conductor and state the following weighted version of the density estimate of the low-lying zeros of $\Lambda(s,\pi,\rho)$ over the family $\pi\in\hat{\X}_\gen$ with $C(\pi)<X$.

\begin{theorem}\label{low-lying-zero}
We assume Conjecture \ref{conj-langlands} and Conjecture \ref{conj-cond-size}. Let $\rho$ be a $d$-dimensional non-trivial irreducible representation of $^{L}{G}=\SL_r(\C)$ with the highest weight $\theta:=(\theta_1,\dots,\theta_r)$ which is a dominant element in $\Z^r/\Z(1,\dots,1)$. Recall $\mathfrak{s}(\rho)$ and $\mathfrak{C}(\rho)$ from the above discussion. Let $\psi$ be a Schwartz class function with its Fourier transform $\hat{\psi}$ supported on $[-\delta,\delta]$. Then
$$D_{\rho,X}(\psi)-D^{\mathrm{pole}}_{\rho,X}(\psi)=\hat{\psi}(0)-\frac{\mathfrak{s}(\rho)}{2}\psi(0)+O\left(\frac{1}{\log X}\right),$$
for all $\delta<\frac{1}{\mathfrak{C}(\rho)(\theta_1-\theta_r)}$.
\end{theorem}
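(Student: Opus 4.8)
The plan is to follow the Katz--Sarnak recipe via the explicit formula, using the approximate projector $J_X/\ell$ in place of the usual sharp cut-off over the family. First I would apply the Weil--Riemann explicit formula to $\Lambda(s,\pi,\rho)=\Lambda(s,\rho_*\pi,\mathrm{Standard})$: for each $\pi$, the sum over zeros $\gamma_{\rho_*\pi}$ (and, separately, poles $\tau_{\rho_*\pi}$) of the test function $\psi(\gamma \log C_{\rho,X}/(2\pi))$ equals a main term coming from $\hat\psi(0)$ times the archimedean and ``logarithm of conductor'' contributions, minus a sum over prime powers $p^k$ of the Dirichlet coefficients $\Lambda_{\rho_*\pi}(p^k)$ weighted by $\hat\psi(k\log p/\log C_{\rho,X})$. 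Because $\rho_*\pi$ has standard $L$-function with local parameters $\rho^p_*(\mu^p(\pi))$, these Dirichlet coefficients are $p^{-k/2}\operatorname{tr}\rho(p^{k\mu^p(\pi)})$, i.e. class functions of the Satake parameters. I would then integrate the explicit formula against $J_X(\pi)/\ell(\pi)\,d\mu_\aut(\pi)$ and divide by the total mass $\asymp X^{r-1}$.

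The main term $\hat\psi(0)$ survives after normalisation by design of $\log C_{\rho,X}$ (its defining equation is exactly what makes the conductor term average to $\hat\psi(0)$, up to the archimedean Gamma-factor contribution which is $O(1/\log X)$ after division by $\log C_{\rho,X}\asymp\log X$ — here Conjecture \ref{conj-cond-size} is used to guarantee $\log C_{\rho,X}\gg\log X$, so the error terms genuinely decay). The heart of the matter is the prime-power sum
\[
\frac{-2}{\log C_{\rho,X}}\sum_{p,k}\frac{\log p}{p^{k/2}}\hat\psi\!\left(\frac{k\log p}{\log C_{\rho,X}}\right)\frac{1}{M_X}\int_{\hat{\X}_\gen}\operatorname{tr}\rho\big(p^{k\mu^p(\pi)}\big)\frac{J_X(\pi)}{\ell(\pi)}\,d\mu_\aut(\pi),
\]
where $M_X$ is the total mass. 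By the support condition $\hat\psi\subset[-\delta,\delta]$ and $\log C_{\rho,X}\le \mathfrak{C}(\rho)\log X$, only $p^k\le X^{\mathfrak{C}(\rho)\delta}$ contribute. For each such fixed $p^k$, I want to evaluate $M_X^{-1}\int \operatorname{tr}\rho(p^{k\mu^p(\pi)})\,J_X(\pi)/\ell(\pi)\,d\mu_\aut(\pi)$. Here I would invoke Theorem \ref{sato-tate}, applied to the continuous class function $f=\operatorname{tr}\rho\circ(\cdot)^k$ on $T/W$ (or, more precisely, the effective/quantitative version of it that the Kuznetsov-based proof yields with a power-saving error in $X$ for fixed $p^k$): this integral tends to $\int_{T_0/W}\operatorname{tr}\rho(z^k)\,d\mu_{\mathrm{st}}(z)$, which by Peter--Weyl / Weyl integration is $0$ unless $\rho$ contains the trivial representation in $\operatorname{Sym}$-type decompositions — concretely it equals the multiplicity statistic governed by the Frobenius--Schur indicator: $\int_{\mathrm{SU}(r)}\operatorname{tr}\rho(g^k)\,dg$ is $0$ for $k$ odd-ish and contributes $\mathfrak{s}(\rho)$-dependent terms for $k=2$. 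Summing the resulting geometric-type series $\sum_p \frac{\log p}{p}\hat\psi(2\log p/\log C_{\rho,X})$ and using Mertens/prime number theorem gives $\tfrac12\int_0^\delta\hat\psi$ type contributions, which after the standard manipulation (the $k=2$ term, doubled, integrated) produces exactly $-\tfrac{\mathfrak{s}(\rho)}{2}\psi(0)$, with all $k\ge 3$ terms and the tail contributing $O(1/\log X)$.

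The key constraint $\delta<\frac{1}{\mathfrak{C}(\rho)(\theta_1-\theta_r)}$ enters precisely in controlling the size of $\operatorname{tr}\rho(p^{k\mu^p(\pi)})$ on the part of the spectrum where Ramanujan is not yet known: for $\pi$ in $\hat\X_\gen$ one only has $\theta^p(\pi)\le 1/2$ (Jacquet--Shalika), so $|\operatorname{tr}\rho(p^{k\mu^p(\pi)})|\ll p^{k(\theta_1-\theta_r)\theta^p(\pi)}\le p^{k(\theta_1-\theta_r)/2}$ (the weight $\theta_1-\theta_r$ is the relevant width of $\rho$), and the continuous spectrum / non-tempered cuspidal part must not overwhelm the $p^{-k/2}$ decay; combined with $p^k\le X^{\mathfrak{C}(\rho)\delta}$ and the density estimate Theorem \ref{density} to bound the non-tempered contribution, one needs $\mathfrak{C}(\rho)\delta(\theta_1-\theta_r)<1$. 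I expect the main obstacle to be making the passage from the qualitative Theorem \ref{sato-tate} to an \emph{effective} statement — a power-saving error $O(X^{-c})$ for each fixed $p^k$ with explicit (polynomial in $p^k$) dependence of the implied constant — since summing over $p^k\le X^{\mathfrak{C}(\rho)\delta}$ requires that the total error still be $o(1)$; this forces one to reopen the Kuznetsov trace formula computation behind Theorem \ref{sato-tate} rather than cite it as a black box, tracking the geometric-side (Kloosterman and unipotent orbital integral) estimates uniformly in the Hecke eigenvalue parameter $p^k$ and in $X$. The remaining steps — justifying interchange of the $\pi$-integral with the prime-power sum (absolute convergence from the $\ell(\pi)^{-1}$ normalisation and the crude bound $C(\rho_*\pi)\le C(\pi)^d$), handling poles via $D^{\mathrm{pole}}_{\rho,X}$ exactly as the pole terms in the explicit formula, and the Mertens-type asymptotics — are routine.
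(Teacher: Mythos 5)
Your skeleton (explicit formula for $\Lambda(s,\pi,\rho)$, integration against $J_X(\pi)/\ell(\pi)\,d\mu_\aut$, character theory of $\SU(r)$ for the prime sums, and a prime number theorem computation for the $k=2$ diagonal) matches the paper's. But the step you single out as ``the main obstacle'' --- upgrading Theorem \ref{sato-tate} to an effective equidistribution statement with a power-saving error, uniform in $p^k$, by reopening the Kuznetsov computation --- is not how the paper proceeds, and leaving it open is a genuine gap in your argument. The paper instead expands $\beta_k^p(\rho_*\pi)=\mathrm{tr}\,\rho(p^{k\mu^p(\pi)})$ into a \emph{finite} linear combination $c_0(k,\theta)+\sum_{0<\sum\alpha_i\le k(\theta_1-\theta_r)}c_\alpha(k,\theta)\lambda_\pi(p^{\iota(\alpha)})$ (Lemma \ref{moments-satake-parameter}) and then applies Theorem \ref{orthogonality-full-spectrum} with $m=(1,\dots,1)$, $n=p^\alpha$: the off-diagonal spectral average $\int\lambda_\pi(p^\alpha)\frac{J_X(\pi)}{\ell(\pi)}d\mu_\aut(\pi)$ is \emph{exactly zero} (the Kloosterman terms on the geometric side vanish identically by Lemma \ref{error-term-vanish}), not merely $o(1)$, whenever $p^{k(\theta_1-\theta_r)}$ is $\ll X$ with a small constant. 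Hence nothing accumulates over the $p^k\le C_{\rho,X}^{\delta}$ in the support of $\hat\psi$, and no effective Sato--Tate input is needed. This also corrects your account of where the constraint $\delta<\frac{1}{\mathfrak{C}(\rho)(\theta_1-\theta_r)}$ enters: it is precisely the condition guaranteeing $p^{k(\theta_1-\theta_r)}\le C_{\rho,X}^{\delta(\theta_1-\theta_r)}\ll X^{\delta\mathfrak{C}(\rho)(\theta_1-\theta_r)}=o(X)$, i.e.\ that every Fourier-coefficient index produced by the linearization stays in the range of validity of the orthogonality relation --- not a bound on the non-tempered spectrum via Jacquet--Shalika and Theorem \ref{density}.

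Relatedly, your treatment of the $k$-sum is off. Since Conjecture \ref{conj-langlands} is a standing hypothesis, the generalized Ramanujan conjecture holds and $|p^{\mu_j^p(\rho_*\pi)}|=1$; this is what the paper uses to truncate at $k\le 2$ with an $O(1/\log X)$ tail. Your proposed substitute bound $|\mathrm{tr}\,\rho(p^{k\mu^p(\pi)})|\ll p^{k(\theta_1-\theta_r)/2}$ from $\theta^p(\pi)\le 1/2$ would, against the $p^{-k/2}$ decay, fail to make the $k\ge 3$ sum converge as soon as $\theta_1-\theta_r\ge 1$, so the density-theorem route you sketch does not close the tail either. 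Once these two points are repaired --- exact orthogonality in place of effective Sato--Tate, and Ramanujan for the truncation --- the rest of your outline (the normalization of the conductor term by the defining equation of $C_{\rho,X}$, the Stirling estimate for the archimedean term, and the Mertens/PNT evaluation of $\sum_p\frac{\log p}{p}\hat\psi(2\log p/\log C_{\rho,X})$ yielding $\frac{\mathfrak{s}(\rho)}{2}\psi(0)$) agrees with the paper.
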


\begin{remark}
It should be noted that that $D^{\mathrm{pole}}$ term in Theorem \ref{low-lying-zero} typically should be negligible. In \cite{ST} it is shown that the corresponding $D^{\mathrm{pole}}$ in their analysis is negligible upon assuming \cite[Hypothesis 11.2]{ST} which heuristically predicts that $\Lambda(s,\rho_*\pi)$ is entire for \emph{for most} $\pi$. We refer to Remark \ref{rmk-d-pole} for details.
\end{remark}

\begin{remark}
We use Ramanujan Conjecture in full strength (which is implied by Langlands functoriality principle in Conjecture \ref{conj-langlands}) in the proof. However, we only need a bound towards Ramanujan. Thus alternatively, one can assume functoriality for fewer $\rho$ and prove a bound towards Ramanujan and consequently, to prove a stronger version of Theorem \ref{low-lying-zero}.
\end{remark}

Apart form \cite{ST}, there are a few results on the distribution of the low-lying zeros for various families of $L$-functions e.g., for $r=2$ \cite{ILS,Y} in the weight and level aspect, for $r=3$ \cite{GK} in the Laplace eigenvalue aspect, and for general $r$ \cite{MT} in the dilated Plancherel ball aspect.

\subsection{A large sieve inequality}
In the next application we prove a large sieve inequality in the analytic conductor aspect for Fourier coefficients of automorphic forms on $\PGL(r)$. This result is in the spirit of celebrated large sieve inequalities in \cite{DI} in the spectral parameter aspect for $r=2$.

\begin{theorem}\label{large-sieve}
Let all the notations be as in Theorem \ref{orthogonality-full-spectrum}. Let $\alpha(n)_{n\in\N^{r-1}}$ be any sequence of complex numbers and $N:=N(n):=n_1\dots n_{r-1}$. Then 
$$\sum_{\substack{{C(\pi)<X}\\{\pi\,\,\mathrm{cuspidal}}}}{L(1,\pi,\Ad)^{-1}}\left|\sum_{N\ll X}\alpha(n)\lambda_\pi(n)\right|^2\ll X^{r-1}\sum_{N\ll X}\left|\alpha(n)\right|^2.$$
Here the implicit constant in the condition $N\ll X$ is assumed to be sufficiently small.
\end{theorem}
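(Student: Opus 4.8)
The plan is to deduce Theorem~\ref{large-sieve} from the orthogonality-type input provided by Theorem~\ref{orthogonality-full-spectrum} by a standard duality (i.e.\ ``almost orthogonality'') argument. First I would expand the square on the left-hand side, obtaining a diagonal sum
$$\sum_{\substack{C(\pi)<X\\ \pi\text{ cuspidal}}} L(1,\pi,\Ad)^{-1}\sum_{N(m),N(n)\ll X}\alpha(m)\overline{\alpha(n)}\,\lambda_\pi(n)\overline{\lambda_\pi(m)},$$
and interchange the order of summation so that the spectral sum sits on the inside. The key point is that, up to the harmonic weight $L(1,\pi,\Ad)^{-1}$ versus $J_X(\pi)/\ell(\pi)$ discrepancy, the inner spectral sum over $\pi$ with $C(\pi)<X$ is controlled by the amplified/smoothed spectral average appearing in Theorem~\ref{orthogonality-full-spectrum}: since $J_X(\pi)\gg 1$ on cuspidal $\pi$ with $C(\pi)<X$ and $J_X$ is non-negative on the whole generic spectrum, one has, for each fixed pair $(m,n)$ with $N(m),N(n)\ll X$ and the product condition of Theorem~\ref{orthogonality-full-spectrum} satisfied,
$$\sum_{\substack{C(\pi)<X\\ \pi\text{ cuspidal}}} \frac{\overline{\lambda_\pi(m)}\lambda_\pi(n)}{L(1,\pi,\Ad)} \ \ll\ \int_{\hat\X_\gen}\overline{\lambda_\pi(m)}\lambda_\pi(n)\frac{J_X(\pi)}{\ell(\pi)}\,d\mu_\aut(\pi) + (\text{error}),$$
where one needs to check that restricting the continuous-spectrum and large-conductor contributions of the right-hand side can only help (they are non-negative in the diagonal case and must be absorbed in the off-diagonal case). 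By Theorem~\ref{orthogonality-full-spectrum} the main term on the right is $\delta_{m=n}\cdot O(X^{r-1})$, so the diagonal terms $m=n$ contribute $\ll X^{r-1}\sum_{N\ll X}|\alpha(n)|^2$, which is exactly the desired bound.

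The second step is to handle the off-diagonal pairs $m\neq n$. Here I would like to argue that for $m\neq n$ with $N(m),N(n)$ sufficiently small relative to $X$ the pair $(m,n)$ still satisfies the product hypothesis of Theorem~\ref{orthogonality-full-spectrum}, so the corresponding smoothed spectral average vanishes identically. Then, after inserting $J_X$ to pass from the cuspidal sum to the full generic spectral integral, the off-diagonal contribution is controlled by whatever positive ``slack'' is introduced in the inequality above — concretely, one writes the cuspidal sum as the full $J_X/\ell$-integral minus the contributions of continuous spectrum and of cuspidal $\pi$ with $C(\pi)\ge X$, bounds $|\lambda_\pi(m)\lambda_\pi(n)|\le \tfrac12(|\lambda_\pi(m)|^2+|\lambda_\pi(n)|^2)$ to symmetrize, and reduces everything to diagonal estimates already handled. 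The normalization $\int_{\hat\X_\gen} J_X(\pi)/\ell(\pi)\,d\mu_\aut(\pi)\asymp X^{r-1}$ from Theorem~\ref{orthogonality-full-spectrum} is what keeps all these auxiliary terms of size $X^{r-1}\sum|\alpha(n)|^2$.

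I expect the main obstacle to be bookkeeping around the harmonic weights and the truncation, rather than anything deep: Theorem~\ref{orthogonality-full-spectrum} is stated with the weight $J_X(\pi)/\ell(\pi)$ and over the whole generic spectrum, whereas Theorem~\ref{large-sieve} wants the weight $L(1,\pi,\Ad)^{-1}$ over cuspidal $\pi$ only. One must use that for cuspidal $\pi$ with $C(\pi)<X$ the two weights are comparable — the relation between $\ell(\pi)$, the completed adjoint $L$-value, and $L(1,\pi,\Ad)$ is presumably recorded in the body of the paper (cf.\ the definition \eqref{defn-ell-pi} and the harmonic-weight normalizations in Proposition~\ref{kuznetsov}) — together with positivity of $J_X$ to throw away the rest of the spectrum. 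A second, more technical, point is that the product hypothesis $\min(\,\cdots)\ll X^r$ in Theorem~\ref{orthogonality-full-spectrum} is a genuine constraint: for a pair $(m,n)$ with $N(m),N(n)\ll X$ one has $n_{r-1}^{r-1}\cdots n_1 m_1^{r-1}\cdots m_{r-1}\le N(m)N(n)^{\,?}$-type bounds that I would need to verify do not exceed $X^r$ once the implied constant in $N\ll X$ is taken small enough; this is exactly why the theorem requires the implicit constant in $N\ll X$ to be sufficiently small, and it should follow from an elementary inequality comparing $\prod n_i^{r-i}$ with a power of $\prod n_i$. Once these two points are dispatched, the large sieve inequality follows formally; no trace formula input beyond Theorem~\ref{orthogonality-full-spectrum} is needed.
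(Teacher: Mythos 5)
There is a genuine gap in your treatment of the off-diagonal terms, and it comes from expanding the square too early. The paper's proof applies positivity \emph{before} expanding: since $\left|\sum_{N\ll X}\alpha(n)\lambda_\pi(n)\right|^2\ge 0$ for every $\pi$, since $J_X\ge 0$ on all of $\hat{\X}_\gen$, $J_X(\pi)\gg 1$ for cuspidal $\pi$ with $C(\pi)<X$, and $\ell(\pi)\asymp L(1,\pi,\Ad)$ there, one majorizes the whole left-hand side in one stroke by $\int_{\hat{\X}_\gen}\left|\sum_{N\ll X}\alpha(n)\lambda_\pi(n)\right|^2 J_X(\pi)\ell(\pi)^{-1}d\mu_\aut(\pi)$; only then is the square expanded, and Theorem \ref{orthogonality-full-spectrum} makes every off-diagonal term vanish \emph{identically}, leaving exactly $W_X(1)\sum_{N\ll X}|\alpha(n)|^2\asymp X^{r-1}\sum_{N\ll X}|\alpha(n)|^2$. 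You instead expand first and then try to compare, pair by pair, the restricted cuspidal sum with the full smoothed average. For $m\ne n$ the quantity $\overline{\lambda_\pi(m)}\lambda_\pi(n)$ has no sign, so positivity cannot be applied termwise; your workaround is to write the cuspidal sum as the full $J_X/\ell$-integral minus the remainder $R_{m,n}$ of the spectrum and to bound $|R_{m,n}|\le\frac12\left(R_{m,m}+R_{n,n}\right)\ll X^{r-1}$. That pointwise bound discards all cancellation among the off-diagonal pairs: summing it against $|\alpha(m)\alpha(n)|$ yields at best $X^{r-1}\bigl(\sum_{N\ll X}|\alpha(n)|\bigr)^2$, which by Cauchy--Schwarz can be as large as $X^{r+o(1)}\sum_{N\ll X}|\alpha(n)|^2$ (there are $\asymp X^{1+o(1)}$ admissible $n$), a full factor of $X$ worse than the claimed bound. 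The correct use of the positivity of the discarded spectrum is as a quadratic form, namely $\sum_{m,n}\alpha(m)\overline{\alpha(n)}R_{m,n}=\int_{\mathrm{rest}}\left|\sum_n\alpha(n)\lambda_\pi(n)\right|^2(\cdots)\,d\mu_\aut(\pi)\ge 0$ — which is precisely the ``do not expand first'' argument.

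Your remaining points — the comparison of the harmonic weights $\ell(\pi)\asymp L(1,\pi,\Ad)$ for cuspidal $\pi$ via \eqref{defn-ell-pi}, and the verification that $N(m),N(n)\ll X$ with sufficiently small implied constants forces the hypothesis $\min(n_{r-1}^{r-1}\cdots n_1m_1^{r-1}\cdots m_{r-1},\,m_{r-1}^{r-1}\cdots m_1n_1^{r-1}\cdots n_{r-1})\ll X^r$ of Theorem \ref{orthogonality-full-spectrum} — are correct and match what the paper does.
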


Theorem \ref{large-sieve} is of similar strength as in \cite[Theorem 4]{B2} in the level aspect. We also mention previous works on large Sieve inequalities in \cite[Theorem 3]{B1} in the spectral parameter aspect, \cite[Theorem 2, Theorem 3]{BBM} in level aspect for $r=3$, and \cite{DK, V} in the level aspect for general $r$.

\subsection{Second moment of the central L-values}
Finally, we give a corollary of Theorem \ref{large-sieve} to the best possible, i.e. Lindel\"of on average, second moment estimate of the central $L$-values.

\begin{theorem}\label{second-moment}
Let $L(s,\pi)$ be the standard $L$-function attached to $\pi$. Then
$$\sum_{\substack{{C(\pi)<X}\\{\pi\,\,\mathrm{cuspidal}}}}\frac{|L(1/2,\pi)|^2}{L(1,\pi,\Ad)}\ll_\epsilon X^{r-1+\epsilon},$$
as $X$ tends off to infinity.
\end{theorem}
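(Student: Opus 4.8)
The plan is to derive Theorem \ref{second-moment} from the large sieve inequality of Theorem \ref{large-sieve} by means of an approximate functional equation for $L(1/2,\pi)$. First I would write, for each cuspidal $\pi$ with $C(\pi)<X$, an approximate functional equation expressing $L(1/2,\pi)$ as a sum of the shape
$$L(1/2,\pi)=\sum_{m}\frac{\lambda_\pi(m,1,\dots,1)}{\sqrt{\mathcal{N}(m)}}V\!\left(\frac{\mathcal{N}(m)}{\sqrt{C(\pi)}}\right)+(\text{dual sum}),$$
where $V$ is a fixed rapidly-decaying smooth cutoff, $\mathcal{N}(m)$ is the relevant norm of the multi-index, and the dual sum has the same structure after applying the functional equation (the archimedean gamma factors contributing the $\sqrt{C(\pi)}$-size effective length). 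Since $C(\pi)<X$, both sums are effectively supported on indices with $\mathcal{N}(m)\ll X^{1/2+\epsilon}$, which is comfortably smaller than the admissible range $N\ll X$ in Theorem \ref{large-sieve} provided the implied constants are arranged correctly. A dyadic decomposition in $\mathcal{N}(m)$ reduces matters to bounding, for each dyadic $M\ll X^{1/2+\epsilon}$,
$$\sum_{\substack{C(\pi)<X\\ \pi\text{ cuspidal}}}\frac{1}{L(1,\pi,\Ad)}\left|\sum_{\mathcal{N}(m)\sim M}\frac{\lambda_\pi(m,1,\dots,1)}{\sqrt{\mathcal{N}(m)}}W_M(m)\right|^2,$$
with $W_M$ a fixed bounded smooth weight depending on the dyadic block and the cutoff $V$.

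Next I would apply Theorem \ref{large-sieve} with the coefficient sequence $\alpha(n)$ supported on the block $\mathcal{N}(n)\sim M$, namely $\alpha(n)=\mathcal{N}(n)^{-1/2}W_M(n)$ when $n$ has the form $(m,1,\dots,1)$ and zero otherwise. The large sieve then bounds the above dyadic sum by
$$\ll X^{r-1}\sum_{\mathcal{N}(n)\sim M}|\alpha(n)|^2\ll X^{r-1}\sum_{\mathcal{N}(m)\sim M}\frac{1}{\mathcal{N}(m)}\ll X^{r-1}(\log X)^{O(1)},$$
using that the number of $m\in\N^{r-1}$ with $\mathcal{N}(m)\sim M$ is $O(M(\log M)^{r-2})$ so that $\sum_{\mathcal{N}(m)\sim M}\mathcal{N}(m)^{-1}\ll(\log M)^{r-1}$. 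Summing over the $O(\log X)$ dyadic blocks and over the (two) pieces of the approximate functional equation, and using Cauchy--Schwarz to handle the cross terms between the principal and dual sums (or just expanding $|a+b|^2\le 2|a|^2+2|b|^2$), gives the claimed bound $\ll_\epsilon X^{r-1+\epsilon}$.

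The technical points I would need to be careful about, rather than a genuine obstacle, are the following. One is that the approximate functional equation for $L(s,\pi)$ on $\GL(r)$ has its effective length governed by the analytic conductor $C(\pi)$, which itself varies with $\pi$; I would handle this by choosing the cutoff parameter uniformly in terms of $X$ (since $C(\pi)<X$) and absorbing the mild dependence into the smooth weight $V$, at the cost of the harmless $X^\epsilon$. A second is that the harmonic weight in Theorem \ref{large-sieve} is $L(1,\pi,\Ad)^{-1}$, which matches exactly the weight appearing in Theorem \ref{second-moment}, so no conversion between harmonic and natural weights is needed — this is precisely why the large sieve yields the clean averaged-Lindel\"of bound. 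The only place where one must be slightly attentive is the admissibility constraint $N\ll X$ with sufficiently small implied constant in Theorem \ref{large-sieve}: I would verify that the effective support $\mathcal{N}(m)\ll X^{1/2+\epsilon}$ coming from the approximate functional equation is well within this range, which it is with room to spare. Thus the main content is genuinely in Theorem \ref{large-sieve}, and Theorem \ref{second-moment} follows by a standard, if careful, unfolding argument.
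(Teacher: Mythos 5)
Your strategy is the same as the paper's: an approximate functional equation to open $|L(1/2,\pi)|^2$ into Dirichlet coefficients of length $\ll X^{1/2+\epsilon}$, followed by Theorem \ref{large-sieve} applied to coefficients supported on indices of the form $(n,1,\dots,1)$, which lands comfortably inside the admissible range $N\ll X$. The one place where your write-up is too casual is the claim that the cutoff $W_M$ is a \emph{fixed} weight: the weight in the approximate functional equation depends on $\pi$ through its archimedean Langlands parameters (via the gamma factors), not merely through the numerical bound $C(\pi)<X$, so it cannot be made $\pi$-independent just by ``choosing the cutoff parameter uniformly in terms of $X$''. The large sieve requires coefficients $\alpha(n)$ independent of $\pi$, so one must genuinely separate the $n$- and $\pi$-dependence; the paper does this via Harcos's uniform approximate functional equation (Lemma \ref{approximate-functional-equation}), writing the weight as a contour integral on $\Re(s)=\epsilon$, truncating to $|t|\ll X^\epsilon$, and applying Cauchy--Schwarz in $t$, which leaves the $\pi$-independent coefficients $n^{-1/2-\epsilon-it}$ at the cost of a harmless $X^\epsilon$. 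With that standard Mellin-separation step inserted in place of your ``absorb into $V$'' remark, your argument is the paper's proof.
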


A level aspect variant of Theorem \ref{second-moment} is recently proved in \cite[Corollary 5]{B2}. One can get rid of the harmonic weight $L(1,\pi,\Ad)$ in the above second moment estimate (similarly also in Theorem \ref{large-sieve}) by using an upper bound of $L(1,\pi,\Ad)$ as in \eqref{bound-l-value-at-1} from \cite{Li}.

\subsection{Smoothened average using analytic newvectors}\label{analytic-newvector}
We end this section with a few words about the archimedean analytic newvectors and their application in proving above theorems.
The classical or non-archimedean newvector theory was pioneered by Casselman \cite{C} and Jacquet--Piatetski-Shapiro--Shalika \cite{JPSS1}. They showed that for every generic irreducible representation $\pi$ of $\PGL_r(\Q_p)$ there exists a nonzero vector in $\pi$ which is invariant under the open compact subgroup $K_0(p^{c(\pi)})$, where $c(\pi)$ is the conductor exponent of $\pi$ and $K_0(p^N)$ is the subgroup of $\PGL_r(\Z_p)$ consisting matrices whose last row is congruent to $(0,\dots,0,*)\mod p^N$. Several works in the literature have used such invariance of newvectors when working on a family of automorphic forms with bounded conductor in the non-archimedean aspect, to understand the spectral side of a relative (e.g. Kuznetsov) trace formulae (e.g. \cite{BBM}, \cite{MV}).

As far as we know, in higher rank the averaging problems as described above have not been much dealt with over families with bounded analytic conductor in the archimedean aspect, due to unavailability of ``newvectors with required invariance'' at the archimedean place. In a previous work, jointly with Paul. D. Nelson, we gave an analytic analogue of the non-archimedean newvector theory at an archimedean place \cite[Theorem 1, Theorem 2]{JN}. Loosely speaking, we showed that for every nice enough representation $\pi$ (e.g. archimedean component of a cuspidal automorphic representation) of $\PGL_n(\R)$ there are nonzero vectors in $\pi$, which we call \emph{analytic newvectors}, which are invariant by some open ``approximate congruence subgroup'' analogous to $K_0(p^N)$ in the non-archimedean setting (see \eqref{defn-congruence-subgroup} for definition). 

The main contribution of the existence of analytic newvectors goes in the construction of the weight function $J_X(\pi)$ as in Theorem \ref{orthogonality-full-spectrum} so that its integral transform under the Kuznetsov trace formula is fairly easy to understand. Apart from the properties of $J_X$ stated in Theorem \ref{orthogonality-full-spectrum}, we also expect that $J_X(\pi)$ should decay rapidly if $C(\pi)\gg X^{1+\epsilon}$ i.e., $J_X$ behaves like an approximate projector on the automorphic representations with analytic conductor bounded by $X$, see \cite[Remark 5]{JN}. However, in this article we do not pursue in this direction and hope to come back in future.

\subsection*{Acknowledgement}
The author thanks his doctoral advisor Paul D. Nelson for several helpful discussions and guidance. The author also thanks Valentin Blomer and Kannan Soundararajan for many useful remarks, Jesse Thorner for several feedback on an earlier draft, and Farrell Brumley for encouragement. The author also acknowledges his doctoral school ETH Z\"urich and Max-Planck-Institute f\"ur Mathematik where most of the work has been done.

\section{Preliminary set-up}

\subsection{Basic notations}\label{basic-notations} 
Let $N<G$ be the maximal unipotent subgroup consisting of upper triangular matrices and $W$ be the Weyl group of $G$. For any subgroup $H<G$ we denote $[H]:=\Gamma\cap H\backslash H$. We define an additive character of $N$ by 
\begin{equation}\label{additive-char-unip}
    \psi_m(n(x))=e\left(\sum_{i=1}^{r-1}m_ix_{i,i+1}\right),\quad e(z):=e^{2\pi i z}, n(x)=(x_ij)_{i,j},
\end{equation}
for some $m\in \Z^{r-1}$. We abbreviate $\psi_{(1,\dots,1)}$ by $\psi$. We fix some Haar measures $dg$ and $dn$ on $G$ and $N$, respectively. Correspondingly, we fix right $G$-invariant quotient measures on $\X$ and $N\backslash G$, which will again, by abuse of notations, be denoted by $dg$. By $\delta(g)$ we will denote that the modular character of $G$ evaluated at $g$.

\subsection{Automorphic Forms}\label{automorphic-forms}
For details and general discussions we refer to \cite{G}. For any $\pi\in \hat{\X}_\gen$ and $\varphi\in \pi$ we denote its Whittaker functional by
\begin{equation}
    W_\varphi(g):=\int_{[N]}\varphi(xg)\overline{\psi(x)}dx.
\end{equation}
As $\pi$ is generic there is a $G$-equivariant isomorphism between $\pi$ and its Whittaker model $\W(\pi,\psi):=\{W_\varphi\mid \varphi\in\pi\}$, where $G$ acts on $\W(\pi,\psi)$ by right translation. 

We fix the usual $G$-invariant inner product on $\pi$  e.g., if $\pi$ is cuspidal it is induced from the inner product underlying $L^2(\X)$. A unitary structure on $\W(\pi,{\psi})$ can be given by (see \cite[Chapter 3]{J})
\begin{equation}\label{defn-inner-prod}
    \langle W_1,W_2\rangle_{\W(\pi,{\psi})}:=\int_{N_{r-1}\backslash \GL_{r-1}(\R)}W_1\left[\begin{pmatrix}h&\\&1\end{pmatrix}\right]\overline{W_2\left[\begin{pmatrix}h&\\&1\end{pmatrix}\right]}dh,
\end{equation}
for $W_1,W_2\in \W(\pi,{\psi})$, where $N_{r-1}$ is the maximal unipotent of the upper triangular matrices in $\GL_{r-1}(\R)$. We note that, by Schur's Lemma, there exists a positive constant $\ell(\pi)$ such that
\begin{equation}\label{defn-ell-pi}
    \|\varphi\|^2_{\pi}=\ell(\pi)\|W_\varphi\|^2_{\W(\pi,\psi)}.
\end{equation}
It is known that when $\pi$ is cuspidal then $\ell(\pi)\asymp L(1,\pi,\Ad)$ where the implied constant in $\asymp$ is absolute (coming from the residue of a maximal Eisenstein series at $1$, see \cite[p. $617$]{B3}).

We denote the $m$'th Fourier coefficient, for $m\in \Z^{r-1}$, attached to $\pi$ by $\lambda_\pi(m)$. So for $\varphi\in \pi$ and $m\in \N^{r-1}$ we can write
\begin{equation}\label{unipotent-integarl}
    \int_{[N]}\varphi(xg)\overline{\psi_m(x)}dx=\frac{\lambda_\pi(m)}{\delta^{1/2}(\tilde{m})}W_\varphi(\tilde{m}g),
\end{equation}
where 
$$\tilde{m}=\diag(m_1\dots m_{r-1}, m_1\dots m_{r-2}, \dots, m_1,1).$$
Here $\lambda_\pi(m)$ is normalized so that $\lambda_\pi(1,\dots,1)=1$ and the Ramanujan conjecture would imply that $\lambda_\pi(m)\ll_\epsilon |m|^\epsilon$. 

\subsection{L-functions and conductor}\label{l-function-condutor} One can attach a global $L$-function to a $\pi\in \hat{\X}_\gen$, denoted by $L(s,\pi)$, which can be given by a Dirichlet series in some right half plane by
$$L(s,\pi):=\sum_{n=1}^\infty\frac{\lambda_\pi(n,1,\dots,1)}{n^s},$$
and meromorphically continued to all $s\in \C$. Every such $L$-function satisfies a functional equation as follows
\begin{equation}\label{global-functional-equation}
    L(1/2+s,\pi)=\gamma_\infty(1/2+s,\pi)L(1/2-s,\tilde{\pi}),
\end{equation}
where $\tilde{\pi}$ is the contragredient of $\pi$ and $\gamma_\infty$ is the local gamma factor attached to the archimedean data attached to $\pi$. One can define the $\gamma$-factor by
$$\gamma_\infty(1/2+s,\pi):=\epsilon_\infty(\pi)\frac{\prod_{i=1}^r\Gamma_\R(1/2-s+\overline{\mu^\infty_i(\pi)})}{\prod_{i=1}^r\Gamma_\R(1/2+s+\mu^\infty_i(\pi))},$$
where $\epsilon_\infty$ is the $\epsilon$-factor which is of modulus 1, $\{\mu^\infty_i(\pi)\}$ are the archimedean Langlands parameters attached to $\pi$, and $\Gamma_\R(s):=\pi^{-s/2}\Gamma(s/2)$. One has $\epsilon_\infty(\tilde{\pi})=\overline{\epsilon_\infty(\pi)}$ and consequently
$$\gamma_\infty(1/2,\tilde{\pi})=\overline{\gamma_\infty(1/2,\pi)},\quad |\gamma_\infty(1/2,\pi)|=1.$$
Correspondingly, one can define the analytic conductor $C(\pi)$ of $\pi$ by
$$C(\pi):=\prod_{i=1}^r(1+|\mu^\infty_i(\pi)|).$$
It is known, e.g. by Stirling estimates, that
\begin{equation}\label{bound-gamma-factor}
    \gamma_\infty(1/2-s,\pi) \asymp_{\Re(s)} C(\pi\otimes |\det|^{\Im(s)})^{\Re(s)}\ll_{\Re(s)} C(\pi)^{\Re(s)}(1+|s|)^{r\Re(s)},
\end{equation}
as long as $s$ is away from a pole or zero of the $\gamma_\infty$.

\subsection{Kloosterman sum} We refer to \cite[Chapter 11]{G} and \cite{F} for detailed discussion of Kloosterman sum on $\GL(r)$. For a tuple of nonzero integers $c:=(c_1,\dots,c_{r-1})$ we denote the matrix $\diag(1/c_{r-1},c_{r-1}/c_{r-2},\dots, c_2/c_1,c_1)$ by $c^*$. For $w\in W$ an Weyl element let $\Gamma_w:=\Gamma\cap G_w$ where $G_w$ is the Bruhat cell of $G$ attached to $w$.

If, for $m,n\in \N^{r-1}$ and $w\in W$, the \emph{compatibility condition}
$$\psi_m(c^*wxw^{-1}c^{*-1})=\psi_n(x),\quad x\in w^{-1}Nw\cap N,$$
holds then the Kloosterman sum attached to $m,n$ and moduli $c$ is defined by
$$S_w(m,n;c):=\sum_{\substack{{\gamma\in \Gamma_N\backslash\Gamma_w/(w^{-1}\Gamma_N^tw\cap\Gamma_N)}\\{\gamma=b_1c^*wb_2}}}\psi_m(b_1)\psi_n(b_2),$$
where $\gamma=b_1c^*wb_2$ denotes its Bruhat decomposition (see \cite[Chapter 10]{G}). The following result is due to Friedberg, a proof can be found in \cite[p.175]{F}.

\begin{lemma}\label{kloosterman-support}
Let $w\in W$ be any Weyl element.
The compatibility condition in the definition of the Kloosterman sum is satisfied only if $w$ is of the form
$$\begin{pmatrix}&&I_{d_1}\\&\iddots&\\I_{d_k}&&\end{pmatrix},\quad d_1+\dots +d_k=r,$$
where $I_d$ is the identity matrix of rank $d$, i.e. $S_w(m,n;c)$ is nonzero only for this type of Weyl element.
\end{lemma}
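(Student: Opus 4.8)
The plan is to convert the compatibility condition into an elementary condition on the permutation underlying $w$, and then to classify the permutations that satisfy it. Identify $w$ with the permutation $\sigma\in S_r$ determined by $we_k=e_{\sigma(k)}$, and for $i\ne j$ write $x_{ij}(t):=I+tE_{ij}$ for the one-parameter root subgroup, so that $N$ is generated by the $x_{ij}(t)$ with $i<j$. Conjugation acts by $wx_{ij}(t)w^{-1}=x_{\sigma(i),\sigma(j)}(t)$, and, writing $c^*=\diag(a_1,\dots,a_r)$ with all $a_l\ne0$, we have $c^*x_{kl}(t)c^{*-1}=x_{kl}((a_k/a_l)t)$. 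Since $w^{-1}Nw\cap N$ is the product of those root subgroups $x_{ij}(\cdot)$ with $i<j$ and $\sigma(i)<\sigma(j)$, and since both sides of the compatibility identity are group homomorphisms in the variable $x$, it suffices to test the identity on each such one-parameter subgroup, on which each side is an additive character $t\mapsto e(\alpha t)$.

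First I would carry out that test. For $i<j$ with $\sigma(i)<\sigma(j)$, the right-hand side $\psi_n(x_{ij}(t))$ equals $e(n_i t)$ when $j=i+1$ and is trivial otherwise, while the left-hand side equals $\psi_m(x_{\sigma(i),\sigma(j)}((a_{\sigma(i)}/a_{\sigma(j)})t))$, which is $e(m_{\sigma(i)}(a_{\sigma(i)}/a_{\sigma(j)})t)$ when $\sigma(j)=\sigma(i)+1$ and is trivial otherwise. This is exactly where the hypothesis $m,n\in\N^{r-1}$ is used: every coordinate $m_l,n_l$ is strictly positive (and the $a_l$ nonzero), so a factor $e(\,\cdot\,t)$ above is trivial precisely when the corresponding pair of indices is not of the ``consecutive'' type. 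Comparing the two sides over all admissible pairs forces, from the consecutive pairs $(i,i+1)$ with $\sigma(i)<\sigma(i+1)$, that $\sigma(i+1)=\sigma(i)+1$ for every such $i$; equivalently, $\sigma(i+1)\le\sigma(i)+1$ for all $i$. (The non-consecutive pairs yield the symmetric inequality for $\sigma^{-1}$, which will be automatic below.)

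It then remains to show that a permutation $\sigma$ with $\sigma(i+1)\le\sigma(i)+1$ for all $i$ is precisely one of the block-anti-diagonal permutations in the statement. Reading $\sigma(1),\dots,\sigma(r)$ from left to right, I would split the sequence into maximal ascending runs; the inequality forces every ascent to be an ascent by exactly $1$, so each run is an interval of consecutive integers written increasingly, and each run boundary is a strict descent. A short argument then shows that the left endpoint of each successive run is strictly smaller than that of the preceding run, so the runs, viewed as subsets of $\{1,\dots,r\}$, are consecutive integer intervals with strictly decreasing left endpoints; recording their lengths, in the order in which they occur, as $d_k,\dots,d_1$ reproduces exactly the displayed matrix. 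In particular $\sigma^{-1}$ is again block-anti-diagonal, hence satisfies the same inequality automatically, which justifies the parenthetical remark above.

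The real work here is bookkeeping rather than analysis: fixing the permutation convention consistently, invoking the standard description of $w^{-1}Nw\cap N$ as a product of root subgroups, and making the run-decomposition argument airtight --- chiefly ruling out that two runs interleave on the number line. The one conceptual point I would stress is that the strict positivity of every coordinate of both $m$ and $n$ is indispensable: were some $m_l$ or $n_l$ to vanish, additional Weyl elements would satisfy the compatibility condition and the stated conclusion would fail.
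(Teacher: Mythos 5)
Your argument is correct: reducing the compatibility condition to the root subgroups of $\bar N_w=w^{-1}Nw\cap N$, using the strict positivity of the entries of $m$ and $n$ to force $\sigma(i+1)\le\sigma(i)+1$, and then classifying such permutations via maximal ascending runs is exactly the standard proof (the paper itself gives no proof here, deferring to Friedberg \cite[p.175]{F}, whose argument is essentially yours). The only step you leave as "bookkeeping" --- that successive runs cannot interleave --- does follow immediately from disjointness of the runs as subsets of $\{1,\dots,r\}$ together with each run being an interval whose left endpoint lies strictly below the previous run's left endpoint, so nothing is missing.
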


\subsection{Bessel distribution} In this subsection we let $\pi$ to be an abstract generic irreducible representation of $G$. Let $\B(\pi)$ be an orthonormal basis of $\W(\pi,\psi)$. We define the Bessel distribution $J_\pi$ attached to $\pi$ by 
\begin{equation}\label{bessel-distribution}
    J_F(\pi):=\sum_{W\in \B(\pi)}\pi(F)W(1)\overline{W(1)},
\end{equation}
for some $F\in C^\infty_c(G)$. We refer to \cite{CPS1, BM, LO} and references there for general discussions about Bessel functions attached to generic representations.

\begin{lemma}\label{well-defined-bessel}
The RHS of \eqref{bessel-distribution} is well-defined and does not depend on the choice of the orthonormal basis $\B(\pi)$. Thus if $F$ is a self-convolution of some $f\in C_c^\infty(G)$, i.e. 
$$F(g)=\int_{G}f(gh)\overline{f(h)}dh,$$
then
$$J_F(\pi)=\sum_{W\in \B(\pi)}|\pi(f)W(1)|^2,$$
for $\B(\pi)$ some orthonormal basis of $\pi$.
\end{lemma}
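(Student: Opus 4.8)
The plan is to establish the two assertions of Lemma~\ref{well-defined-bessel} in turn: first that the sum $\sum_{W\in\B(\pi)}\pi(F)W(1)\overline{W(1)}$ is absolutely convergent and basis-independent, and second that in the self-convolution case it collapses to $\sum_{W\in\B(\pi)}|\pi(f)W(1)|^2$. For the first part, I would interpret the expression intrinsically. Each $W\mapsto W(1)$ is the $\psi$-Whittaker functional on $\W(\pi,\psi)$, which is a continuous linear functional on the smooth vectors; call it $\Lambda$. Then $\sum_{W\in\B(\pi)}\pi(F)W(1)\overline{W(1)} = \sum_{W\in\B(\pi)} \Lambda(\pi(F)W)\,\overline{\Lambda(W)}$, and the point is that this equals $\mathrm{tr}\big(\pi(F)\circ P\big)$ where $P$ is the rank-one-ish object $v\mapsto \langle v, \Lambda^\vee\rangle$... more precisely, one writes it as the trace of the composition of the Hilbert--Schmidt (indeed trace-class, since $F\in C_c^\infty(G)$ and $\pi$ is, say, tempered or at least has a smooth kernel) operator $\pi(F)$ with the "evaluation against $\Lambda$" structure. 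The clean way: since $F\in C^\infty_c(G)$, the operator $\pi(F)$ is trace-class (smoothing), so $\sum_W \langle \pi(F) W, W'\rangle$-type sums converge absolutely and the quantity $\sum_W \Lambda(\pi(F)W)\overline{\Lambda(W)}$ is the value at $(\Lambda,\Lambda)$ of the Schwartz kernel of $\pi(F)$, which is manifestly independent of the orthonormal basis $\B(\pi)$.

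For basis-independence concretely, I would argue as follows. Write the kernel: for any two trace-class-friendly vectors, $\sum_{W\in\B(\pi)} (\pi(F)W)\otimes \overline{W}$ is the identity-resolution-twisted-by-$\pi(F)$ and does not depend on $\B(\pi)$; pairing the first slot against $\Lambda$ and the second against $\overline{\Lambda}$ gives exactly \eqref{bessel-distribution}. Since $\pi(F)$ has a jointly smooth integral kernel $K_F(g,h)$ on $\pi$ (as $F$ is compactly supported and smooth), and $\Lambda$ is continuous on smooth vectors, applying $\Lambda\otimes\overline{\Lambda}$ to $K_F$ yields a finite answer and the basis drops out. This is the standard fact that the Bessel distribution is well-defined; the exposition should cite \cite{CPS1, BM, LO} for the analytic input that $\pi(F)$ is of trace class with smooth kernel when $F\in C_c^\infty(G)$.

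For the second assertion, given $F(g)=\int_G f(gh)\overline{f(h)}\,dh$ with $f\in C_c^\infty(G)$, one has the operator identity $\pi(F)=\pi(f)\pi(f)^*$, since $\pi(\overline{f})^* \cdot$... explicitly $\pi(F) = \pi(f)\circ \pi(f)^{*}$ where $\pi(f)^*$ is the Hilbert-space adjoint (using unitarity of $\pi$ and the definition of convolution). Then for each $W\in\B(\pi)$,
$$
\pi(F)W(1)\,\overline{W(1)} = \Lambda\big(\pi(f)\pi(f)^*W\big)\,\overline{\Lambda(W)}.
$$
Now re-expand $\pi(f)^* W$ in the orthonormal basis $\B(\pi)$, or rather use that $\sum_{W}\langle\,\cdot\,, \pi(f)^*W\rangle \Lambda(W)$ reorganizes: summing over $W$ and using Parseval,
$$
\sum_{W\in\B(\pi)} \Lambda\big(\pi(f)\pi(f)^*W\big)\overline{\Lambda(W)}
= \sum_{W\in\B(\pi)} \big\langle \pi(f)^* W,\ \pi(f)^* W_{\Lambda}\big\rangle\ \text{-type terms},
$$
and a cleaner route is: since the left side is basis-independent by part one, choose the basis adapted so that the computation is transparent, or simply note
$$
\sum_{W}\Lambda(\pi(f)\pi(f)^*W)\overline{\Lambda(W)}
=\sum_{W}\langle \pi(f)^*W,\ \pi(f)^*\!\!\sum_{W'}\overline{\Lambda(W')}W'\rangle\cdots
$$
The honest clean finish: by basis independence it suffices to observe that $\sum_W \Lambda(\pi(f)\pi(f)^*W)\overline{\Lambda(W)} = \sum_W |\Lambda(\pi(f)^* W)|^2$ after replacing $\B(\pi)$ by the basis $\{\pi(f)^*W/\|\cdot\|\}$-style adjustment — or most simply: since $\{W\}$ runs over an orthonormal basis, $\Lambda(\cdot) = \langle \cdot, v_\Lambda\rangle$ for the Riesz vector $v_\Lambda$ representing $\Lambda$ on smooth vectors (a regularized statement), and then $\sum_W \langle \pi(f)\pi(f)^* W, v_\Lambda\rangle\overline{\langle W, v_\Lambda\rangle} = \langle \pi(f)\pi(f)^* v_\Lambda, v_\Lambda\rangle = \|\pi(f)^* v_\Lambda\|^2 = \sum_W |\langle \pi(f)^* v_\Lambda, W\rangle|^2 = \sum_W|\langle v_\Lambda, \pi(f)W\rangle|^2 = \sum_W |\Lambda(\pi(f)W)|^2 = \sum_W |\pi(f)W(1)|^2$, as claimed. (One keeps track that $v_\Lambda$ may only exist after suitable regularization, so in practice one runs the whole computation with $\pi(F)$ already trace-class so every rearrangement is legitimate.)

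The main obstacle is the analytic bookkeeping: $\Lambda$ (evaluation at $1$ of a Whittaker function) is \emph{not} an $L^2$-bounded functional on $\W(\pi,\psi)$ with its norm \eqref{defn-inner-prod}, only continuous on smooth vectors, so the "Riesz vector $v_\Lambda$" does not literally exist in the Hilbert space. The fix is to never introduce $v_\Lambda$ but instead exploit that $\pi(F)$ (resp. $\pi(f)$ applied first) is smoothing: $\pi(F)W$ lies in a fixed bounded family of smooth vectors on which $\Lambda$ is uniformly continuous, so all sums converge absolutely (bound $|\Lambda(\pi(F)W)| \ll \|W\|$ via a Sobolev-norm estimate on $\pi(F)W$ in terms of $\|W\|$, using smoothness and compact support of $F$), and all the Hilbert-space manipulations above are then applied to the genuinely trace-class operator $\pi(F)$, with $\Lambda$ only ever evaluated on its image. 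Handling this uniform continuity of the Whittaker functional on the smoothed vectors — and the corresponding absolute convergence — is the one genuine technical point; everything else is formal linear algebra in the trace-class category.
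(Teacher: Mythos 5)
Your proposal reaches the right conclusion, but by a route genuinely different from the paper's, and the weakest link in your exposition---the Riesz vector $v_\Lambda$---is precisely what the paper's argument is built to bypass. Both proofs delegate the first assertion (well-definedness and basis-independence) to \cite[Lemma 23.1, Lemma 23.3]{BM}; there is no disagreement there. For the second assertion you factorize $\pi(F)=\pi(f)\pi(f)^{*}$ and then run a Parseval computation through a fictitious Riesz representative of the evaluation functional $\Lambda\colon W\mapsto W(1)$. As you yourself note, $\Lambda$ is not $L^{2}$-bounded on $\W(\pi,\psi)$, so $v_\Lambda$ does not exist; your proposed repair (work only with the smoothing, trace-class $\pi(F)$) still leaves an identity of the form $\sum_{W}\langle u,W\rangle\,\Lambda(W)=\Lambda(u)$ to be justified for $u$ in the image of $\pi(f)$, which requires convergence of the orthonormal expansion in a topology in which $\Lambda$ is continuous. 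That is exactly the step you flag as ``the one genuine technical point'' and leave open, so as written the argument has an acknowledged gap; it is fixable, but filling it essentially re-derives part of the analytic input already packaged in \cite{BM}.

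The paper avoids $v_\Lambda$ entirely. It writes
\[
J_{F}(\pi)=\sum_{W\in\B(\pi)}\int_{G}\int_{G}f(gh)\,W(g)\,\overline{f(h)W(1)}\,dg\,dh,
\]
and notes that for each fixed $h$ the inner $g$-integral together with the $W$-sum is $\overline{f(h)}\,J_{F_{h}}(\pi)$ with $F_{h}(g):=f(gh)\in C_{c}^{\infty}(G)$. The first assertion therefore licenses, for each $h$, replacing $\B(\pi)$ by the orthonormal basis $\{\pi(h)W\}_{W\in\B(\pi)}$. Since $(\pi(h)W)(g)=W(gh)$ and $(\pi(h)W)(1)=W(h)$, the summand becomes $\int_{G}f(gh)W(gh)\,dg\cdot\overline{f(h)W(h)}=\pi(f)W(1)\cdot\overline{f(h)W(h)}$ after the change of variable $g\mapsto gh^{-1}$; integrating out $h$ and applying Fubini then gives $\sum_{W}|\pi(f)W(1)|^{2}$. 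The only analytic input is basis-independence of the Bessel distribution for arbitrary test functions in $C_{c}^{\infty}(G)$---which part one already supplies---plus Fubini. So your core idea (the adjoint factorization followed by Parseval) is sound in spirit, but the $h$-dependent change of basis is strictly more economical: it spends only what the first assertion has already paid for and never needs a Riesz representative of $\Lambda$.
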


\begin{proof}
The first assertion follows from \cite[Lemma 23.1, Lemma 23.3]{BM}. For the second assertion we see that the RHS of \eqref{bessel-distribution} is
$$\sum_{W\in \B(\pi)}\int_{G}\int_{G}f(gh)W(g)\overline{f(h)W(1)}dgdh=\sum_{W\in \B(\pi)}\int_{G}f(g)W(g)dg\int_{G}\overline{f(h)W(h)}dh,$$
after changing the basis to $\{\pi(h)W\}_{W\in\B(\pi)}$, and we conclude.
\end{proof}

\subsection{Pre-Kuznetsov formula} Now we will record a soft version of the Kuznetsov formula for $\GL(n)$, as in \cite{CPS1}. Our goal in this subsection is to prove Proposition \ref{kuznetsov}.

Recall that $d\mu_\aut$ is the automorphic Plancherel measure on $\hat{\X}$ compatible with the $G$-invariant measure of $\X$. We fix a test function $f\in C^\infty_c(G)$. We define $F$ to be the self-convolution of $f$ as in Lemma \ref{well-defined-bessel}.
We consider the sum 
$$\sum_{\gamma\in\Gamma}F(x_1^{-1}\gamma x_2), \quad x_1,x_2\in G.$$
The above sum as a function of $x_2$ is left $\Gamma$-invariant, hence lives in $L^2(\X)$. So we can spectrally decompose the sum as follows.
\begin{equation}\label{spectral-decomposition}
    \sum_{\gamma\in\Gamma}F(x_1^{-1}\gamma x_2)=\int_{\hat{\X}}\sum_{\varphi\in\B^o(\pi)}\frac{\overline{\pi(\bar{F})\varphi(x_1)}{\varphi(x_2)}}{\|\varphi\|^2_\pi}d\mu_{\aut}(\pi),
\end{equation}
where $\B^o(\pi)$ is an orthogonal basis of $\pi$. For generic $\pi$ we normalize $\varphi\in\pi$ so that its first Fourier coefficient is $1$.

We fix $m,n\in \N^{r-1}$ and integrate both sides of \eqref{spectral-decomposition} against $\psi_m(x_1)\overline{\psi_n(x_2)}$ over $x_1,x_2\in [N]$. As the non-generic part of the spectrum automatically vanishes, we obtain using \eqref{unipotent-integarl} and \eqref{defn-ell-pi} that
\begin{multline}\label{pre-kuznetsov}
    \int_{\hat{\X}_\gen}\ell(\pi)^{-1}\frac{\overline{\lambda_\pi(m)}\lambda_\pi(n)}{\delta^{1/2}(\tilde{m}\tilde{n})}\sum_{W\in\B(\pi)}\overline{\pi(\bar{F})W(\tilde{m})}W(\tilde{n})d\mu_{\mathrm{aut}}(\pi)\\=\sum_{\gamma\in\Gamma}\int_{[N]^2}F(x_1^{-1}\gamma x_2)\psi_m(x_1)\overline{\psi_n(x_2)}dx_1dx_2.
\end{multline}
Here we have identified $\pi$ with its Whittaker model $\W(\pi)$ for generic $\pi$ and $\B(\pi)$ is an orthonormal basis of $\W(\pi)$.

We replace $F$ in \eqref{pre-kuznetsov} by $F_{m,n}$ where
$$F_{m,n}(g):=F(\tilde{m}g\tilde{n}^{-1}).$$
Using Lemma \ref{well-defined-bessel} we obtain 
$$\sum_{W\in\B(\pi)}\overline{\pi(\bar{F}_{m,n})W(\tilde{m})}W(\tilde{n})=J_{\bar{F}}(\pi).$$

We replace $F$ by $F_{m,n}$ in the RHS of \eqref{pre-kuznetsov} as well and simplify. Doing a folding-unfolding over the $\gamma$-sum and $x_1$-integral we obtain the RHS is
$$\sum_{\gamma\in \Gamma_N\backslash\Gamma}\int_{N}\int_{[N]}F(\tilde{m}x_1\gamma x_2\tilde{n}^{-1})\overline{\psi_m(x_1)\psi_n(x_2)}dx_1dx_2.$$
Note that the identity term in the above sum simplifies to
$$\int_NF(\tilde{m}x_1\tilde{n}^{-1})\overline{\psi_m(x_1)}dx_1\int_{[N]}\overline{\psi_n(x_2)}\psi_m(x_2)dx_2=\frac{\delta_{m=n}}{\delta(\tilde{m})}\int_N F(x)\overline{\psi(x)}dx.$$
On the other hand, we do a Bruhat decomposition of $\Gamma- \Gamma_N$. For $\gamma\in \Gamma_w$ which is the Bruhat cell attached to $w\in W$ we write $x_2$-integral over $[N]$ as a product of $x_{2,1}$-integral over $[N_w]$ and $x_{2,2}$-integral over $[\bar{N}_w]$, where
$$\bar{N}_w:=w^{-1}N w\cap N,\quad {N}_w:=w^{-1}N^tw\cap N.$$
\begin{multline*}
    \sum_{\gamma\in \Gamma_N\backslash(\Gamma-\Gamma_N)}\int_{N}\int_{[N]}F(\tilde{m}x_1\gamma x_2\tilde{n}^{-1})\overline{\psi_m(x_1)\psi_n(x_2)}dx_1dx_2\\
    =\sum_{1\neq w\in W}\sum_{c\in \Z^{r-1}_{\neq 0}}\sum_{\substack{{\gamma\in \Gamma_N\backslash\Gamma_w/\Gamma_{{N}_w}}\\{\gamma=b_1c^*wb_2}}}\sum_{\theta\in \Gamma_{{N}_w}}\int_N\int_{[N_w]}\int_{[\bar{N}_w]}F(\tilde{m}x_1b_1c^*wb_{2}\theta x_{2,2}x_{2,1}\tilde{n}^{-1})\\
    \times\overline{\psi_m(x_1)\psi_n(x_{2,1})\psi_n(x_{2,2})}dx_{2,2}dx_{2,1}dx_1.
\end{multline*}
Again unfolding over the $\theta$-sum and $x_{2,2}$-integral, changing of variables, and recalling the definition of the Kloosterman sum we obtain the summand above corresponding to $w\in W $ and $c\in \Z^{r-1}_{\neq 0}$ is
\begin{equation*}
    S_w(m,n;c)\int_N\int_{N_w}\int_{[\bar{N}_w]}F(\tilde{m}x_1c^*wx_{2,1}x_{2,2}\tilde{n}^{-1})\overline{\psi_m(x_1)\psi_n(x_{2,2})\psi_n(x_{2,1})}dx_{2,1}dx_{2,2}dx_1.
\end{equation*}
Now writing $x_{2,1}=x_{2,1}(c^*w)^{-1}(c^*w)\in N$ and using the compatibility condition of the Kloosterman sum we obtain the above equals to
\begin{equation*}
    S_w(m,n;c)\int_N\int_{N_w}F(\tilde{m}x_1c^*wx_{2}\tilde{n}^{-1})\overline{\psi_m(x_1)\psi_n(x_{2})}dx_2dx_1.
\end{equation*}
Finally, changing variables $x_1\mapsto \tilde{m}x_1\tilde{m}^{-1}$ and $x_2\mapsto\tilde{n}x_2\tilde{n}^{-1}$
we obtain the RHS of \eqref{pre-kuznetsov} is
\begin{equation*}
    \sum_{1\neq w\in W}\sum_{c\in \Z^{r-1}_{\neq 0}}\frac{S_w(m,n;c)}{\delta(\tilde{m})\delta_w(\tilde{n})}\int_N\int_{N_w}F(x_1\tilde{m}cw\tilde{n}^{-1}x_2)\overline{\psi(x_1)\psi(x_{2})}dx_2dx_1.
\end{equation*}
Here $\delta_w(\tilde{n})$ is the Jacobian arising from the change of variable $x_2\mapsto\tilde{n}x_2\tilde{n}^{-1}$.
We define, for $F\in C_c^\infty(G)$, the function
\begin{equation}\label{psi-average}
    W_F(g):=\int_N F(xg)\overline{\psi(x)}dx,
\end{equation}
lies in $C^\infty_c(N\backslash G,\psi)$. It can be checked that the RHS of \eqref{psi-average} is absolutely convergent.
Thus we obtain a version of the Kuznetsov trace formula.
\begin{prop}\label{kuznetsov}
Let $F\in C^\infty_c(G)$ and $m,n\in \N^{r-1}$. Then
\begin{multline*}
    \int_{\hat{\X}_\gen}\overline{\lambda_\pi(m)}\lambda_\pi(n)\frac{J_{\bar{F}}(\pi)}{\ell(\pi)}d\mu_\aut(\pi)=\delta_{m=n}W_F(1)\\+\sum_{1\neq w\in W}\frac{\delta^{1/2}(\tilde{n})}{\delta^{1/2}(\tilde{m})\delta_w(\tilde{n})}\sum_{c\in \Z^{r-1}_{\neq 0}}S_w(m,n;c)\int_{N_w}W_F(\tilde{m}c^*w\tilde{n}^{-1}x)\overline{\psi(x)}dx,
\end{multline*}
where $W_F$ and $J_F$ are as in \eqref{psi-average} and \eqref{bessel-distribution}, respectively.
\end{prop}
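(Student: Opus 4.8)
The plan is to derive Proposition \ref{kuznetsov} exactly as the computation preceding its statement suggests, so the ``proof'' is mostly an organized assembly of the steps already laid out: start from the spectral decomposition \eqref{spectral-decomposition} of the automorphic kernel $\sum_{\gamma\in\Gamma}F(x_1^{-1}\gamma x_2)$, project both sides against the characters $\psi_m\otimes\overline{\psi_n}$ over $[N]\times[N]$, substitute $F\mapsto F_{m,n}$, and then identify the two sides. First I would justify that the function $x_2\mapsto\sum_{\gamma\in\Gamma}F(x_1^{-1}\gamma x_2)$ is left $\Gamma$-invariant, of compact support modulo $\Gamma$ (since $F\in C_c^\infty(G)$ the sum is locally finite), hence lies in $L^2(\X)$ and decomposes spectrally as in \eqref{spectral-decomposition}; here one also uses that $F$ is a self-convolution $f\ast f^\ast$ so that $\pi(F)$ is positive and Lemma \ref{well-defined-bessel} applies. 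Then I would integrate in $x_1,x_2$ over $[N]^2$: on the spectral side only the generic constituents survive (non-generic $\pi$ have vanishing $\psi$-Fourier coefficient), and \eqref{unipotent-integarl} together with \eqref{defn-ell-pi} converts the Whittaker periods into $\ell(\pi)^{-1}\overline{\lambda_\pi(m)}\lambda_\pi(n)\delta^{-1/2}(\tilde m\tilde n)\sum_{W}\overline{\pi(\bar F)W(\tilde m)}W(\tilde n)$, which after the substitution $F\mapsto F_{m,n}$ and Lemma \ref{well-defined-bessel} becomes $\ell(\pi)^{-1}\overline{\lambda_\pi(m)}\lambda_\pi(n)J_{\bar F}(\pi)$ up to the normalizing $\delta$-factors that get absorbed.

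The geometric side is the substantive bookkeeping. After the substitution $F\mapsto F_{m,n}$ the right side is $\sum_{\gamma\in\Gamma}\int_{[N]^2}F(\tilde m x_1^{-1}\gamma x_2\tilde n^{-1})\overline{\psi_m(x_1)\psi_n(x_2)}\,dx_1dx_2$, and I would unfold the $\Gamma$-sum along $\Gamma_N\backslash\Gamma$, trading the $x_1$-integral over $[N]$ for an integral over all of $N$ (folding--unfolding). Splitting off the identity coset $\gamma\in\Gamma_N$ gives, by orthogonality of $\psi_m,\psi_n$ on $[N]$, the diagonal term $\delta_{m=n}\delta(\tilde m)^{-1}\int_N F(x)\overline{\psi(x)}\,dx=\delta_{m=n}W_F(1)$ (after the change of variable $x\mapsto\tilde m^{-1}x\tilde m$ which has trivial Jacobian on $N$ up to the $\delta(\tilde m)$ already written). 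For the remaining cosets I would run the Bruhat decomposition $\Gamma-\Gamma_N=\bigsqcup_{1\neq w\in W}\Gamma_w$, write $\Gamma_w=\bigsqcup_{c}\bigsqcup_{\gamma=b_1c^\ast w b_2}\Gamma_N\cdot(\cdots)\cdot\Gamma_{N_w}$, split the $x_2$-integral over $[N]$ as $[N_w]\times[\bar N_w]$, and unfold over the $\Gamma_{N_w}$-sum together with the $x_{2,2}$-integral; collapsing the $b_1,b_2$ sums against $\psi_m,\psi_n$ produces precisely the Kloosterman sum $S_w(m,n;c)$. The compatibility condition (which forces $S_w$ nonzero only for the ``block anti-diagonal'' $w$ of Lemma \ref{kloosterman-support}) is exactly what lets me rewrite $x_{2,1}(c^\ast w)^{-1}(c^\ast w)$ and collapse the $\bar N_w$-integral, after which the change of variables $x_1\mapsto\tilde m x_1\tilde m^{-1}$, $x_2\mapsto\tilde n x_2\tilde n^{-1}$ (producing the Jacobian $\delta_w(\tilde n)$ and the $\delta$-factors) turns the inner integral into $\int_{N_w}W_F(\tilde m c^\ast w\tilde n^{-1}x)\overline{\psi(x)}\,dx$ with $W_F$ as in \eqref{psi-average}.

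The main obstacle I anticipate is not any single clever idea but the justification of absolute convergence at each unfolding: the $\gamma$-sum, the $c$-sum over $\Z_{\neq0}^{r-1}$, and the newly non-compact $x_1$-integral over $N$ must all be shown to converge absolutely so that Fubini and the rearrangements are legitimate. Since $F\in C_c^\infty(G)$, for fixed $x_1,x_2$ the $\gamma$-sum is finite, and the support condition on $F$ confines $c^\ast w$ (hence $c$) to a finite set once $\tilde m,\tilde n$ are fixed; the $x_1$-integral over $N$ is genuinely over a noncompact group but $g\mapsto\int_N F(xg)\overline{\psi(x)}\,dx$ has compact support in $N\backslash G$ and the $N$-integrand is compactly supported for each $g$, giving the absolute convergence of $W_F$ asserted just before the Proposition. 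I would state these finiteness facts as a short lemma or inline remark and then the interchange of sum and integral at each stage is routine. A secondary point to handle carefully is the normalization of Haar measures on $N$, $N_w$, $\bar N_w$, $[N]$ and the bookkeeping of modular characters $\delta(\tilde m)$, $\delta(\tilde n)$, $\delta_w(\tilde n)$ under the conjugation changes of variables, so that the final constants match those displayed in Proposition \ref{kuznetsov}; this is purely mechanical but error-prone, and I would do it once at the Bruhat-cell level and quote it for all $w$.
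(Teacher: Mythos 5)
Your proposal is correct and follows essentially the same route as the paper: spectral decomposition of the kernel $\sum_{\gamma}F(x_1^{-1}\gamma x_2)$, projection against $\psi_m\otimes\overline{\psi_n}$ over $[N]^2$, the substitution $F\mapsto F_{m,n}$ combined with Lemma \ref{well-defined-bessel} to produce $J_{\bar F}(\pi)$, and then the folding–unfolding plus Bruhat decomposition on the geometric side, with the identity coset giving $\delta_{m=n}W_F(1)$ and the remaining cells giving the Kloosterman terms after the conjugation changes of variables. Your added attention to absolute convergence and measure normalization is sensible but does not change the argument, which matches the paper's derivation step for step.
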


\section{Proof of Theorems \ref{orthogonality-full-spectrum}, \ref{large-sieve} and \ref{second-moment}}

\subsection{Auxiliary lemmata and proof of Theorems \ref{orthogonality-full-spectrum} and \ref{large-sieve}}
We start by recalling an \emph{approximate subgroup} $K_0(X,\tau)$ of $G$, as in \cite{JN}, where $X>1$ tending off to infinity and $\tau>0$ is sufficiently small but fixed. From here on we will, as usual in analytic number theory, not distinguish between $\tau$ and a fixed constant multiple for it.
\begin{equation}\label{defn-congruence-subgroup}
    K_0(X,\tau):=
    \mathrm{Im}_{G}\left\{\begin{pmatrix}
    a&b\\c&d
    \end{pmatrix}\in\mathrm{SL}_{r}(\R)\middle|
    a \in \GL_{r-1}(\R),
    d \in \GL_1(\R),
    \begin{aligned}
    &|a - 1_{r-1}| < \tau,
    \quad
    |b|<\tau, \\
    &|c|<\frac{\tau}{X},
    \quad
    |d-1|<\tau
    \end{aligned}
    \right\}.
\end{equation}
Here, various $|.|$ denote arbitrary fixed
norms on the corresponding spaces of matrices.
Let $f_X$ be a smoothened $L^1$-normalized non-negative characteristic function of $f_X$ and $F_X$ be the self-convolution of $f_X$ as in Lemma \ref{well-defined-bessel}. We abbreviate $J_{F_X}$ and $W_{F_X}$ as $J_X$ and $W_X$, respectively (see \eqref{bessel-distribution} and \eqref{psi-average}). 

\begin{lemma}\label{spectral-weight-property}
The function $J_X$ as in \eqref{bessel-distribution} is non-negative. For $\pi\in \hat{\X}_\gen$ cuspidal if the analytic conductor $C(\pi)$ is smaller than $X$ then $J_X(\pi)\gg1$.
\end{lemma}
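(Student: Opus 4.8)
### Proof strategy for Lemma \ref{spectral-weight-property}

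The plan is to trace the definition of $J_X = J_{F_X}$ through Lemma \ref{well-defined-bessel} and exploit the fact that $F_X$ is a self-convolution. Recall that $f_X$ is the $L^1$-normalized smoothened characteristic function of $K_0(X,\tau)$ and $F_X(g)=\int_G f_X(gh)\overline{f_X(h)}\,dh$. By the second assertion of Lemma \ref{well-defined-bessel}, for any generic irreducible $\pi$ and any orthonormal basis $\B(\pi)$ of (the Whittaker model of) $\pi$,
\begin{equation*}
J_X(\pi) = \sum_{W\in \B(\pi)} |\pi(f_X)W(1)|^2 \ge 0,
\end{equation*}
which immediately gives non-negativity. (One should note that $J_{\bar F}$ appears in Proposition \ref{kuznetsov} and $\bar{f_X}=f_X$ since $f_X\ge 0$, so there is no conflict; $J_X=J_{\bar{F}_X}$.)

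For the lower bound when $\pi$ is cuspidal with $C(\pi)<X$: the key input is the analytic newvector theory of \cite{JN}. By \cite[Theorem 1, Theorem 2]{JN}, for such $\pi$ (more precisely for its archimedean component, which is a nice enough representation of $\PGL_r(\R)$), there is a unit vector $W_0\in \W(\pi,\psi)$ — the analytic newvector — which is approximately invariant under $K_0(X,\tau)$ in the sense that $\pi(f_X)W_0$ is close to $W_0$, and crucially the Whittaker value $W_0(1)$ is bounded below (this is exactly the archimedean analogue of the non-vanishing of the newvector at the identity). Extend $W_0$ to an orthonormal basis $\B(\pi)$ containing $W_0$; then
\begin{equation*}
J_X(\pi) = \sum_{W\in \B(\pi)}|\pi(f_X)W(1)|^2 \ge |\pi(f_X)W_0(1)|^2.
\end{equation*}
Since $\pi(f_X)W_0$ is within $o(1)$ of $W_0$ in the $\|\cdot\|_{\W(\pi,\psi)}$-norm and since evaluation at $1$ is a continuous functional whose relevant norm is controlled uniformly for $C(\pi)<X$ (again a statement established in \cite{JN}, via the local Whittaker estimates there), we get $|\pi(f_X)W_0(1)| \gg |W_0(1)| \gg 1$, hence $J_X(\pi)\gg 1$.

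The main obstacle — or rather the place where all the real content sits — is verifying that the approximate-invariance and Whittaker-non-vanishing statements of \cite{JN} apply with the conductor threshold $X$ matching the $1/X$ scaling in the bottom-left block of $K_0(X,\tau)$ in \eqref{defn-congruence-subgroup}, and that the implied constants in "$\pi(f_X)W_0 \approx W_0$" and "evaluation at $1$ is bounded" are uniform over all cuspidal $\pi$ with $C(\pi)<X$ rather than depending on $\pi$ individually. This is precisely the content of the analytic newvector theorems of \cite{JN}, so the proof here amounts to citing those results with the correct normalization of $\tau$ and checking the matching of conductor parameters; no genuinely new estimate is needed beyond bookkeeping the constants.
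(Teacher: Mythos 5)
Your overall strategy matches the paper's: non-negativity from Lemma \ref{well-defined-bessel}, and the lower bound via the analytic newvector of \cite{JN}, extended to an orthonormal basis and with all but one summand dropped. However, there is a genuine gap in the mechanism you use to conclude $\pi(f_X)W_0(1)\gg 1$.

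You argue that $\pi(f_X)W_0$ is close to $W_0$ in the $\W(\pi,\psi)$-norm and that ``evaluation at $1$ is a continuous functional whose relevant norm is controlled uniformly for $C(\pi)<X$.'' This step does not work. Evaluation at the identity on a Whittaker (or Kirillov) model equipped with the $L^2$-inner product \eqref{defn-inner-prod} is not a bounded linear functional: it is pointwise evaluation of an $L^2$-function, so it is only densely defined, and its ``norm'' is not controlled --- let alone uniformly in $\pi$. Indeed, for spherical unit vectors $W(1)$ is exponentially small in the spectral parameter, and in general one should expect $|W(1)|/\|W\|$ to vary wildly with $C(\pi)$. Nor does \cite{JN} establish such a bound; what \cite{JN} actually provides (Theorem 7 there) is a \emph{pointwise} approximate invariance: for every $\epsilon>0$ there is $\tau>0$ such that $|W(g)-W(1)|<\epsilon$ for all $g\in K_0(C(\pi),\tau)$, together with $W(1)\gg 1$. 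This pointwise statement is strictly stronger than $L^2$-approximation and is precisely what makes the argument go through: since $f_X\ge 0$ is $L^1$-normalized and supported in $K_0(X,\tau)\subseteq K_0(C(\pi),\tau)$ when $C(\pi)<X$, one has
\begin{equation*}
|\pi(f_X)W(1)-W(1)|=\Bigl|\int_{K_0(X,\tau)}f_X(g)\bigl(W(g)-W(1)\bigr)\,dg\Bigr|\le\epsilon,
\end{equation*}
and then $\pi(f_X)W(1)\ge W(1)-\epsilon\gg 1$. Replacing your ``$L^2$-closeness plus continuity of evaluation'' step with this direct pointwise estimate repairs the proof; everything else you wrote (non-negativity, choice of basis containing $W_0$, dropping the other terms) is correct and identical in spirit to the paper.
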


\begin{proof}
Non-negativity of $J_X(\pi)$ follows from Lemma \ref{well-defined-bessel}. Let $\pi$ be cuspidal. We recall \cite[Theorem 7]{JN}\footnote{Actually implication of \cite[Theorem 7]{JN}, as generic irreducible unitary cuspidal automorphic representations of $\PGL_r(\R)$ are $\theta$-tempered for $\theta<1/2-1/r^2+1$ \cite{MS}.}. For each $\epsilon>0$ there exists a $\tau>0$ such that for each generic irreducible unitary cuspidal automorphic representation $\pi$ of $G$ there exists an \emph{analytic newvector} $W\in \pi$ with
\begin{itemize}
    \item $|W(g)-W(1)|< \epsilon$ for all $g\in K_0(C(\pi),\tau)$,
    \item and $W(1)\gg 1$.
\end{itemize}
Thus if $C(\pi)<X$, i.e. $K_0(C(\pi),\tau)\supseteq K_0(X,\tau)$,
$$|\pi(f_X)W(1)-W(1)|=|\int_{K_0(X,\tau)} f_X(g)(W(g)-W(1))dg|\le \epsilon,$$
hence $\pi(f_X)W(1)\gg 1$.

We choose an orthonormal basis $\B(\pi)$ containing the above analytic newvector $W$ of $\pi$. Thus by dropping all but the summand corresponding to $W$ in the expression of $J_X(\pi)$ as in Lemma \ref{well-defined-bessel} we conclude that
$J_X(\pi)\gg 1$.
\end{proof}

\begin{lemma}\label{support-gamma}
Let $X>1$ be large enough and $\tau>0$ be sufficiently small but fixed. Let $m,n\in \N^{r-1}$ and $x_1,x_2\in [N]$. Let $\gamma\in \Gamma$ such that $\tilde{m}x_1\gamma x_2\tilde{n}^{-1}\in K_0(X,\tau)$. If 
$$\min(n_{r-1}^{r-1}\dots n_1m_1^{r-1}\dots m_{r-1},m_{r-1}^{r-1}\dots m_1 n_1^{r-1}\dots n_{r-1})\ll X^r$$
with sufficiently small implied constant (depending at most on $\tau$) then the last row of $\gamma$ is $(0,\dots,0,1)$.
\end{lemma}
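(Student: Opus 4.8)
The plan is to follow the last row of $M:=\tilde m x_1\gamma x_2\tilde n^{-1}$. Since $\tilde m$ is diagonal with last diagonal entry $1$ and $x_1$ is upper unitriangular, the last row of $\tilde m x_1$ is $(0,\dots,0,1)$; hence the last row of $M$ depends on $\gamma$ only through its last row $(\gamma_{r,1},\dots,\gamma_{r,r})\in\Z^r$. A direct computation, using that $x_2$ is upper unitriangular and $\tilde n^{-1}=\diag((n_1\cdots n_{r-1})^{-1},\dots,n_1^{-1},1)$, shows that for $1\le j\le r-1$ the $j$-th entry of the last row of $M$ is $M_{r,j}=(n_1\cdots n_{r-j})^{-1}\big(\gamma_{r,j}+\sum_{i<j}\gamma_{r,i}(x_2)_{i,j}\big)$, with the convention $n_1\cdots n_0:=1$.

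Next I would decode membership in $K_0(X,\tau)$: there is $\lambda\in\R^\times$ with $\lambda M\in\SL_r(\R)$ satisfying the inequalities in \eqref{defn-congruence-subgroup}, so comparing determinants gives $|\lambda|=(\det\tilde n/\det\tilde m)^{1/r}=(n_1^{r-1}\cdots n_{r-1})^{1/r}(m_1^{r-1}\cdots m_{r-1})^{-1/r}$, while the condition $|c|<\tau/X$ on the lower-left block reads $|\lambda M_{r,j}|<\tau/X$ for $1\le j\le r-1$. Choosing the representatives $x_1,x_2\in[N]$ in a fixed bounded fundamental domain for $\Gamma_N\backslash N$ (so all $(x_2)_{i,j}=O(1)$), one inverts the triangular relation above: by induction on $j$, $|\gamma_{r,j}|\ll_r\max_{1\le i\le j}(n_1\cdots n_{r-i})|M_{r,i}|\ll_r (n_1\cdots n_{r-1})\,\tau/(X|\lambda|)$, using that $n_i\ge1$ makes $n_1\cdots n_{r-i}$ decrease in $i$. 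Substituting the value of $|\lambda|$ and simplifying the resulting monomial in the $m_i,n_i$ yields
\[
|\gamma_{r,j}|\ \ll_r\ \frac{\tau}{X}\,\big(n_{r-1}^{r-1}\cdots n_1\cdot m_1^{r-1}\cdots m_{r-1}\big)^{1/r}\qquad(1\le j\le r-1).
\]
If the first argument of the minimum in the hypothesis is $\ll X^r$ with a small enough implied constant (in terms of $\tau$ and $r$), the right-hand side is $<1$, forcing $\gamma_{r,j}=0$ for all $j<r$; then $\det\gamma=\pm1$ forces $\gamma_{r,r}=\pm1$, and after replacing the integral representative $\gamma$ by $-\gamma$ if necessary (which leaves $\gamma\in\Gamma$ unchanged) the last row of $\gamma$ is $(0,\dots,0,1)$.

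To handle the other argument of the minimum I would run the same argument on $M^{-1}=\tilde n\, x_2^{-1}\gamma^{-1}x_1^{-1}\tilde m^{-1}$, which has the same shape with $(m,n,\gamma)$ replaced by $(n,m,\gamma^{-1})$ and with $x_1^{-1}$ (a bounded representative, since $x_1\in[N]$) playing the role of the right-hand unipotent $x_2$. For this one needs that $M\in K_0(X,\tau)$ implies $M^{-1}\in K_0(X,\tau)$: the $\SL_r(\R)$-representative of $M$ lies within $O(\tau)$ of the identity, inversion is Lipschitz there, and the block-inversion formula keeps the lower-left block of the inverse of size $O(\tau/X)$ — and, as throughout the paper, one absorbs the absolute constant into $\tau$. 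The previous paragraph applied to $M^{-1}$ then shows the last row of $\gamma^{-1}$, hence of $\gamma$, is $(0,\dots,0,\pm1)$ whenever $m_{r-1}^{r-1}\cdots m_1\cdot n_1^{r-1}\cdots n_{r-1}\ll X^r$, and one concludes as before. I expect the only delicate points to be the exponent bookkeeping in the monomial simplification (and the use of $m_i,n_i\ge1$ to see that the bound for $j\ge2$ is no worse than for $j=1$) together with the verification that $K_0(X,\tau)$ is stable under inversion up to enlarging $\tau$ by an absolute factor.
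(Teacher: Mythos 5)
Your proof is correct and follows essentially the same route as the paper: extract the last row of $\tilde m x_1\gamma x_2\tilde n^{-1}$, use the determinant normalization together with the $|c|<\tau/X$ condition to force the integer entries $\gamma_{r,j}$ ($j<r$) below $1$ in absolute value, and handle the second argument of the minimum by running the argument on the inverse. Your exponent bookkeeping matches the paper's quantity $N=(n_{r-1}^{r-1}\cdots n_1 m_1^{r-1}\cdots m_{r-1})^{1/r}$ exactly, and you correctly flag (and resolve) the one point the paper leaves implicit, namely that $K_0(X,\tau)$ is stable under inversion after enlarging $\tau$ by an absolute constant.
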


\begin{proof}
We write $\gamma=\begin{pmatrix}a&b\\c&d\end{pmatrix}$ in $r-1,1$ block decomposition, i.e. $(c,d)$ is the bottom row of $\gamma$. Let $N:=(n_{r-1}^{r-1}\dots n_1m_1^{r-1}\dots m_{r-1})^{1/r}\ll X$ with sufficiently small constant. From the definition of $K_0(X,\tau)$ in \eqref{defn-congruence-subgroup} we obtain that the last row of $\det(\tilde{m}\tilde{n}^{-1})^{1/r}\tilde{m}x_1\gamma x_2\tilde{n}^{-1}$
$$|\det(\tilde{m}\tilde{n}^{-1})^{1/r}(Xc,d)x_2\tilde{n}^{-1}-(0,1)| < \tau.$$
Thus $|c|\ll \tau N/X\ll \tau$ with a sufficiently small implied constant. As coordinates of $c$ are integers for sufficiently small $\tau$ we get $c=0$, consequently, $d=1$, as $\gamma$ lies in $\PGL_r(\Z)$.  

However, $\tilde{n}x_2^{-1}\gamma^{-1}x_1^{-1}\tilde{m}^{-1}\in K_0(X,\tau)$. Interchanging $m$ and $n$ in the above argument we obtain that if $m_{r-1}^{r-1}\dots m_1 n_1^{r-1}\dots n_{r-1}\ll X^r$ with sufficiently small implied constant the last row of $\gamma^{-1}$ is $(0,\dots,0,1)$, hence the same holds for $\gamma$ and we conclude the proof.
\end{proof}

\begin{lemma}\label{error-term-vanish}
Let $m,n,x_1,x_2$ be as in Lemma \ref{support-gamma}. Then
$$\sum_{\gamma\in\Gamma-\Gamma_N}\int_{[N]^2}F_X(\tilde{m}x_1\gamma x_2\tilde{n}^{-1})\overline{\psi_m(x_1)\psi_n(x_2)}dx_1dx_2=0,$$
where $F_X$ is as defined after \eqref{defn-congruence-subgroup}.
\end{lemma}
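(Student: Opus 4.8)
The plan is to pass to the geometric side of the Kuznetsov formula exactly as in the derivation of Proposition~\ref{kuznetsov}, and then exploit Lemma~\ref{support-gamma} together with Friedberg's classification (Lemma~\ref{kloosterman-support}).

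First I record the relevant support statement. Since $f_X$ is supported in $K_0(X,\tau)$ and $F_X$ is its self-convolution, $F_X$ is supported in $K_0(X,\tau)K_0(X,\tau)$, which by the approximate-subgroup property of $K_0$ from \cite{JN} lies in $K_0(X,\tau')$ for $\tau'$ a fixed multiple of $\tau$; absorbing $\tau'$ into $\tau$ as usual, $\mathrm{supp}(F_X)\subseteq K_0(X,\tau)$, and likewise $W_{F_X}$ from \eqref{psi-average} is supported in $N\cdot K_0(X,\tau)$. Next, carrying out the Bruhat decomposition of the $\gamma$-sum precisely as in the computation preceding Proposition~\ref{kuznetsov} (folding the $x_1$-integral and decomposing $\Gamma-\Gamma_N$ into Bruhat cells), the left-hand side of the asserted identity equals
\[
\sum_{1\neq w\in W}\frac{\delta^{1/2}(\tilde n)}{\delta^{1/2}(\tilde m)\,\delta_w(\tilde n)}\sum_{c\in\Z^{r-1}_{\neq 0}}S_w(m,n;c)\int_{N_w}W_{F_X}\big(\tilde m c^{*}w\tilde n^{-1}x\big)\overline{\psi(x)}\,dx,
\]
so it suffices to show each summand vanishes. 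By Lemma~\ref{kloosterman-support}, $S_w(m,n;c)=0$ unless $w$ is block-anti-diagonal, so it is enough to treat admissible $w\neq 1$ and to show that for such $w$ the inner integral vanishes whenever $S_w(m,n;c)\neq 0$.

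So fix an admissible $w\neq 1$ with block sizes $d_1+\dots+d_k=r$, $k\geq 2$, and suppose $W_{F_X}(\tilde m c^{*}w\tilde n^{-1}x)\neq 0$ for some $x\in N_w$ and some $c$ with $S_w(m,n;c)\neq 0$. By the support of $W_{F_X}$ there is $y\in N$ with $y\,\tilde m c^{*}w\tilde n^{-1}x\in K_0(X,\tau)$. Since $S_w(m,n;c)\neq 0$ there is a Bruhat representative $\gamma=b_1 c^{*}w b_2\in\Gamma_w$; writing $c^{*}w=b_1^{-1}\gamma b_2^{-1}$ and commuting the diagonal matrices $\tilde m,\tilde n$ past the unipotent factors one rewrites $y\,\tilde m c^{*}w\tilde n^{-1}x$ in the form $\tilde m\,u_1\gamma\,u_2\,\tilde n^{-1}$ with $u_1,u_2\in N$, so that $\tilde m u_1\gamma u_2\tilde n^{-1}\in K_0(X,\tau)$. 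Now I apply Lemma~\ref{support-gamma}: although $u_1,u_2$ need not lie in a bounded fundamental domain, the proof of that lemma uses only the last row of the matrix --- which does not involve $u_1$ --- and deduces $c=0$ there by a column-by-column descent that never uses boundedness of $u_2$, the hypothesis $\min(\dots)\ll X^r$ with sufficiently small implied constant (inherited here) being exactly what powers that descent. Hence the last row of $\gamma$ is $(0,\dots,0,1)$. But any element of the Bruhat cell $\Gamma_w$ of an admissible $w$ has its last row with leading nonzero entry in column $d_k$, and $d_k=r$ forces $k=1$, i.e.\ $w=1$ --- a contradiction. Therefore the integrand vanishes identically, each $(w,c)$-summand is $0$, and the error term is $0$.

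The main obstacle is the middle of the last paragraph: one must be confident that the conclusion of Lemma~\ref{support-gamma} survives when the unipotent variables $u_1,u_2$ are unbounded (which is forced here because $x\in N_w$ ranges over a non-compact group), so I would re-examine the proof of Lemma~\ref{support-gamma} and isolate the purely bottom-row, column-by-column part, which is insensitive to boundedness. A secondary bookkeeping point --- already subsumed in the derivation of Proposition~\ref{kuznetsov} but worth a remark if one argues directly from $\sum_{\gamma\in\Gamma-\Gamma_N}$ --- is the cell $w=1$: a $\gamma$ that is upper triangular but not in $\Gamma_N$ carries a nontrivial diagonal sign, so the diagonal of $\tilde m x_1\gamma x_2\tilde n^{-1}$ would be a nontrivial sign change of a positive vector and could never lie near $1_r$ inside $K_0(X,\tau)$; hence such $\gamma$ contribute nothing either.
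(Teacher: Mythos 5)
Your proof is correct and follows the same skeleton as the paper's: pass to the Bruhat/Kloosterman form, invoke Lemma \ref{kloosterman-support} to discard the non-admissible cells, and kill the admissible cells with $w\neq 1$ by combining the shape of the last row of $\gamma\in\Gamma_w$ with the support of $F_X$. The one structural difference is \emph{where} you apply the support argument: you do it after unfolding, so the unipotent variables range over all of $N$ and you are forced to strengthen Lemma \ref{support-gamma} to unbounded $u_1,u_2$. Your proposed fix is sound --- once the smallness of the first $j-1$ entries of the bottom row has forced $c_1=\dots=c_{j-1}=0$, the cross terms $c_i(u_2)_{ij}$ drop out no matter how large $u_2$ is, so the descent gives $c=0$ without any boundedness, and the same applies to the $\gamma^{-1}$ step --- but note that this is \emph{not} how the paper's proof of Lemma \ref{support-gamma} actually runs (it inverts $x_2$ and genuinely uses $x_2\in[N]$), so you would have to write out the strengthened lemma yourself. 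The paper sidesteps the issue entirely: it uses the unfolded form only to conclude that the non-admissible cells contribute $0$, and then applies Lemma \ref{support-gamma} to the original, un-unfolded contribution $\sum_{\gamma\in\Gamma_w}\int_{[N]^2}F_X(\tilde m x_1\gamma x_2\tilde n^{-1})\cdots$ of each admissible cell, where $x_1,x_2\in[N]$ are bounded and the lemma applies verbatim. So your ``main obstacle'' is real for your formulation but avoidable by reordering the two steps.
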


\begin{proof}
From (the proof of) Proposition \ref{kuznetsov} we obtain that
\begin{multline*}
    \sum_{\gamma\in\Gamma-\Gamma_N}\int_{[N]^2}F_X(\tilde{m}x_1\gamma x_2\tilde{n}^{-1})\overline{\psi_m(x_1)\psi_n(x_2)}dx_1dx_2\\
    =\sum_{1\neq w\in W}\frac{\delta^{1/2}(\tilde{n})}{\delta^{1/2}(\tilde{m})\delta_w(\tilde{n})}\sum_{c\in \Z^{r-1}_{\neq 0}}S_w(m,n;c)\int_{N_w}W_X(\tilde{m}c^*w\tilde{n}^{-1}x)\overline{\psi(x)}dx.
\end{multline*}
Hence, from Lemma \ref{kloosterman-support} we conclude that it is enough to consider $\gamma\in \Gamma-\Gamma_N$ in the Bruhat cell attached to the Weyl elements $w$ of the form $\begin{pmatrix}&&I_{d_1}\\&\iddots&\\I_{d_k}&&\end{pmatrix}$ with $d_k<r$, which implies that the last row of $\gamma$ is not of the form $(0,\dots,0,1)$. Therefore, from Lemma \ref{support-gamma} we obtain the support condition that $\tilde{m}x_1\gamma x_2\tilde{n}^{-1}\notin K_0(X,\tau)$ for small enough $\tau$. We conclude by noting that the support of $F_X$ is $K_0(X,\tau)$, which follows from the definition of $F_X$. 
\end{proof}

\begin{proof}[Proof of Theorem \ref{orthogonality-full-spectrum}]
Proposition \ref{kuznetsov} and Lemma \ref{error-term-vanish} imply that
$$\int_{\hat{\X}_\gen}\frac{\lambda_\pi(m)\overline{\lambda_\pi(n)}}{\ell(\pi)}J_X(\pi)d\mu_\aut(\pi)=\delta_{m=n}W_X(1).$$
Choosing $m=n=(1,\dots,1)$ in the Proposition \ref{kuznetsov} we obtain
$$\int_{\hat{X}_\gen}J_X(\pi)d\mu_\aut(\pi)=W_X(1).$$
Finally noting that $$W_X(1)\asymp\mathrm{vol}(K_0(X,\tau))^{-1}\asymp X^{r-1},$$
we conclude the proof.
\end{proof}

\begin{proof}[Proof of Theorem \ref{large-sieve}]
Using Lemma \ref{spectral-weight-property}, \eqref{defn-ell-pi}, and artificially adding the continuous spectrum we write
$$\sum_{\substack{{C(\pi)<X}\\{\pi\,\,\mathrm{cuspidal}}}}{L(1,\pi,\Ad)^{-1}}\left|\sum_{N\ll X}\alpha(n)\lambda_\pi(n)\right|^2\ll \int_{\hat{\X}_\gen}\left|\sum_{N\ll X}\alpha(n)\lambda_\pi(n)\right|^2\frac{J_X(\pi)}{\ell(\pi)}d\mu_\aut(\pi).$$
We open the square above, let $M:=m_1,\dots m_{r-1}$ for $m\in \N^{r-1}$ to obtain the RHS is
$$\sum_{N,M\ll X}\overline{\alpha(m)}\alpha(n)\int_{\hat{\X}_\gen}\overline{\lambda_{\pi}(m)}\lambda_\pi(n)\frac{J_X(\pi)}{\ell(\pi)}d\mu_{\aut}(\pi).$$
Note that, $N,M\ll X$ with sufficiently small implied constant yields that
$$\min(n_{r-1}^{r-1}\dots n_1m_1^{r-1}\dots m_{r-1},m_{r-1}^{r-1}\dots m_1 n_1^{r-1}\dots n_{r-1})\ll X^r$$
with sufficiently small implied constant. If not, there would be a sequence of $m,n$ such that
$$(m_1\dots m_{r-1} n_1\dots n_{r-1})^r\gg X^{2r},$$
with large implied constants, contradicting the assumptions on $M,N$. 

Thus we can apply Theorem \ref{orthogonality-full-spectrum} to obtain that
$$\sum_{\substack{{C(\pi)<X}\\{\pi\,\,\mathrm{cuspidal}}}}{L(1,\pi,\Ad)^{-1}}\left|\sum_{N\ll X}\alpha(n)\lambda_\pi(n)\right|^2\ll\sum_{N,M\ll X}\overline{\alpha(m)}\alpha(n)\delta_{m=n}W_X(1).$$
We conclude the proof recalling that $W_X(1)\asymp X^{r-1}$ from the proof of Theorem \ref{orthogonality-full-spectrum}.
\end{proof}

\subsection{Proof of Theorem \ref{second-moment}}
In this subsection we change the notation a bit. For $n\in\Z$ we will denote the Fourier coefficient $\lambda_\pi(n,1,\dots,1)$ by $\lambda_\pi(n)$. Following lemma is a standard exercise using the approximate functional equation of the $L$-value. We include the argument for the sake of completeness.

\begin{lemma}\label{approximate-functional-equation}
Let $\pi$ be cuspidal with $C(\pi)<X$. Then,
\begin{equation*}
    |L(1/2,\pi)|^2\ll_\epsilon X^{\epsilon}\int_{|t|\ll X^\epsilon}\left|\sum_{n\ll X^{1/2+\epsilon}}\frac{\lambda_\pi(n)}{n^{1/2+\epsilon+it}}\right|^2dt + O(X^{-A}),
\end{equation*}
for all large $A>0$.
\end{lemma}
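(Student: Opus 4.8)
The plan is a standard application of the approximate functional equation for $L(1/2,\pi)$, followed by a Mellin expansion of the smooth cutoff and Cauchy--Schwarz in the resulting vertical integral. Since $\pi$ is cuspidal for $\PGL_r(\Z)$ with $r\ge 2$, the function $L(s,\pi)$ is entire, and combining the functional equation \eqref{global-functional-equation} with the Stirling bound \eqref{bound-gamma-factor} yields an approximate functional equation of the usual shape
\[
L(1/2,\pi)=\sum_{n\ge 1}\frac{\lambda_\pi(n)}{n^{1/2}}\,V_1\!\Big(\frac{n}{C(\pi)^{1/2}}\Big)+\epsilon_\infty(\pi)\sum_{n\ge 1}\frac{\overline{\lambda_\pi(n)}}{n^{1/2}}\,V_2\!\Big(\frac{n}{C(\pi)^{1/2}}\Big),
\]
where $V_1,V_2$ are fixed smooth functions of rapid decay (independent of $\pi$), $|\epsilon_\infty(\pi)|=1$, and I have used $\lambda_{\tilde\pi}(n)=\overline{\lambda_\pi(n)}$. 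Because $C(\pi)<X$, each sum is, up to $O(X^{-A})$, supported on $n\le X^{1/2+\epsilon}$. By the triangle inequality, the elementary bound $(a+b)^2\ll a^2+b^2$, and the trivial polynomial-in-$X$ bound on the two Dirichlet polynomials, it then suffices to bound $\Big|\sum_{n\ge1}\lambda_\pi(n)n^{-1/2}V(n/C(\pi)^{1/2})\Big|^2$ for a single fixed rapidly decaying $V$; the $V_2$ term is handled identically after complex conjugation and the substitution $t\mapsto -t$ in the integral produced below.

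Next I would first extend the (rapidly decaying) smooth sum to the range $n\le X^{1/2+\epsilon}$, at the cost of an $O(X^{-A})$ error, and then substitute the Mellin representation $V(y)=\frac{1}{2\pi i}\int_{(\epsilon)}\widehat V(s)\,y^{-s}\,ds$ with $\widehat V$ of rapid vertical decay, obtaining
\[
\sum_{n\le X^{1/2+\epsilon}}\frac{\lambda_\pi(n)}{n^{1/2}}\,V\!\Big(\frac{n}{C(\pi)^{1/2}}\Big)=\frac{1}{2\pi i}\int_{(\epsilon)}\widehat V(s)\,C(\pi)^{s/2}\sum_{n\le X^{1/2+\epsilon}}\frac{\lambda_\pi(n)}{n^{1/2+s}}\,ds.
\]
Using a polynomial-in-$n$ bound for the Hecke eigenvalues (the bound towards the Ramanujan conjecture is far more than enough), the inner sum is $\ll X^{O(1)}$ on $\Re(s)=\epsilon$, so the $s$-contour may be truncated to $|\Im(s)|\le X^{\epsilon}$ with an error $O(X^{-A})$. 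Writing $s=\epsilon+it$ and noting $|C(\pi)^{s/2}|=C(\pi)^{\epsilon/2}\le X^{\epsilon/2}$, this becomes
\[
\frac{1}{2\pi}\int_{|t|\le X^{\epsilon}}\widehat V(\epsilon+it)\,C(\pi)^{(\epsilon+it)/2}\Big(\sum_{n\ll X^{1/2+\epsilon}}\frac{\lambda_\pi(n)}{n^{1/2+\epsilon+it}}\Big)\,dt+O(X^{-A}).
\]

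Finally I would apply Cauchy--Schwarz to the $t$-integral, using $\int_{\R}|\widehat V(\epsilon+it)|\,dt\ll 1$, $\sup_t|\widehat V(\epsilon+it)|\ll 1$, and $|C(\pi)^{(\epsilon+it)/2}|\le X^{\epsilon/2}$; the two resulting factors of $X^{\epsilon/2}$ combine to $X^{\epsilon}$, giving
\[
\Big|\sum_{n\le X^{1/2+\epsilon}}\frac{\lambda_\pi(n)}{n^{1/2}}\,V\!\Big(\frac{n}{C(\pi)^{1/2}}\Big)\Big|^2\ll_\epsilon X^{\epsilon}\int_{|t|\ll X^{\epsilon}}\Big|\sum_{n\ll X^{1/2+\epsilon}}\frac{\lambda_\pi(n)}{n^{1/2+\epsilon+it}}\Big|^2\,dt+O(X^{-A}),
\]
and combining this with the reduction from the first paragraph proves the lemma. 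The only points requiring a little care are that the length $X^{1/2+\epsilon}$ of the final Dirichlet polynomial be taken independent of $\pi$ --- which is why one first extends the smooth sum beyond its effective support, using $C(\pi)<X$ --- and that the two truncations (the tail of the smooth sum and the tail of the Mellin integral) yield genuine $O(X^{-A})$ errors, which follows from the rapid decay of $V$ and $\widehat V$ against the polynomial-in-$X$ bound for the truncated Dirichlet polynomials. There is no substantial obstacle here; the lemma is recorded only for completeness, and the output is a Dirichlet polynomial of $\pi$-independent length ready for Theorem \ref{large-sieve}.
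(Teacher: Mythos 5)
Your proposal is correct and follows essentially the same approach as the paper: write $L(1/2,\pi)$ via the approximate functional equation, truncate the Dirichlet sums to length $X^{1/2+\epsilon}$ using $C(\pi)<X$, pass to a Mellin/vertical-line integral truncated to $|t|\ll X^\epsilon$, and finish with Cauchy--Schwarz. One small imprecision worth noting: $V_1,V_2$ are not literally ``independent of $\pi$'' (they involve the archimedean $\gamma$-factors of $\pi$); what the argument needs, and has via the Stirling bound \eqref{bound-gamma-factor}, is that the bounds on their Mellin transforms are \emph{uniform} over all $\pi$ with $C(\pi)<X$ --- the paper secures this explicitly with the Harcos-style function $H_X(s,\pi)=\tfrac12 X^{s/2}+\tfrac12 X^{-s/2}\gamma_\infty(1/2,\pi)\gamma_\infty(1/2-s,\tilde\pi)$, but the underlying computation is the same.
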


\begin{proof}
We start by proving approximate functional equation as in \cite{H}. We define
$$H_X(s,\pi):=\frac{1}{2}X^{s/2}+\frac{1}{2}X^{-s/2}\gamma_\infty(1/2,\pi)\gamma_\infty(1/2-s,\tilde{\pi}).$$
Note that, $H_X(s,\pi)$ is holomorphic as $\Re(s)>0$ and $H_X(0,\pi)=1$.
From \eqref{global-functional-equation} we can write the global functional equation as
$$L(1/2+s,\pi)H_X(s,\pi)=\gamma_\infty(1/2,\pi)L(1/2-s,\tilde{\pi})H_X(-s,\tilde{\pi}).$$
The estimate in \eqref{bound-gamma-factor} implies that
$$H_X(s,\pi)\ll_{\Re(s)} X^{\Re(s)/2}\left(1+\left({C(\pi)}/{X}\right)^{\Re(s)}(1+|s|)^{O(1)}\right),$$
We choose an entire function $h$ with $h(0)=1$ and $h(s)=h(-s)$. Note for $\pi$ cuspidal that $L(1/2+s,\pi)H_X(s,\pi)$ is entire. Then by Cauchy's theorem, and applying $s\mapsto-s$ and the above version of the global functional equation in the second equality we get
\begin{align*}
    L(1/2,\pi)
    &=\int_{(1)}L(1/2+s,\pi)H_X(s,\pi)h(s)\frac{ds}{s}-\int_{(-1)}L(1/2+s,\pi)H_X(s,\pi)h(s)\frac{ds}{s}\\
    &=\int_{(1)}L(1/2+s,\pi)H_X(s,\pi)h(s)\frac{ds}{s}+\gamma_\infty(1/2,\pi)\int_{(1)}L(1/2+s,\tilde{\pi})H_X(s,\tilde{\pi})h(s)\frac{ds}{s}\\
    &=\sum_{n}\frac{\lambda_\pi(n)}{\sqrt{n}}V_1(n)+\gamma_{\infty}(1/2,\pi)\sum_{n}\frac{\overline{\lambda_\pi(n)}}{\sqrt{n}}V_2(n),
\end{align*}
where in the last equality we have used the Dirichlet series of $L(s,\pi)$ as in \S\ref{l-function-condutor}. Here
$$V_1(n)=V_1(n;X,\pi):=\int_{(1)}n^{-s}H_X(s,\pi)h(s)\frac{ds}{s},$$
and $V_2$ has a similar definition. 

Let $C(\pi)<X$ from now on. Thus $H_X(s,\pi)\ll X^{\Re(s)/2}(1+|s|)^{O(1)}$ We first claim that the above sums can be truncated at $n\le X^{1/2+\epsilon}$ with an error $O(X^{-A})$ for all large $A$. To see that we shift the contour in the defining integral of $V_1$ to $K$ for some large $K>0$. Thus we obtain 
$$V_1(n)\ll_K n^{-K}X^{K/2}.$$
Consequently, using $\lambda_\pi(n)\ll n^{c}$ for some fixed $c$ (follow from e.g. \cite{JS}),
$$\sum_{n> X^{1/2+\epsilon}}\frac{\lambda_\pi(n)}{\sqrt{n}}V_1(n)\ll_K X^{K/2}\sum_{n>X^{1/2+\epsilon}}n^{-1/2-K+c}\ll_B X^{-B},$$
for all large $B$. 

We also note that, for $\epsilon>0$ small,
\begin{align*}
    V_1(n)
    &=\int_{(\epsilon)}n^{-s}H_X(s,\pi)h(s)\frac{ds}{s}\\
    &=\int_{|t|< X^{\epsilon}}n^{-\epsilon-it}H_X(\epsilon+it,\pi)h(\epsilon+it)\frac{dt}{\epsilon+it}+O\left(n^{-\epsilon} X^{\epsilon/2}\int_{|t|\ge X^\epsilon}|t|^{-K}dt\right)\\
    &=\int_{|t|< X^{\epsilon}}n^{-\epsilon-it}H_X(\epsilon+it,\pi)h(\epsilon+it)\frac{dt}{\epsilon+it}+O(n^{-\epsilon}X^{-B}).
\end{align*}
But,
\begin{align*}
    \sum_{n\ll X^{1/2+\epsilon}}\frac{\lambda_\pi(n)}{n^{1/2+\epsilon}}\ll X^{(1/2+\epsilon)(1/2+c-\epsilon)}.
\end{align*}
Thus we obtain that
$$\sum_{n}\frac{\lambda_\pi(n)}{\sqrt{n}}V_1(n)=\int_{|t|\ll X^{\epsilon}}\sum_{n\ll X^{1/2+\epsilon}}\frac{\lambda_\pi(n)}{n^{1/2+\epsilon+it}}H_X(\epsilon+it,\pi)\frac{h(\epsilon+it)}{\epsilon+it}dt+O(X^{-B}),$$
for all large $B$.

We can do similar analysis for $V_2$ summand. Thus using that $|\gamma_\infty(1/2,\pi)|=1$ we get
\begin{equation*}
    |L(1/2,\pi)|^2\ll \left|\int_{|t|\ll X^{\epsilon}}\sum_{n\ll X^{1/2+\epsilon}}\frac{\lambda_\pi(n)}{n^{1/2+\epsilon+it}}H_X(\epsilon+it,\pi)\frac{h(\epsilon+it)}{\epsilon+it}dt\right|^2+O(X^{-B}).
\end{equation*}
Finally, note that
$$\int_{|t|\ll X^\epsilon}\left|H_X(\epsilon+it,\pi)\frac{h(\epsilon+it)}{\epsilon+it}\right|^2dt\ll X^\epsilon.$$
Thus doing a Cauchy-Schwarz in the first term of the estimate of $|L(1/2,\pi)|^2$ above ,we conclude the proof.
\end{proof}

\begin{proof}[Proof of Theorem \ref{second-moment}]
Lemma \ref{approximate-functional-equation} and \cite[Theorem 3]{JN} imply that
$$\sum_{\substack{{C(\pi)<X}\\{\pi\,\,\mathrm{cuspidal}}}}\frac{|L(1/2,\pi)|^2}{L(1,\pi,\Ad)}\ll_\epsilon X^\epsilon\int_{|t|\ll X^\epsilon}\sum_{\substack{{C(\pi)<X}\\{\pi\,\,\mathrm{cuspidal}}}}L(1,\Ad,\pi)^{-1}\left|\sum_{n\ll X^{1/2+\epsilon}}\frac{\lambda_\pi(n)}{n^{1/2+\epsilon+it}}\right|^2dt+O(X^{-A}).$$
Using Lemma \ref{large-sieve} with $\alpha(n,1,\dots,1)=n^{-1/2-\epsilon-it}$, for $n\ll X^{1/2+\epsilon}$ and $\alpha=0$, otherwise, we obtain that the first sum in the RHS is bounded by $X^{r-1+\epsilon}$ and thus we conclude.
\end{proof}

\section{Proof of Theorems \ref{sato-tate} and \ref{density}}

\subsection{Fourier coefficients for \texorpdfstring{$\PGL(r)$}{}} Multiplicative properties of the Fourier coefficients of automorphic forms for $\PGL_r(\Z)$ can be understood in terms of the standard character theory of $\mathrm{SU}(r)$ by the work of Shintani and Casselman--Shalika.
It is known that the irreducible representations of $\SU(r)$ are in bijection with the dominant elements (i.e. elements with non-increasing coordinates) of $\Z^{r}/\Z(1,\dots,1)$ which are in bijection with the elements of $\Z_{\ge 0}^{r-1}$ by the map
$$\iota:\Z^r/\Z(1,\dots,1)\ni \alpha\mapsto (\alpha_1-\alpha_2,\dots,\alpha_{r-1}-\alpha_{r})\in\Z^{r-1}_{\ge 0}.$$
For $\alpha\in \Z^{r-1}_{\ge 0}$, let $\chi_\alpha$ be the character of the highest weight representation of $\SU(r)$ attached to $\alpha$, given by a Schur polynomial. The following Lemma, which is a theorem by Shintani and Casselman--Shalika \cite{Sh,CS}, gives a formula of the Fourier coefficients of $\pi$ in terms of the Satake parameters.

\begin{lemma}\label{casselman-shalika}
Let $\pi\in \hat{\X}_\gen$ with Satake parameters $\mu^p(\pi)$ at a finite prime $p$. Then
$$\chi_\alpha(\diag(p^{\mu^p(\pi)}))=\lambda_\pi(p^{\iota(\alpha)}),$$
where $\lambda_\pi$ is defined in \eqref{unipotent-integarl}.
\end{lemma}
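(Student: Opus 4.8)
The plan is to relate the Fourier coefficients $\lambda_\pi(p^{\iota(\alpha)})$ to characters of $\SU(r)$ by passing through the local Whittaker model at $p$ and applying the Casselman--Shalika formula, which gives an explicit expression for the values of the spherical (newvector) Whittaker function on the torus in terms of the Satake parameters. Concretely, since $\pi\in\hat\X_\gen$, the local component $\pi_p$ is an irreducible generic unramified representation of $\PGL_r(\Q_p)$, so it has a unique (up to scalar) spherical vector $v_p^\circ$, and the local Whittaker function $W_p^\circ$ attached to $v_p^\circ$, normalized by $W_p^\circ(1)=1$, is given on the diagonal torus element $\mathrm{diag}(p^{a_1},\dots,p^{a_r})$ (with $a_1\ge\cdots\ge a_r$) by
$$
W_p^\circ\big(\mathrm{diag}(p^{a_1},\dots,p^{a_r})\big)=\delta_B^{1/2}\big(\mathrm{diag}(p^{a})\big)\,\chi_{a}\big(\mathrm{diag}(p^{\mu^p(\pi)})\big),
$$
where $\chi_a$ is the Schur polynomial attached to the dominant weight $a\in\Z^r/\Z(1,\dots,1)$ — this is exactly the Casselman--Shalika formula \cite{CS,Sh}. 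The key bookkeeping step is then to match the arithmetic normalization of $\lambda_\pi$ coming from \eqref{unipotent-integarl} with this local formula: the global Whittaker function factors as a product of local ones, the factors at all places away from $p$ and $\infty$ at the relevant torus element are trivial (since we are evaluating at $\tilde m$ with $m=p^{\iota(\alpha)}$, which is a $p$-power), and the definition \eqref{unipotent-integarl} has precisely been arranged so that $\lambda_\pi(m)/\delta^{1/2}(\tilde m)$ equals the value $W_\varphi(\tilde m)/W_\varphi(1)$; unwinding the change of variables $\iota$ between $\alpha\in\Z^{r-1}_{\ge0}$ and the exponent vector, one checks that $\tilde m$ for $m=p^{\iota(\alpha)}$ is exactly $\mathrm{diag}(p^{a})$ for the dominant weight $a=\iota^{-1}(\alpha)$, so the $\delta^{1/2}$ factors cancel and one is left with $\lambda_\pi(p^{\iota(\alpha)})=\chi_\alpha(\mathrm{diag}(p^{\mu^p(\pi)}))$.

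First I would set up the tensor product factorization $\W(\pi,\psi)\cong\bigotimes_v\W(\pi_v,\psi_v)$ for the generic $\pi$, fix the spherical vectors at all finite places and note that the newvector $\varphi$ used to define $\lambda_\pi$ can be taken spherical at $p$ (indeed $\pi_p$ is unramified), so that $W_\varphi=W_\infty\otimes W_p^\circ\otimes\bigotimes_{q\neq p}W_q^\circ$ up to a global constant. Second I would evaluate both sides of \eqref{unipotent-integarl} at $g=1$ and at the torus element $\tilde m$, observing that $\tilde m$ is supported only at the place $p$ (its $q$-component is the identity in $\PGL_r(\Q_q)$ for $q\neq p$, and its archimedean component is also trivial because $m$ is a pure $p$-power up to units — here one uses the adelic–classical dictionary carefully), so the ratio reduces to $W_p^\circ(\tilde m_p)/W_p^\circ(1)$. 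Third I would invoke Casselman--Shalika to rewrite $W_p^\circ(\tilde m_p)$ and cancel the $\delta_B^{1/2}$, which is where the normalization in \eqref{unipotent-integarl} pays off.

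The main obstacle, or rather the only real content beyond citing \cite{CS,Sh}, is the careful matching of normalizations: tracking the modular character $\delta$ versus $\delta_B$ (the modular character of the Borel restricted to the torus), verifying that the combinatorial map $\iota$ sends $\alpha$ to exactly the right exponent vector so that $\tilde m=\mathrm{diag}(p^{a_1},\dots,p^{a_r})$ with $a=\iota^{-1}(\alpha)$ dominant, and confirming that the ``$\lambda_\pi(1,\dots,1)=1$'' normalization is consistent with $W_p^\circ(1)=1$. I would handle this by writing out $\tilde m$ explicitly, namely $\tilde m=\mathrm{diag}(m_1\cdots m_{r-1},\,m_1\cdots m_{r-2},\,\dots,\,m_1,\,1)$, substituting $m_i=p^{\alpha_i-\alpha_{i+1}}$ (with $\alpha_r=0$), and observing the exponents telescope to $(\alpha_1,\dots,\alpha_{r-1},0)$, which is the dominant representative of $\iota^{-1}(\alpha)$ in $\Z^r/\Z(1,\dots,1)$; the rest is a direct comparison of two closed-form expressions. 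No deep input is needed — the lemma is essentially a restatement of Casselman--Shalika in the coordinates fixed in \S\ref{automorphic-forms} and \S\ref{basic-notations}.
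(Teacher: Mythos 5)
The paper gives no proof of this lemma at all — it is stated as a theorem of Shintani and Casselman--Shalika and simply cited — so your proposal, which unpacks that citation via the local spherical Whittaker function and a normalization check, is exactly the intended argument and is essentially correct. One bookkeeping slip: with $m_i=p^{\alpha_i-\alpha_{i+1}}$ the entries of $\tilde m=\diag(m_1\cdots m_{r-1},\dots,m_1,1)$ telescope to the exponent vector $(\alpha_1-\alpha_r,\alpha_1-\alpha_{r-1},\dots,\alpha_1-\alpha_2,0)$, which modulo the center and the Weyl group is the dual dominant weight $-w_0\alpha$ rather than $(\alpha_1,\dots,\alpha_{r-1},0)$; this only shifts the answer by the standard contragredient/ordering conventions for $\lambda_\pi$ and the Satake parameters, but it is precisely the kind of normalization your ``direct comparison'' step needs to get right, so it should be redone carefully.
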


Lemma \ref{casselman-shalika} helps to linearize the monomials of Fourier coefficients. By decomposing the tensor product of the highest weight representations in a direct sum of irreducible representations attached to various $\alpha$ we can compute that for any multi-indices $\alpha^{(1)},\dots,\alpha^{(k)}\in \Z^r/\Z(1,\dots,1)$ and nonnegative integers $s_1,\dots,s_k$
we may write
$$\prod_{i}\chi_{\alpha^{i)}}^{s_i}=\sum_\beta c_\beta\chi_{\beta},$$
for some coefficients $c_\beta$. Correspondingly, using Lemma \ref{casselman-shalika}, we get
\begin{equation}\label{linearize-hecke-eigenvalue}
    \prod_i\lambda_\pi(p^{\iota(\alpha_i)})^{s_i}=\sum_\beta c_\beta\lambda(p^{\iota(\beta)}),
\end{equation}
where the sums in the RHS are finite.

\begin{proof}[Proof of Theorem \ref{sato-tate}] 
We proceed exactly as in \cite{Zh}. Using Lemma \ref{casselman-shalika} we can embed the Satake parameters into the space of Fourier coefficients by mapping under elementary symmetric polynomials: $$\rho:p^{\mu^p(\pi)}\mapsto (e_1(p^{\mu^p(\pi)}),\dots, e_{r-1}(p^{\mu^p(\pi)})),$$
where $e_j$ is the $j$'th elementary symmetric polynomial. It can be checked that
$$e_j(p^{\mu^p(\pi)})=\lambda_\pi(\underbrace{p,\dots,p}_{j \text{ times}},1\dots,1).$$
Thus we may identify the space of continuous functions on $T_1/W$ as the same on the image of $\rho$ as a compact subset of $\C^{r-1}$. Hence, to prove Theorem \ref{sato-tate}, it suffices to prove that for any continuous $\tilde{f}$ (e.g. composing $f$ with $\rho^{-1}$) on the image of $\rho$ one has
\begin{equation}\label{reduced-sato-tate}
    \lim_{X\to\infty}\frac{\int_{\hat{\X}_\gen}\tilde{f}(e_1(p^{\mu^p(\pi)}),\dots, e_{r-1}(p^{\mu^p(\pi)}))\frac{J_X(\pi)}{\ell(\pi)}d\mu_\aut(\pi)}{\int_{\hat{\X}_\gen}\frac{J_X(\pi)}{\ell(\pi)}d\mu_\aut(\pi)}\to\int_{T_0/W}\tilde{f}\circ \rho(z)d\mu_{\mathrm{st}}(z),
\end{equation}
We can approximate $\tilde{f}$, by Stone--Weierstrauss, using polynomials in $z:=(z_1,\dots,z_{r-1})$ and $\bar{z}$. Thus by linearity it is enough to prove \eqref{reduced-sato-tate} for $\tilde{f}=z^{\alpha}\bar{z}^{\beta}$ for some multi-indices $\alpha,\beta$. 

Now note that for any multi-indices $\alpha,\beta\in \Z^r/\Z(1,\dots,1)$, from Theorem \ref{orthogonality-full-spectrum} we obtain
\begin{equation*}
    \lim_{X\to\infty}\frac{\int_{\hat{\X}_\gen}\lambda_\pi(p^{\iota(\alpha)})\overline{\lambda_\pi(p^{\iota(\beta)})}\frac{J_X(\pi)}{\ell(\pi)}d\mu_\aut(\pi)}{\int_{\hat{\X}_\gen}\frac{J_X(\pi)}{\ell(\pi)}d\mu_\aut(\pi)}=\delta_{\iota(\alpha)=\iota(\beta)}=\int_{T_0/W}\chi_\alpha\overline{\chi_\beta}(z)d\mu_{\mathrm{st}}(z),
\end{equation*}
where the last equality follows by Schur's orthogonality of characters. Finally, using \eqref{linearize-hecke-eigenvalue} (and the corresponding relation for $\chi_\alpha$) we conclude \eqref{reduced-sato-tate}.
\end{proof}

\begin{proof}[Proof of Theorem \ref{density}]
We follow the proof of \cite[Theorem 1]{B2}. From \cite[Lemma 4]{B2} we get that for $n>r$ one has
\begin{equation}\label{amplifier}
    \sum_{j=0}^{r-1}|\lambda_\pi(p^{n-j},1,\dots,1)|^2\ge (2p^{\theta^p(\pi)})^{2(1-r)}p^{2n\theta^p(\pi)}.
\end{equation}
We choose $n$ large enough such that $p^n\asymp X$ with small enough implied absolute constant (admissible by Theorem \ref{orthogonality-full-spectrum}). Then for $\theta^p(\pi)>\eta$ we obtain from \eqref{amplifier} that
$$\sum_{j=0}^{r-1}|\lambda_\pi(p^{n-j},1,\dots,1)|^2\gg X^{2\eta}.$$
On the other hand,
$$|\{\pi\,\,\mathrm{cuspidal}, C(\pi)<X\mid \theta^p(\pi)> \eta\}|\ll X^{-2\eta}\sum_{\substack{{C(\pi)<X}\\{\pi\,\,\mathrm{cuspidal}}}}\sum_{j=0}^{r-1}|\lambda_\pi(p^{n-j},1,\dots,1)|^2.$$
Using the fact that (see \cite{Li})
\begin{equation}\label{bound-l-value-at-1}
    L(1,\pi,\Ad)\ll C(\pi)^\epsilon,
\end{equation}
adding the similar contributions from continuous spectrum, and using Lemma \ref{spectral-weight-property} we obtain that
$$|\{\pi\,\,\mathrm{cuspidal}, C(\pi)<X\mid \theta^p(\pi)> \eta\}|\ll_\epsilon X^{-2\eta+\epsilon}\sum_{j=0}^{r-1}\int_{\hat{\X}_\gen}|\lambda_\pi(p^{n-j},1,\dots,1)|^2\frac{J_X(\pi)}{\ell(\pi)}d\mu_\aut(\pi).$$
From Theorem \ref{orthogonality-full-spectrum} we conclude that the RHS is $\ll_\epsilon X^{r-1-2\eta+\epsilon}$.
\end{proof}

\section{Proof of Theorem \ref{low-lying-zero}}
We recall Conjecture \ref{conj-langlands} and Conjecture \ref{conj-cond-size}. Throughout this section we will assume these two conjectures. These are also assumptions in the statement of Theorem \ref{low-lying-zero}.

\subsection{Explicit formula}
Recall that ${^{L}{G}}=\SL_r(\C)$ and the $L$-group homomorphism $\rho:\SL_r(\C)\to \GL_d(\C)$ coming from a $d$-dimensional ($d>1$) irreducible representation. We abbreviate $\rho_*\pi$ as $\Pi$ for $\pi\in\hat{\X}_\gen$. Let $\{\mu^p(\Pi)\}_{i=1}^d$ be the local Satake (resp. Langlands) parameters for $p<\infty$ (resp. $p=\infty$).

We start by recalling the functional equation of $L(s,\Pi)$ as in \S\ref{l-function-condutor}. We write the global $L$-function
$$\Lambda(s,\Pi)=L(s,\Pi)L_{\infty}(s,\Pi),$$
where $L_\infty(s,\Pi):=\prod_{i=1}^d\Gamma_\R(1/2+s+\mu^\infty_i(\Pi))$. Then \eqref{global-functional-equation} reads as 
$$\Lambda(s,\Pi)=\epsilon_\infty(\Pi)\Lambda(1-s,\tilde{\Pi}).$$
Let $\psi_0$ be an entire function with rapid decay in the vertical strips. 
Using Cauchy's argument principle and the global functional equation we can write 
\begin{equation}\label{cauchy-argument-principle}
    \sum_{\gamma\text{ zeros}}\psi_0(\gamma)-\sum_{\tau\text{ poles}}\psi_0(\tau)=   \int_{(2)}\psi_0(s)\frac{\Lambda'}{\Lambda}(s,\Pi)ds+\int_{(2)}\psi_0(1-s)\frac{\Lambda'}{\Lambda}(s,\tilde{\Pi})ds,
\end{equation}
where zeros and poles are of $\Lambda(.,\Pi)$ in the critical strip, i.e. $\Re(s)\in [0,1]$, counted with multiplicity. 

Let $\Re(s)=2$. We employ the Euler product
$$L(s,\Pi)=\prod_{p}\prod_{j=1}^d(1-p^{\mu_j^p(\Pi)-s})^{-1},$$
to obtain that
$$\frac{\Lambda'}{\Lambda}(s,\Pi)=\sum_{j=1}^d\frac{\Gamma'_\R}{\Gamma_\R}(s+\mu_j^\infty(\Pi))-\sum_{p<\infty}\log p\sum_{j=1}^d\sum_{k=1}^\infty p^{k(\mu_j^p(\Pi)-s)}.$$
We exchange $\int_{(2)}$ and $\sum_{p<\infty}$ which is justified by the absolute convergence of the Euler product and the rapid decay of $\psi_0$ in the vertical direction. We then shift each contour to the vertical line $\Re(s)=1/2$. 
Thus
we obtain the RHS of \eqref{cauchy-argument-principle}
\begin{multline*}
\int_{(1/2)}\psi_0(s)\left(\sum_{j=1}^d\frac{\Gamma'_\R}{\Gamma_\R}(s+\mu_j^\infty(\Pi))-\sum_{p<\infty}\log p\sum_{j=1}^d\sum_{k=1}^\infty p^{k(\mu_j^p(\Pi)-s)}\right)ds\\+\int_{(1/2)}\psi_0(1-s)\left(\sum_{j=1}^d\frac{\Gamma'_\R}{\Gamma_\R}(s+\overline{\mu_j^\infty(\Pi)})-\sum_{p<\infty}\log p\sum_{j=1}^d\sum_{k=1}^\infty p^{k(\overline{\mu_j^p(\Pi)}-s)}\right)ds.
\end{multline*}
We use that the parameters of $\tilde{\Pi}$ are complex conjugate of parameters of $\Pi$.

Recall the average conductor $C_{\rho,X}$ and the test function $\psi$ from the statement of Theorem \ref{low-lying-zero}. We choose
$$\psi_0(s):=\psi\left((s-1/2)\frac{\log C_{\rho,X}}{2\pi i}\right).$$
This choice is admissible as $\psi$ is (the analytic continuation of) a Fourier transform of a smooth function supported on $[-\delta,\delta]$. Then 
$$\int_{(1/2)}\psi_0(s)x^{-s}ds=\frac{x^{-1/2}}{\log C_{\rho,X}}\hat{\psi}\left(\frac{\log x}{\log C_{\rho,X}}\right),\quad\int_{(1/2)}\psi_0(1-s)x^{-s}ds=\frac{x^{-1/2}}{\log C_{\rho,X}}\hat{\psi}\left(\frac{-\log x}{\log C_{\rho,X}}\right),$$
where $\hat{\psi}$ is the Fourier transform of $\psi$. Now
Langlands functoriality in Conjecture \ref{conj-langlands} implies that $\mu_j^\infty(\Pi)$ are away from the poles of $\frac{\Gamma'_\R}{\Gamma_\R}(s+\mu_j^\infty(\Pi))$.
We use Stirling's estimate for $s\in i\R$ to obtain
$$\frac{\Gamma_\R'}{\Gamma_\R}(1/2+s+\mu)=\frac{1}{2}\log|1/2+s+\mu|+O(1)=\log(1+|\mu|)+O(\log(1+|s|)),$$
uniformly in $\mu$. Thus
\begin{align*}\label{estimate-archimedean-part}
    \int_{(1/2)}\psi_0(s)\sum_{j=1}^d\frac{\Gamma'_\R}{\Gamma_\R}(s+\mu_j^\infty(\Pi))ds
    &=\frac{1}{2}\tilde{\psi}_0(1)\log C(\Pi)+O\left(\int_{\R}|{\psi}_0(1/2+it)|\log(1+|t|)dt\right)\nonumber\\
    &=\hat{\psi}(0)\frac{\log C(\Pi)}{2\log C_{\rho,X}}+O((\log X)^{-2}).
\end{align*}
Doing a similar computation we get
$$\int_{(1/2)}\psi_0(1-s)\sum_{j=1}^d\frac{\Gamma'_\R}{\Gamma_\R}(s+\overline{\mu_j^\infty(\Pi)})ds
=\hat{\psi}(0)\frac{\log C(\Pi)}{2\log C_{\rho,X}}+O((\log X)^{-2}).$$
We also define the moment of the Satake parameters by
\begin{equation*}
    \beta_k^p(\Pi):=\sum_{j=1}^dp^{k\mu_j^p(\Pi)},\quad \beta_k(\tilde{\Pi}):=\sum_{j=1}^dp^{k\overline{\mu_j^p(\Pi)}}=\overline{\beta_k^p(\Pi)}.
\end{equation*}
As before, we write zeros of $\Lambda(.,\Pi)$ as $1/2+i\gamma_\Pi$ and poles as $1/2+i\tau_\Pi$. We rewrite \eqref{cauchy-argument-principle} to obtain the \emph{explicit formula}
\begin{multline}\label{explicit-formula}
    \sum_{\gamma_\Pi}\psi\left(\gamma_\Pi\frac{\log C_{\rho,X}}{2\pi}\right)-\sum_{\tau_\Pi}\psi\left(\tau_\Pi\frac{\log C_{\rho,X}}{2\pi i}\right)=\hat{\psi}(0)\frac{\log C(\Pi)}{\log C_{\rho,X}}+O((\log X)^{-2})\\-\sum_{k=1}^\infty\sum_{p<\infty}\frac{p^{-k/2}\log p}{\log C_{\rho,X}}\left({\beta^p_k(\Pi)\hat{\psi}\left(\frac{k\log p}{\log C_{\rho,X}}\right)+\overline{\beta^p_k(\Pi)}}\hat{\psi}\left(\frac{-k\log p}{\log C_{\rho,X}}\right)\right),
\end{multline}

Note that, Langlands functoriality in Conjecture \ref{conj-langlands} implies that 
$$|p^{\mu_j^p(\Pi)}|=1.$$
to truncate the last $k$-sum at $k\le 2$ with an error of $O((\log X)^{-1})$. Finally, summing over the spectrum $\pi\in \hat{\X}_\gen$ in \eqref{explicit-formula} with the weight $\frac{J_X(\pi)}{\ell(\pi)}$ we obtain an expression for \emph{$1$-level zero density} statistic
\begin{multline}\label{statistic-expression}
    \left(D_{\rho,X}(\psi)-D^{\mathrm{pole}}_{\rho,X}(\psi)\right)\int_{\hat{\X}}\frac{J_X(\pi)}{\ell(\pi)}d\mu_\aut(\pi)=\hat{\psi}(0)\int_{\hat{\X}}\frac{J_X(\pi)}{\ell(\pi)}d\mu_\aut(\pi)\\-\frac{1}{\log C_{\rho,X}}\sum_{k=1}^{2}\sum_{p<\infty}\frac{\log p}{p^{k/2}}\hat{\psi}\left(\frac{k\log p}{\log C_{\rho,X}}\right)\int_{\hat{\X}_\gen}\beta^p_k(\rho_*\pi)\frac{J_X(\pi)}{\ell(\pi)}d\mu_\aut(\pi)\\-\frac{1}{\log C_{\rho,X}}\sum_{k=1}^{2}\sum_{p<\infty}\frac{\log p}{p^{k/2}}\hat{\psi}\left(\frac{-k\log p}{\log C_{\rho,X}}\right)\int_{\hat{\X}_\gen}\overline{\beta^p_k(\rho_*{\pi})}\frac{J_X(\pi)}{\ell(\pi)}d\mu_\aut(\pi)+O\left(\frac{X^{r-1}}{\log X}\right).
\end{multline}
The error bound follows from Theorem \ref{orthogonality-full-spectrum}.

\subsection{Moments of Satake parameters}
We use standard representation theory of $\SU(r)$ to understand the quantities $\beta^p_k(\rho_*\pi)$ and their averages over the representations $\pi$.
\begin{lemma}\label{moments-satake-parameter}
Let $\rho, \theta, \mathfrak{s}$ be as in Theorem \ref{low-lying-zero} and $k\in \N$.
There exist coefficients $c_\alpha$ with $\alpha:=(\alpha_1,\dots,\alpha_{r-1})\in \Z^{r-1}_{\ge 0}$ such that
$$\beta^p_k(\rho_*\pi)=c_0(k,\theta)+\sum_{0<\sum\alpha_i\le k(\theta_1-\theta_r)}c_\alpha(k,\theta)\lambda_\pi(p^\alpha),$$
such that $c_0(1,\theta)=0$ and $c_0(2,\theta)=\mathfrak{s}(\rho)$.
\end{lemma}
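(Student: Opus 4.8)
The plan is to compute $\beta^p_k(\rho_*\pi) = \sum_{j=1}^d p^{k\mu^p_j(\Pi)}$ purely in terms of the representation theory of $\SL_r(\C)$ (equivalently, its compact form $\SU(r)$), using that the Satake parameters of $\Pi = \rho_*\pi$ are the images under $\rho$ of the Satake parameters of $\pi$. First I would observe that by Conjecture \ref{conj-langlands}, the semisimple conjugacy class $p^{\mu^p(\Pi)} \in \GL_d(\C)$ equals $\rho(p^{\mu^p(\pi)})$, where $p^{\mu^p(\pi)}$ lies in the maximal torus of $\SL_r(\C)$. Hence, writing $x := p^{\mu^p(\pi)}$ and letting $\rho$ have highest weight $\theta = (\theta_1,\dots,\theta_r)$, we have
$$\beta^p_k(\rho_*\pi) = \sum_{j=1}^d (\rho(x)\text{-eigenvalue})_j^{\,k} = \mathrm{tr}\,\rho(x^k) = \chi_\rho(x^k),$$
where $\chi_\rho$ is the character of $\rho$. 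So the whole problem reduces to expressing the class function $x \mapsto \chi_\rho(x^k)$ on $\SU(r)$ as a $\Z$-linear (in fact $\C$-linear) combination of the irreducible characters $\chi_\beta$, $\beta \in \Z^{r-1}_{\ge 0}$, and then invoking Lemma \ref{casselman-shalika} to replace each $\chi_\beta(x) = \chi_\beta(\mathrm{diag}(p^{\mu^p(\pi)})) = \lambda_\pi(p^{\iota^{-1}(\beta)})$, noting $\chi_0 \equiv 1$ contributes the constant term $c_0$.

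Next I would control which $\beta$ can appear. The function $x \mapsto \chi_\rho(x^k)$ is, via the $k$-th power map on the torus, a Laurent polynomial in the torus coordinates whose exponents are $k$ times the weights of $\rho$; since every weight of $\rho$ lies in the convex hull of the Weyl orbit of $\theta$, the monomials appearing have weights bounded (in the dominance/root-lattice sense) by $k\theta$, in particular the $\ell^1$-type size $\sum \alpha_i$ of any highest weight $\beta = \iota(\mu)$ appearing in the decomposition satisfies $\sum \alpha_i \le k(\theta_1 - \theta_r)$ — this last inequality comes from pairing a dominant weight with the appropriate coweight and using that the extreme weights of $\rho^{\otimes\text{-like}}$ expansions are dominated by $k\theta$ whose ``width'' is $\theta_1 - \theta_r$. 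That gives the stated truncation of the sum. The coefficients $c_\alpha(k,\theta)$ are then just the multiplicities obtained by decomposing the (virtual) character $\chi_\rho(x^k)$ into irreducibles, and they depend only on $\rho$ (i.e. on $\theta$) and $k$, not on $\pi$.

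Finally I would pin down the two constant terms. The constant term $c_0(k,\theta)$ is the multiplicity of the trivial representation in $x \mapsto \chi_\rho(x^k)$, i.e. $c_0(k,\theta) = \int_{\SU(r)} \chi_\rho(x^k)\,dx$. For $k=1$ this is $\int_{\SU(r)} \chi_\rho(x)\,dx = \dim \mathrm{Hom}_{\SU(r)}(\mathbf{1},\rho) = 0$ since $\rho$ is nontrivial irreducible (here $d>1$ is used). For $k=2$, $\int_{\SU(r)} \chi_\rho(x^2)\,dx$ is by definition the Frobenius--Schur indicator $\mathfrak{s}(\rho)$ — it equals $+1$ if $\rho$ is real (orthogonal), $-1$ if quaternionic (symplectic), and $0$ if complex — so $c_0(2,\theta) = \mathfrak{s}(\rho)$, as claimed. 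I expect the main obstacle to be the clean bookkeeping in the second step: verifying that the highest weights $\beta$ occurring in the decomposition of the power-map pullback of $\chi_\rho$ really are constrained by $\sum \alpha_i \le k(\theta_1-\theta_r)$ rather than some looser bound, which requires being careful about the difference between the weight-lattice coordinates $\theta$ and the fundamental-weight coordinates $\alpha = \iota(\cdot)$, and about how the $k$-th power map interacts with the convex-hull description of weights; everything else (the reduction to $\chi_\rho(x^k)$ and the Frobenius--Schur identification) is formal.
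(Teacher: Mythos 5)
Your proposal is correct and follows essentially the same route as the paper: reduce $\beta_k^p(\rho_*\pi)$ to $\chi_\theta(\nu^k)$, decompose this class function into irreducible characters of $\SU(r)$ with the highest weights constrained by $k\theta$ (the paper phrases the constraint via the lexicographically highest monomial, you via the convex hull of weights — these give the same bound $\gamma_1-\gamma_r\le k(\theta_1-\theta_r)$), identify $c_0(k,\theta)=\int_{\SU(r)}\chi_\theta(g^k)\,dg$ by orthogonality, and read off $c_0(1,\theta)=0$ and $c_0(2,\theta)=\mathfrak{s}(\rho)$ from irreducibility and the Frobenius--Schur indicator, finishing with Lemma \ref{casselman-shalika}. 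The only slip is notational: the translation from characters to Fourier coefficients should read $\chi_\gamma(\nu)=\lambda_\pi(p^{\iota(\gamma)})$ for a dominant weight $\gamma$, not $\lambda_\pi(p^{\iota^{-1}(\beta)})$.
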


\begin{proof}
We have,
$$\beta_k^p(\rho_*\pi)=\mathrm{Tr}(\rho^k(\nu))=\mathrm{Tr}(\rho(\nu^k))=\chi_\theta(\nu^k),$$
where $\nu$ is the diagonal matrix in $\SL_r(\C)$ with entries $\nu_i^p(\pi)$ such that $\{\nu^p_i(\pi)\}_{i=1}^r$ are the Satake parameters of $\pi$ at $p$, and $\chi_\theta$ is the character of the highest weight representation of $\SU(r)$ attached to $\theta$. Any symmetric polynomial in $\{\nu_j\}_j$ can be written as a finite linear combination of characters attached to the highest weight representations (the Schur polynomials), evaluated at $\{\nu_j\}_j$. Thus
$$\chi_\theta(\nu^k)=\sum_{\gamma}c_\gamma(k,\theta)\chi_\gamma(\nu),$$
where $\gamma$ runs over dominant elements in $\Z^r/\Z(1,\dots,1)$. We write every weight $\gamma:=(\gamma_1,\dots,\gamma_r)$ appearing in the above equation in $(\gamma_1-\gamma_r,\dots,\gamma_{r-1}-\gamma_r,0)\in \Z^r_{\ge 0}$. The lexicographically highest weight of the polynomial expression in the LHS is clearly $k(\theta_1-\theta_r,\dots,\theta_{r-1}-\theta_r,0)$. By comparing the highest weights in the polynomial expansion of the both sides we can rewrite the last display as
$$\chi_\theta(\nu^k)=\sum_{\gamma:\gamma_1-\gamma_r\le k(\theta_1-\theta_r)}C_\gamma(k,\theta)\chi_\gamma(\nu).$$
We calculate the summand corresponding to the zero weight using orthogonality of characters of $\SU(r)$ with respect to its probability Haar measure:
$$c_0(k,\theta)=\int_{\SU(r)}\chi_\theta(g^k)dg.$$
Thus, clearly $c_0(1,\theta)=0$ and $c_0(2,\theta)=\mathfrak{s}(\rho)$. We conclude the proof writing $\alpha=\iota(\gamma)$ and the relation between $\lambda$ and $\chi$ as in Lemma \ref{casselman-shalika}.
\end{proof}

\begin{lemma}\label{prime-number-theorem}
Let $\psi,\rho,\theta$ be as in Theorem \ref{low-lying-zero} and $c_0(k,\theta)$ as in Lemma \ref{moments-satake-parameter}. Then
$$\frac{1}{\log C_{\rho,X}}\sum_{k=1}^{2}c_0(k,\theta)\sum_{p<\infty}\frac{\log p}{p^{k/2}}\left(\hat{\psi}\left(\frac{k\log p}{\log C_{\rho,X}}\right)+\hat{\psi}\left(\frac{-k\log p}{\log C_{\rho,X}}\right)\right)=\frac{\mathfrak{s}(\rho)}{2}\psi(0)+O\left(\frac{1}{\log X}\right),$$
as $X$ tends off to infinity.
\end{lemma}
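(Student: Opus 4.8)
The plan is to recognize the $k$-sums over primes as weighted versions of the prime-counting sum $\sum_{p} \frac{\log p}{p^s}$, which is essentially $-\frac{\zeta'}{\zeta}(s)$, and to exploit that $\hat\psi$ has compact support together with Conjecture \ref{conj-cond-size} to localize the primes to a dyadic-type range where the prime number theorem gives clean asymptotics. First I would recall from Lemma \ref{moments-satake-parameter} that $c_0(1,\theta)=0$, so the $k=1$ term vanishes identically and only $k=2$ survives; since $c_0(2,\theta)=\mathfrak{s}(\rho)$, the left-hand side reduces to
$$
\frac{\mathfrak{s}(\rho)}{\log C_{\rho,X}}\sum_{p<\infty}\frac{\log p}{p}\left(\hat\psi\!\left(\frac{2\log p}{\log C_{\rho,X}}\right)+\hat\psi\!\left(\frac{-2\log p}{\log C_{\rho,X}}\right)\right).
$$
Writing $L:=\log C_{\rho,X}$, which by Conjecture \ref{conj-cond-size} satisfies $L\asymp_\rho\log X$, and substituting $u=\frac{2\log p}{L}$, the support condition on $\hat\psi$ (contained in $[-\delta,\delta]$) restricts the sum to primes $p\ll C_{\rho,X}^{\delta/2}$, a genuinely unbounded but polynomially controlled range.

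Next I would invoke partial summation against the Chebyshev function $\vartheta(x)=\sum_{p\le x}\log p$, using the prime number theorem in the form $\vartheta(x)=x+O(x/(\log x)^{A})$ for any $A$ (or even the classical error term; only a power saving over $\log x$ is needed). The main term comes from replacing $d\vartheta(t)$ by $dt$: one gets
$$
\frac{1}{L}\sum_{p}\frac{\log p}{p}\,g\!\left(\frac{2\log p}{L}\right)\;=\;\frac{1}{L}\int_{1}^{\infty}\frac{1}{t}\,g\!\left(\frac{2\log t}{L}\right)dt\;+\;(\text{error}),
$$
and the substitution $v=\frac{2\log t}{L}$, $dv=\frac{2}{L}\frac{dt}{t}$, turns the integral into $\frac12\int_0^\infty g(v)\,dv$, where here $g(v)=\hat\psi(v)+\hat\psi(-v)$. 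Since $\int_0^\infty(\hat\psi(v)+\hat\psi(-v))\,dv=\int_{-\infty}^\infty\hat\psi(v)\,dv=\psi(0)$ by Fourier inversion, the main term is exactly $\frac{\mathfrak{s}(\rho)}{2}\psi(0)$.

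The error analysis is the only place requiring care, so that is where I expect the main (though mild) obstacle to lie: one must check that the PNT error term, after partial summation and the $1/L$ normalization, contributes $O(1/\log X)$ uniformly. Concretely, the boundary terms in the partial summation vanish because $\hat\psi$ is compactly supported (killing the upper endpoint) and the lower endpoint is bounded, while the integral of the error against $\frac{d}{dt}\!\big[\frac{1}{t}g(\tfrac{2\log t}{L})\big]$ is $\ll \frac{1}{L}\int_1^{C_{\rho,X}^{\delta/2}}\frac{1}{t(\log t)^{A}}\,dt \ll \frac{1}{L}\cdot\frac{(\log X)^{1-A}}{1-A}$, which is $O(1/\log X)$ as soon as $A\ge 2$; here I use $\hat\psi$ and $\hat\psi'$ bounded and $L\asymp\log X$. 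Collecting the main term and this error gives the claimed identity, and I would remark that the uniformity in $X$ is automatic because all implied constants depend only on $\rho$, $\delta$, and fixed norms of $\psi$.
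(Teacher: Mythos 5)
Your proposal is correct and follows essentially the same route as the paper: reduce to the $k=2$ term via $c_0(1,\theta)=0$ and $c_0(2,\theta)=\mathfrak{s}(\rho)$, apply partial summation against a Chebyshev function with a prime number theorem error saving a power of $\log$, change variables to get $\tfrac12\int_0^\infty(\hat\psi(v)+\hat\psi(-v))\,dv=\tfrac12\psi(0)$, and absorb the remainder into $O(1/\log X)$. The only cosmetic difference is that you use $\vartheta(x)$ where the paper uses $\Psi(x)$, which changes nothing of substance.
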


\begin{proof}
The summand for $k=1$ vanishes by Lemma \ref{moments-satake-parameter}.
Employing the value of $c_0(2,\theta)$ from Lemma \ref{moments-satake-parameter} thus it will be enough to show that
$$\frac{1}{\log C_{\rho,X}}\sum_{p<\infty}\frac{\log p}{p}\hat{\psi}\left(\frac{2\log p}{\log C_{\rho,X}}\right)=\frac{1}{2}\int_0^\infty \hat{\psi}(t)dt+O\left(\frac{1}{\log X}\right),$$
as changing $\log p$ to $-\log p$ in the entry of $\hat{\psi}$ the main term in the RHS above would be $\int_{-\infty}^0\hat{\psi}(t)dt$. Adding these two main terms we obtain $$\int_{-\infty}^\infty\hat{\psi}(t)dt =\psi(0),$$ as required.

Let $\Psi(x):=\sum_{n\le x}\Lambda(n)$ be the Chebyshev's function where $\Lambda$ is the Von Mangoldt function (not to be confused with the completed $L$-function $\Lambda$). A strong form of the prime number theorem implies that
$$\Psi(x)=x(1+O((1+\log x)^{-2})).$$
We write the sum as a Stieltjes integral,
$$\frac{1}{\log C_{\rho,X}}\sum_{p<\infty}\frac{\log p}{p}\hat{\psi}\left(\frac{2\log p}{\log C_{\rho,X}}\right)=\frac{1}{\log C_{\rho,X}}\int_1^\infty \hat{\psi}\left(\frac{2\log x}{\log C_{\rho,X}}\right)\frac{d\Psi(x)}{x}.$$
Doing an integration by parts and a change of variable the RHS of the above can be written as
$$\frac{1}{2}\int_0^\infty \hat{\psi}(t)(1+(1+t)^{-2})dt-\frac{1}{\log C_{\rho, X}}\int_0^\infty\hat{\psi}'(t)(1+O(1+t)^{-2})dt.$$
This can be easily checked to be equal to 
$$\frac{1}{2}\int_0^\infty \hat{\psi}(t)dt+O(1/\log X),$$
hence the claim follows.
\end{proof}

\begin{proof}[Proof of Theorem \ref{low-lying-zero}]
We calculate the RHS of \eqref{statistic-expression}.
Using Lemma \ref{moments-satake-parameter} we write
\begin{multline*}
    \int_{\hat{\X}}\beta^p_k(\pi)\frac{J_X(\pi)}{\ell(\pi)}d\mu_\aut(\pi)=c_0(k,\theta)\int_{\hat{\X}}\frac{J_X(\pi)}{\ell(\pi)}d\mu_\aut(\pi)\\+\sum_{0<\sum\alpha_i\le k(\theta_1-\theta_r)}c_\alpha(k,\theta)\int_{\hat{\X}}\lambda_\pi(p^\alpha)\frac{J_X(\pi)}{\ell(\pi)}d\mu_\aut(\pi).
\end{multline*}
If $p^{k(\theta_1-\theta_r)}=o(X)$ then using Theorem \ref{orthogonality-full-spectrum} we conclude the second summand in the RHS above vanishes.
However, $\hat{\psi}$ is supported on $[-\delta,\delta]$. Conjecture \ref{conj-cond-size} and the size of $\delta$ as in the statement of Theorem \ref{low-lying-zero} imply that
$$p^{k(\theta_1-\theta_r)}\le C_{\rho,X}^{\delta(\theta_1-\theta_r)}\ll_\rho X^{\delta\mathfrak{C}(\rho)(\theta_1-\theta_r)}=o(X).$$
Hence the second summand in the RHS of \eqref{statistic-expression} is
\begin{multline*}
    \sum_{k=1}^{2}\sum_{p<\infty}\frac{\log p}{p^{k/2}}\hat{\psi}\left(\frac{k\log p}{\log C_{\rho,X}}\right)\int_{\hat{\X}_\gen}\beta^p_k(\rho_*\pi)\frac{J_X(\pi)}{\ell(\pi)}d\mu_\aut(\pi)\\=\sum_{k=1}^{2}c_0(k,\theta)\sum_{p<\infty}\frac{\log p}{p^{k/2}}\hat{\psi}\left(\frac{k\log p}{\log C_{\rho,X}}\right)\int_{\hat{\X}_\gen}\frac{J_X(\pi)}{\ell(\pi)}d\mu_\aut(\pi).
\end{multline*}
Doing a similar analysis with with the third summand in the RHS of \eqref{statistic-expression} we conclude from \eqref{statistic-expression} that $D_{\rho,X}(\psi)-D^{\mathrm{pole}}_{\rho,X}(\psi)$ equals to
\begin{multline*}
\hat{\psi}(0)-\frac{1}{\log C_{\rho,X}}\sum_{k=1}^{2}c_0(k,\theta)\sum_{p<\infty}\frac{\log p}{p^{k/2}}\left(\hat{\psi}\left(\frac{k\log p}{\log C_{\rho,X}}\right)+\hat{\psi}\left(\frac{-k\log p}{\log C_{\rho,X}}\right)\right)+O\left(\frac{1}{\log X}\right).
\end{multline*}
We conclude the proof employing Lemma \ref{prime-number-theorem}.
\end{proof}

\begin{remark}\label{rmk-d-pole}
We expand the remark after Theorem \ref{low-lying-zero} that $D^{\mathrm{pole}}$ is typically negligible. It follows from the following conjecture, as in \cite[Hypothesis 11.2]{ST} (also see the comment afterwards), that $\Lambda(.,\rho_*\pi)$ is entire for almost all $\pi$. Quantitatively, we conjecture the following weighted version.

\begin{conj}\label{conj-pole}
Let $\mathcal{P}$ denote the characteristic function on $\hat{\X}_\gen$ such that $\Lambda(s,\rho_*\pi)$ has a pole in the critical strip. Then
$$\int_{\hat{\X}_\gen}\mathcal{P}(\pi)\frac{J_X(\pi)}{\ell(\pi)}d\mu_\aut(\pi)\ll X^{r-1-\eta(\rho)},$$
for some $\eta(\rho)>0$.
\end{conj}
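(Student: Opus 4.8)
The plan is to exploit Langlands functoriality (Conjecture \ref{conj-langlands}) to convert the hypothesis ``$\Lambda(s,\rho_*\pi)$ has a pole'' into a rigidity statement about the $L$-parameter $\phi_\pi$, and then to bound the resulting thin set of $\pi$ using the large sieve of Theorem \ref{large-sieve} together with the character-theoretic linearisation of Fourier coefficients in \eqref{linearize-hecke-eigenvalue}. First I would record the pole characterisation: by Jacquet--Shalika theory for standard $L$-functions of isobaric automorphic representations of $\GL_d$, combined with Conjecture \ref{conj-langlands}, the completed $\Lambda(s,\rho_*\pi)$ has a pole in the critical strip precisely when the isobaric decomposition of $\rho_*\pi$ contains a summand $|\det|^{it_0}$, equivalently when the rank-$r$ semisimple parameter $\rho\circ\phi_\pi$ contains the one-dimensional constituent $|\cdot|^{it_0}$ for some $t_0\in\R$. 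For $\pi$ cuspidal the parameter $\phi_\pi$ is irreducible, so $\mathcal P(\pi)=1$ forces the Zariski closure $H_\pi$ of the image of $\phi_\pi$ in $\SL_r(\C)$ to be a \emph{proper} reductive subgroup acting irreducibly on $\C^r$ and with $\rho|_{H_\pi}$ admitting a one-dimensional constituent. Since $\pi$ is cuspidal with $C(\pi)<X$ and, by Conjecture \ref{conj-langlands}, tempered, the archimedean parameters of $\rho_*\pi$ are $\ll_\rho X$, so \eqref{bound-gamma-factor} bounds $|t_0|\ll_\rho X$; to eliminate this unknown $t_0$, I would pass to the Rankin--Selberg square, where a pole of $\Lambda(s,\rho_*\pi)$ forces $L(s,\pi,\rho\otimes\rho^\vee)=\zeta(s)\prod_jL(s,\pi,\rho_j)$ --- the factor $\zeta(s)$ coming from $\langle\rho\otimes\rho^\vee,\mathbf 1\rangle=1$ and $\rho_j$ running over the nontrivial irreducible constituents of $\rho\otimes\rho^\vee$ --- to have a pole of order $\ge2$ at $s=1$, hence some $L(s,\pi,\rho_j)$ to have a pole at $s=1$: a condition with no free parameter, at the cost of replacing $\rho$ by the bounded list $\{\rho_j\}$.

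The ensuing set of $\pi$ is thin. There are only finitely many conjugacy classes of proper reductive subgroups $H\subsetneq\SL_r(\C)$ acting irreducibly on $\C^r$, and --- again invoking functoriality --- each such cuspidal $\pi$ descends to one of finitely many automorphic families (symmetric or exterior power lifts, tensor-induced lifts from smaller groups, self-dual forms, forms of finite Galois type, and the like), each of which is a thin subfamily governed by strictly fewer archimedean parameters than the full family; the non-cuspidal generic $\pi$ with $\mathcal P(\pi)=1$ contribute at most $\ll X^{r-2+\epsilon}$ to the integral (the largest continuous stratum, from the partition $r=(r-1)+1$, has size $\asymp X^{r-2}$ up to logarithms), which is harmless. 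To make this effective within the present framework I would detect membership in a fixed cuspidal stratum by a \emph{short} amplifying Dirichlet polynomial in the Fourier coefficients: when $\rho_j\circ\phi_\pi$ contains the trivial constituent, the Satake parameters $p^{\mu^p(\pi)}$ land, for every prime $p\le P:=X^\kappa$, on the proper real-analytic subvariety $Z_p:=\{z\in T_0/W:\det(\rho_j(z)-1)=0\}$ of the Sato--Tate torus, which has positive codimension because $\rho_j$ is nontrivial; hence a fixed polynomial in the $\lambda_\pi(p^{\mathbf a})$ with $\|\mathbf a\|$ bounded --- realised through Lemma \ref{casselman-shalika} as a combination of $\SU(r)$-characters bounded below near each $Z_p$ --- summed over $p\le P$ yields $|\sum_n\alpha(n)\lambda_\pi(n)|\gg X^{\eta_0}$ on the stratum while $\sum_n|\alpha(n)|^2\ll X^{\kappa'}$ for suitably small $\kappa,\kappa',\eta_0$. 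As the amplifier is short, Theorem \ref{large-sieve} applies; dividing its output $X^{r-1}\sum_n|\alpha(n)|^2$ by the square of the lower bound yields the claimed $X^{r-1-\eta(\rho)}$ with $\eta(\rho)>0$ controlled by the codimensions of the strata. (An alternative to Step~2--3 is simply to cite a Weyl law for the relevant functorial images, which gives the smaller count directly but requires the converse direction of strong functoriality.)

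The main obstacle is securing a genuine \emph{power saving} here rather than merely $o(X^{r-1})$. The natural lower bound for the detecting amplifier on a stratum amounts to a residue of $L(s,\pi,\rho_j)$, i.e.\ a quantity of the shape $L(1,\Pi'')$ for the complementary isobaric piece $\Pi''$ of $(\rho_j)_*\pi$, which is known only to be $\gg C(\pi)^{-\epsilon}\gg X^{-\epsilon}$ and not bounded away from zero; so the crude argument produces $o(X^{r-1})$ but no power saving without extra input --- either an \emph{effective} vertical Sato--Tate law (a quantitative form of Theorem \ref{sato-tate} with a power-saving error term), which the soft local analysis of the Kuznetsov formula in this paper does not deliver, or a subconvex-type lower bound for $L(1,\Pi'')$ uniform over the stratum. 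Carrying the argument out uniformly over the finitely many subgroup shapes $H$ and auxiliary representations $\rho_j$, and tracking the dependence on $\kappa,\kappa',\eta_0$ to extract a concrete $\eta(\rho)$, constitutes a further, more routine, layer of work. It is for these reasons --- chiefly the absence of a power-saving effective equidistribution statement --- that the estimate is recorded here only as a conjecture.
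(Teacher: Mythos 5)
This statement is labeled a \emph{Conjecture} in the paper, not a theorem, and the paper offers no proof --- only the remark that it is ``most likely true'', that for $\rho$ the standard representation a pole of $\Lambda(s,\pi)$ forces $\pi$ to be non-cuspidal, and that a standard Weyl law for the non-cuspidal spectrum then supplies the required power saving. Your write-up is fully consistent with this status: you lay out a plausible strategy (pole criterion via Jacquet--Shalika under Conjecture \ref{conj-langlands}, reduction to thin cuspidal strata indexed by proper reductive subgroups of $\SL_r(\C)$, a short detecting amplifier fed into Theorem \ref{large-sieve}), but then correctly identify that the method as it stands yields only $o(X^{r-1})$ rather than a genuine power saving, because the quantity detecting a stratum --- essentially a residue of $L(s,\pi,\rho_j)$ --- is bounded below only by $\gg X^{-\epsilon}$ and not away from zero, and because a power-saving effective form of the Sato--Tate equidistribution (Theorem \ref{sato-tate}) is not among the tools this paper develops. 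Your final verdict, that without such an effective equidistribution input the estimate must remain conjectural, agrees with the paper's decision to record it as a conjecture; indeed your discussion is considerably more detailed than the paper's one-line heuristic, and the obstacle you isolate is precisely why the statement is not proved here.
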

Note that, because of the Langlands functoriality in Conjecture \ref{conj-langlands} it makes sense to talk about the poles of $\Lambda(.,\rho_*\pi)$. Conjecture \ref{conj-pole} is most likely true. In fact, if $\rho$ is the standard representation, we know that $\Lambda(s,\pi)$ will have a pole only if $\pi$ is non-cuspidal. A standard Weyl law for non-cuspidal spectrum of $G$ supports Conjecture \ref{conj-pole}.

\begin{lemma}\label{poles-negligible}
We assume Conjecture \ref{conj-langlands}, Conjecture \ref{conj-cond-size}, and Conjecture \ref{conj-pole}. Let $\delta<\frac{4\pi\eta(\rho)}{\mathfrak{C}(\rho)}$ where $\mathfrak{C}(\rho)$ and $\eta(\rho)$ are as in Conjecture \ref{conj-cond-size} and Conjecture \ref{conj-pole}, respectively. Then
$$\D^{\mathrm{pole}}_{\rho,X}(\psi)\ll_N (\log X)^{-N}.$$
for all large $N>0$.
\end{lemma}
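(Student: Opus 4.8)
The plan is to bound $D^{\mathrm{pole}}_{\rho,X}(\psi)$ by treating each pole contribution trivially and exploiting the sparsity, furnished by Conjecture~\ref{conj-pole}, of those $\pi$ whose lifted $L$-function has a pole. First I would unwind the definition of $D^{\mathrm{pole}}_{\rho,X}$. Since $J_X\ge 0$ and $\ell(\pi)>0$, the sum $\sum_{\tau_{\rho_*\pi}}\psi(\cdot)$ vanishes unless $\mathcal{P}(\pi)=1$, and $\int_{\hat{\X}_\gen}\tfrac{J_X(\pi)}{\ell(\pi)}\,d\mu_\aut(\pi)\asymp X^{r-1}$ by Theorem~\ref{orthogonality-full-spectrum}; hence
\[
\bigl|D^{\mathrm{pole}}_{\rho,X}(\psi)\bigr|\ \ll\ X^{1-r}\int_{\hat{\X}_\gen}\mathcal{P}(\pi)\Bigl(\sum_{\tau_{\rho_*\pi}}\bigl|\psi(\tau_{\rho_*\pi}\tfrac{\log C_{\rho,X}}{2\pi})\bigr|\Bigr)\frac{J_X(\pi)}{\ell(\pi)}\,d\mu_\aut(\pi),
\]
so it suffices to bound the inner sum pointwise and uniformly in $\pi$.

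Next I would locate and count the poles of $\Lambda(s,\rho_*\pi)$. By definition each relevant pole is of the form $1/2+i\tau_{\rho_*\pi}$ with $0\le\Re(1/2+i\tau_{\rho_*\pi})\le 1$, which forces $|\Im\tau_{\rho_*\pi}|\le 1/2$. Moreover, by Conjecture~\ref{conj-langlands} the transfer $\rho_*\pi$ is an isobaric automorphic representation of $\GL_d(\Z)$, so $\Lambda(s,\rho_*\pi)$ is a finite product of completed standard $L$-functions of cuspidal representations; by Godement--Jacquet and Jacquet--Shalika the $\GL_{\ge 2}$-cuspidal factors are entire, while the at most $d$ factors of $\GL_1$-type contribute altogether $O_\rho(1)$ poles, counted with multiplicity. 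Since $\psi$ is the Fourier transform of a smooth function supported in $[-\delta,\delta]$, the Paley--Wiener theorem gives $|\psi(z)|\ll_\delta e^{\delta|\Im z|}$ on $\C$ (together with rapid decay in $\Re z$, which I would simply discard). Combining these three facts,
\[
\sum_{\tau_{\rho_*\pi}}\bigl|\psi(\tau_{\rho_*\pi}\tfrac{\log C_{\rho,X}}{2\pi})\bigr|\ \ll_{\rho,\delta}\ \exp\!\bigl(\delta\cdot\tfrac12\cdot\tfrac{\log C_{\rho,X}}{2\pi}\bigr)\ =\ C_{\rho,X}^{\,\delta/(4\pi)},
\]
uniformly over $\pi\in\hat{\X}_\gen$.

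Inserting this into the previous display, then applying Conjecture~\ref{conj-pole} in the form $\int_{\hat{\X}_\gen}\mathcal{P}(\pi)\tfrac{J_X(\pi)}{\ell(\pi)}\,d\mu_\aut(\pi)\ll X^{r-1-\eta(\rho)}$ and Conjecture~\ref{conj-cond-size} in the form $C_{\rho,X}\ll_\rho X^{\mathfrak{C}(\rho)}$, I would obtain
\[
\bigl|D^{\mathrm{pole}}_{\rho,X}(\psi)\bigr|\ \ll_{\rho,\delta}\ C_{\rho,X}^{\,\delta/(4\pi)}X^{-\eta(\rho)}\ \ll_{\rho,\delta}\ X^{\,\mathfrak{C}(\rho)\delta/(4\pi)-\eta(\rho)}.
\]
The hypothesis $\delta<\frac{4\pi\eta(\rho)}{\mathfrak{C}(\rho)}$ makes this exponent strictly negative, so $D^{\mathrm{pole}}_{\rho,X}(\psi)\ll_{\rho,\delta}X^{-c}$ for some $c=c(\rho,\delta)>0$, which is in particular $\ll_N(\log X)^{-N}$ for every $N>0$.

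I expect the only genuinely delicate point to be calibrating the admissible range of $\delta$, i.e.\ getting the exact constant $\tfrac{1}{4\pi}$ in the exponent of $C_{\rho,X}$. This relies on the \emph{location} of the poles---on or near the boundary $\Re(s)\in\{0,1\}$ of the critical strip, via the isobaric structure of $\rho_*\pi$ and Jacquet--Shalika---rather than merely their number, combined with the sharp Paley--Wiener growth rate of $\psi$ on the horizontal line at height $\tfrac{\log C_{\rho,X}}{4\pi}$, normalised consistently with the Fourier conventions used elsewhere in the paper. The remaining ingredients---the opening reduction, the pole count $O_\rho(1)$, and the two applications of the conjectures---are soft.
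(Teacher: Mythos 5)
Your proposal is correct and follows essentially the same route as the paper: bound the pole sum uniformly by $O_\rho(1)$ terms each of size $\ll C_{\rho,X}^{\delta/(4\pi)}$ via Paley--Wiener and $|\Im\tau_{\rho_*\pi}|\le 1/2$, convert to $X^{\delta\mathfrak{C}(\rho)/(4\pi)}$ by Conjecture \ref{conj-cond-size}, and beat it with the $X^{-\eta(\rho)}$ saving from Conjecture \ref{conj-pole}. The only cosmetic difference is that you discard the polynomial decay of $\psi$ in the real direction (which the paper retains as a $(1+|\tau|\log C_{\rho,X})^{-N}$ factor but does not need, since the power saving already dominates), and you spell out the $O_\rho(1)$ pole count via the isobaric structure where the paper simply asserts it.
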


\begin{proof}
$\Lambda(s,\rho_*\pi)$ has bounded number of poles in the critical strip. We write poles as $1/2+i\tau$ so that $|\Im(\tau)|<1/2$. We recall that $\hat{\psi}$ is supported in $[-\delta,\delta]$. Doing integration by parts sufficiently many times we get
\begin{align*}
&\sum_{\tau}\psi\left(\tau\frac{\log C_{\rho,X}}{2\pi i}\right)\ll \max_{\tau}\left|\psi\left(\tau\frac{\log C_{\rho,X}}{2\pi}\right)\right|\\
&\ll_{N,\psi} C_{\rho,X}^{\delta\frac{\Im(\tau)}{2\pi}}(1+|\tau|\log C_{\rho,X})^{-N}\ll \frac{X^{\delta\mathfrak{C}(\rho)/4\pi}}{(1+\log X)^N}.
\end{align*}
The last estimate follows from Conjecture \ref{conj-cond-size}. Finally, conjecture \ref{conj-pole} immediately yield the claim if $\delta\mathfrak{C}(\rho)<4\pi\eta(\rho)$.
\end{proof}
Thus, if we further assume Conjecture \ref{conj-pole} then the assertion of Theorem \ref{low-lying-zero} can be rewritten as
$$D_{\rho,X}(\psi)=\hat{\psi}(0)-\frac{\mathfrak{s}(\rho)}{2}\psi(0)+O\left(\frac{1}{\log X}\right),$$
if $\delta<\frac{1}{\mathfrak{C}(\rho)}\min\left(\frac{1}{\theta_1-\theta_r},4\pi\eta(\rho)\right)$.
\end{remark}

\section{The Orthogonality Conjecture over a Cuspidal Spectrum}
We give a variant of Theorem \ref{orthogonality-full-spectrum} (that is, the conjecture in \eqref{conjecture-orthogonality}) over a cuspidal spectrum as discussed in Remark \ref{project-on-cusp}. We also mention that a similar technique may be applied to prove a variant of Theorem \ref{sato-tate} over a cuspidal spectrum.

In this section we will work in the $S$-arithmetic setting. We fix $p$ to be a fixed finite prime. We redefine our notations. Let $G:=\PGL_r(\R)\times\PGL_r(\Q_p)$, $\Gamma:=\PGL_r(\Z[1/p])$ diagonally embedded in $G$, and $\X:=\Gamma\backslash G$. The strong approximation theorem on $G$ implies that
$$\X\cong \PGL_r(\Q)\backslash\PGL_r(\mathbb{A})/\prod_{v\neq p}\PGL_r(\Z_v),$$
where $\mathbb{A}$ is the ring of adeles over $\Q$. For any irreducible automorphic representation $\pi$ of $G$ let $\pi_p$ and $\pi_\infty$ be $p$-adic and infinite components of $\pi$, respectively. 

Let $N$ be the unipotent subgroup in $G$ and $\psi_m:=\psi_{m,\infty}\otimes\psi_{m,p}$ for $m\in \Z[1/p]^{r-1 }_{\neq 0}$ is an additive character on $N$, where $\psi_{m,\infty}$ is defined as in \eqref{additive-char-unip} and $\psi_{.,p}$ is induced similarly from an unramified character $\psi_0$ of $\Q_p$. We abbreviate $\psi_{v}=\psi_{m,v}$ for $m=(1,\dots,1)$, $v=p,\infty$.

We can realize a Whittaker model $\W(\pi_\infty,\psi_\infty)$ of $\pi_\infty$. We define $J_F(\pi_\infty)$ (similarly, $J_{F_X}(\pi_\infty)= J_X(\pi_\infty)$ for $F=F_X$ as defined after \eqref{defn-congruence-subgroup}) similarly as in \eqref{bessel-distribution}. $J_X$, as usual, enjoys similar property as in Lemma \ref{spectral-weight-property}. We also define $W_F$ (similarly, $W_{F_X}=W_X$ for $F=F_X$) as in \eqref{psi-average} with respect to the additive character $\psi_\infty$ of $N(\R)$.
We provide a similar orthogonality statement as in Theorem \ref{orthogonality-full-spectrum} varying only over the cusp forms, as follows.

\begin{theorem}\label{orthogonality-cuspidal-spectrum}
Let $X$ be large. Let $\sigma$ be a fixed supercuspidal representation of $\PGL_r(\Q_p)$. Then for $m,n\in\N^{r-1}$ coprime to $p$ (i.e. all the coordinates are coprime to $p$) with
$$\min(n_{r-1}^{r-1}\dots n_1m_1^{r-1}\dots m_{r-1},m_{r-1}^{r-1}\dots m_1 n_1^{r-1}\dots n_{r-1})\ll X^r$$
with sufficiently small implied constant, we have
$$\sum_{\substack{{\pi\,\,\mathrm{cuspidal}}\\{\pi_p=\sigma}}}{\overline{\lambda_\pi(m)}\lambda_\pi(n)}\frac{J_X(\pi_\infty)}{L^{(p)}(1,\pi,\Ad)}=\epsilon_p(1,\sigma\otimes\tilde{\sigma}) W_X(1)\delta_{m=n},$$
where $\epsilon_p$ is the $p$-adic $\epsilon$-factor, $W_X(1)\asymp X^{r-1}$ 
and $L^{(p)}$ denotes the partial $L$-function excluding the Euler factor at $p$.
\end{theorem}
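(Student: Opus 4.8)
**The plan is to mimic the proof of Theorem \ref{orthogonality-full-spectrum}, but run the Kuznetsov-type argument in the $S$-arithmetic setting with a test function that is a product of $F_X$ at the archimedean place and a carefully chosen local test function $F_\sigma$ at $p$ whose associated Bessel distribution acts as a projector onto $\sigma$.** First I would set up the pre-Kuznetsov identity exactly as in \S2.6, but now spectrally decomposing the kernel $\sum_{\gamma\in\Gamma}F(x_1^{-1}\gamma x_2)$ on $\X=\Gamma\backslash G$ with $\Gamma=\PGL_r(\Z[1/p])$ and $F=F_X\otimes F_\sigma$. After integrating against $\psi_m(x_1)\overline{\psi_n(x_2)}$ over $[N]^2$ and restricting to the generic spectrum, the spectral side becomes
$$\int_{\hat{\X}_\gen}\frac{\overline{\lambda_\pi(m)}\lambda_\pi(n)}{\ell(\pi)}\,J_{\bar F_X}(\pi_\infty)\,J_{\bar F_\sigma}(\pi_p)\,d\mu_\aut(\pi),$$
where $\ell(\pi)$ now carries the $p$-adic inner-product normalization. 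The point of choosing $F_\sigma$ to be (a self-convolution of) a matrix coefficient of $\sigma$ is that, by Lemma \ref{well-defined-bessel} and the orthogonality relations for supercuspidal matrix coefficients (Schur orthogonality on $\PGL_r(\Q_p)$, using that $\sigma$ is square-integrable), $J_{\bar F_\sigma}(\pi_p)$ is a nonzero multiple of $\delta_{\pi_p\cong\sigma}$, killing the entire continuous spectrum as well as every cuspidal $\pi$ with $\pi_p\not\cong\sigma$. The nonzero constant is where the $\epsilon$-factor $\epsilon_p(1,\sigma\otimes\tilde\sigma)$ enters — it is the value of the local Bessel distribution of $\sigma$ evaluated appropriately, and one identifies it via the local functional equation / the known formula for the Bessel function of a supercuspidal representation at the identity (as in the references \cite{CPS1, BM, LO} for the local Bessel distribution).

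**Second, I would handle the geometric side.** Running the same Bruhat-cell unfolding as in Proposition \ref{kuznetsov} now produces, besides the identity term $\delta_{m=n}W_X(1)\cdot(\text{local term at }p)$, a sum over nontrivial Weyl elements of $p$-adic-twisted Kloosterman sums against $W_{F_X}$ evaluated at $\tilde m c^* w \tilde n^{-1}$. The crucial geometric input is the analogue of Lemma \ref{support-gamma} and Lemma \ref{error-term-vanish}: since $F_X = F_X^\infty\otimes F_\sigma$ is supported in $K_0(X,\tau)\times(\text{support of }F_\sigma)$ and the support of $F_\sigma$ is a fixed compact set in $\PGL_r(\Q_p)$, for $\gamma\in\Gamma=\PGL_r(\Z[1/p])$ with $\tilde m x_1\gamma x_2\tilde n^{-1}$ in this support, the archimedean component forces (as in Lemma \ref{support-gamma}, using the coprimality of $m,n$ to $p$ so that $\tilde m,\tilde n$ have trivial $p$-part and do not interfere) the bottom row of $\gamma$ to be $(0,\dots,0,1)$ under the stated size hypothesis on $m,n$ and $X$. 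The $p$-integrality constraints from the support of $F_\sigma$ then confine $\gamma$ further, but the archimedean argument already suffices to show all off-diagonal Weyl contributions vanish, exactly as in Lemma \ref{error-term-vanish}. The identity term's $p$-adic factor is $\int_{N(\Q_p)}F_\sigma(x)\overline{\psi_p(x)}\,dx$ times the relevant volume, which again unwinds to $\epsilon_p(1,\sigma\otimes\tilde\sigma)$ after matching normalizations with the spectral-side constant.

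**Third**, I divide through by the nonzero proportionality constant from $J_{\bar F_\sigma}$, replace $\ell(\pi)$ by $L(1,\pi,\Ad)$ up to the absolute constant and strip the $p$-Euler factor from $\ell(\pi)$ — this is precisely how the partial $L$-function $L^{(p)}(1,\pi,\Ad)$ appears — and invoke $W_X(1)\asymp\mathrm{vol}(K_0(X,\tau))^{-1}\asymp X^{r-1}$ from the proof of Theorem \ref{orthogonality-full-spectrum}, together with Lemma \ref{spectral-weight-property} for the positivity and lower bound $J_X(\pi_\infty)\gg 1$ on the relevant cuspidal range. **The main obstacle I anticipate is the local harmonic analysis at $p$:** pinning down that $J_{\bar F_\sigma}(\pi_p)$ is genuinely a clean multiple of $\delta_{\pi_p\cong\sigma}$ across the whole unitary dual (not just cuspidals — one needs it to vanish on all of the non-supercuspidal spectrum, which follows from Schur orthogonality of matrix coefficients of the square-integrable $\sigma$ against matrix coefficients of other representations, but the tempered non-discrete-series and the non-tempered constituents require the orthogonality relations in their distributional form), and then correctly identifying the proportionality constant and the identity-term local integral so that the two combine into exactly $\epsilon_p(1,\sigma\otimes\tilde\sigma)$. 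This is a purely local, representation-theoretic computation on $\PGL_r(\Q_p)$ with no archimedean subtleties, but getting the constant exactly right — rather than merely up to an unspecified nonzero scalar $c_\sigma$ as in the informal statement in Remark \ref{project-on-cusp} — is the delicate part.
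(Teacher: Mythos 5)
Your proposal follows essentially the same route as the paper: the $S$-arithmetic Kuznetsov formula on $\Gamma\backslash G$ with $\Gamma=\PGL_r(\Z[1/p])$ and test function $F_X\otimes\Phi_\sigma$, the local Bessel distribution $J_\sigma(\pi_p)$ acting as a projector onto $\pi_p\cong\sigma$, and the constant identified as $\epsilon_p(1,\sigma\otimes\tilde\sigma)$ via the local Rankin--Selberg functional equation --- exactly the tools you anticipate. (The paper establishes the projector property not by invoking Schur orthogonality of matrix coefficients directly on the unitary dual, but by first computing the Whittaker--Fourier transform $\int_{N(\Q_p)}\Phi_\sigma(ng)\overline{\psi_p(n)}\,dn=W_0(g)\overline{W_0(1)}$ and then applying Schur's lemma to the resulting invariant pairing $\int_{N(\Q_p)\backslash\PGL_r(\Q_p)}W(g)\overline{W_0(g)}\,dg$, which converges because $W_0$ is compactly supported mod $N$; it also quietly restricts to $\sigma$ with $L(s,\sigma\otimes\tilde\sigma)=(1-p^{-rs})^{-1}$ to make the $\gamma$-factor computation clean. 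These are refinements of, not departures from, your plan.)

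The one step that would fail as written is your claim that ``the archimedean argument already suffices'' to kill the non-identity Weyl terms. In the $S$-arithmetic setting $\gamma\in\PGL_r(\Z[1/p])$ and the Kloosterman moduli $c$ run over $\Z[1/p]^{r-1}_{\neq 0}$, so the final step of Lemma \ref{support-gamma} --- ``$|c|\ll\tau$ and the coordinates of $c$ are integers, hence $c=0$'' --- breaks down: $\Z[1/p]$ is dense in $\R$. One genuinely needs the compact support of $\Phi_\sigma$ in $\PGL_r(\Q_p)$ (together with the triviality of $\tilde m$, $\tilde n$ at $p$, which is where the coprimality hypothesis enters) to bound the $p$-adic denominators of the bottom row of $\gamma$ first; only then does the archimedean smallness $|c|\ll\tau$ force $c=0$, with $\tau$ small in terms of that denominator bound. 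You have both ingredients on the table, but the logical dependence is the reverse of what you state: the two places must be used in tandem, and neither suffices alone.
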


\subsection{Proof of Theorem \ref{orthogonality-cuspidal-spectrum}}
As before, for a subgroup $H<G$ we define $\Gamma_H:=\Gamma\cap H$. For a generic irreducible unitary automorphic representation $\pi\in \hat{\X}_\gen$ and a factorizable $\varphi\in \pi$ we may attach a Whittaker function $W_\varphi:=W_{\varphi,p}W_{\varphi,\infty}$ to $\varphi$.
\begin{equation}
    \int_{[N]}\varphi(x(g_\infty,g_p))\overline{\psi_m(x)}dx=\frac{\lambda_\pi(m_o)}{\delta^{1/2}(\tilde{m})\delta^{1/2}(\tilde{m}_p)}W_{\varphi,p}(\tilde{m}g_p)W_{\varphi,\infty}(\tilde{m}g_\infty),
\end{equation}
where $m_p$ and $m_o$ are the $p$-adic part and the $p$-coprime part of $\tilde{m}$, respectively\footnote{That is, if $m=(m_1,\dots,m_{r-1})$ and $m_i= p^{k_i}m'_i$ then $m_o:=(m'_1,\dots,m'_{r-1})$ and $m_p:=(p^{k_1},\dots, p^{k_r-1})$.}. A similar formula as in \eqref{defn-ell-pi} holds in this case as well, where in this case \cite[\S2.2]{LO}
\begin{equation}
    \|W_{\varphi}\|=\|W_{\varphi,p}\|_{\W(\pi_p,\psi_p)}\|W_{\varphi,\infty}\|_{\W(\pi_\infty,\psi_\infty)},
\end{equation}
and $\ell(\pi)\asymp L^{(p)}(1,\pi,\Ad)$ with an absolute implied constant, if $\pi$ is cuspidal.

Let $\sigma$ be a supercuspidal representation of $\PGL_r(\Q_p)$ with conductor exponent of $\sigma$ being fixed. A particular choice of $\sigma$ will be fixed in the next section. Let $\Phi_\sigma$ be the matrix coefficient of $\sigma$ as defined in \eqref{defn-matrix-coeff}. Also recall $F_X$, a smoothened $L^1$-normalized characteristic function of $K_0(X,\tau)$ as in \eqref{defn-congruence-subgroup} and the discussion afterwards. Then a very similar argument as in the proof of Proposition \ref{kuznetsov} would yield
\begin{multline}\label{kuznetsov-s-adic}
    \int_{\hat{\X}_\gen}\frac{\overline{\lambda_\pi(m_o)}\lambda_\pi(n_o)}{\delta^{1/2}(\tilde{m}_p)\delta^{1/2}(\tilde{n}_p)}\frac{J_X(\pi_\infty)J_\sigma(\pi_p)}{\ell(\pi)}d\mu_\aut(\pi)\\=\delta_{m=n}W_X(1)W_\sigma(1)+\sum_{1\neq w\in W}\frac{\delta^{1/2}(\tilde{n})}{\delta^{1/2}(\tilde{m})\delta_w(\tilde{n})}\sum_{c\in \Z[1/p]^{r-1}_{\neq 0}}S_w(m,n;c)\\
    \times\int_{N_w(\R)}W_X(\tilde{m}c^*w\tilde{n}^{-1}x)\overline{\psi_{\infty}(x)}dx\int_{N_w(\Q_p)}W_\sigma(\tilde{m}c^*w\tilde{n}^{-1}x)\overline{\psi_{p}(x)}dx,
\end{multline}
where, similar to \eqref{bessel-distribution}
\begin{equation}\label{bessel-distribution-s-adic}
    J_\sigma(\pi_p)=\sum_{W\in \B(\pi_p)}\pi_p(\bar{\Phi}_\sigma)W(1)\overline{W(1)},
\end{equation}
for some orthonormal basis of $\B(\pi_p)$ of $\pi_p$. Similarly, as in \eqref{psi-average} we define\footnote{We hope that readers don't confuse between $W_\sigma$ and Whittaker functions $W$}
\begin{equation}\label{psi-average-s-adic}
    W_\sigma(g)=\int_N \Phi_\sigma(xg)\overline{\psi_{p}(x)}dx.
\end{equation}

\subsection{Local p-adic computation}
Let $\sigma$ be a supercuspidal representation of $\PGL_r(\Q_p)$ such that the local $L$-factor
\begin{equation}\label{choice-supercuspidal}
    L(s,\sigma\otimes\tilde{\sigma})=(1-p^{-rs})^{-1}.
\end{equation}
Such a supercuspidal representation exists, e.g. see \cite[\S2.1]{Ye}.

Let the matrix coefficient $\Phi_\sigma$ of the supercuspidal representation $\sigma$ is defined by
\begin{equation}\label{defn-matrix-coeff}
    \Phi_\sigma(g)=\langle \sigma(g) W_0,W_0\rangle_{\W(\sigma,\psi_p)},
\end{equation}
where $W_0$ is an $L^2$-normalized vector in the Whittaker model $\W(\sigma,\psi_p)$ of $\sigma$ with respect to $\psi_p$, with $W_0(1)\neq 0$.\footnote{Such a vector exists, e.g. a newvector of $\sigma$.}

\begin{prop}\label{local-s-adic-calculation}
Recall the definitions in \eqref{bessel-distribution-s-adic} and \eqref{psi-average-s-adic}. Then
$$J_\sigma(\pi_p)=\delta_{\pi_p\cong \sigma}\epsilon(1,\sigma\otimes\tilde{\sigma})W_\sigma(1),$$
where $\epsilon$ denotes the local epsilon factor and $\tilde{\sigma}$ is the contragredient of $\sigma$.
\end{prop}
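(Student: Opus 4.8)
The plan is to treat the two sides of the asserted identity by rather different means: the spectral quantity $J_\sigma(\pi_p)$ via orthogonality of matrix coefficients of the supercuspidal $\sigma$, and the $p$-adic unipotent Fourier coefficient $W_\sigma(1)$ of $\Phi_\sigma$ by unfolding it through the Whittaker inner product. First, because $\sigma$ is supercuspidal, $\Phi_\sigma$ is compactly supported modulo the (trivial) centre, so $\bar\Phi_\sigma\in C_c^\infty(\PGL_r(\Q_p))$ and every integral below converges absolutely; and since a matrix coefficient of a supercuspidal $\sigma$ acts by zero on every irreducible smooth representation not isomorphic to $\sigma$, one gets $J_\sigma(\pi_p)=0$ when $\pi_p\not\cong\sigma$, matching the vanishing of the right-hand side. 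So assume $\pi_p\cong\sigma$, realised on $\W(\sigma,\psi_p)$. The Schur orthogonality relations for the square-integrable $\sigma$ — with formal degree $d_\sigma$ measured against the fixed Haar measure and the inner product of \eqref{defn-matrix-coeff} — give that $\sigma(\bar\Phi_\sigma)$ acts as $d_\sigma^{-1}$ times the orthogonal projection onto $\C W_0$. Substituting into \eqref{bessel-distribution-s-adic} and summing over an orthonormal basis of $\W(\sigma,\psi_p)$, the sum $\sum_W\langle W,W_0\rangle\overline{W(1)}$ collapses to $\overline{W_0(1)}$, so $J_\sigma(\sigma)=d_\sigma^{-1}|W_0(1)|^2$. (Alternatively one checks that $\bar\Phi_\sigma$ equals $d_\sigma$ times a self-convolution in the sense of Lemma \ref{well-defined-bessel} and argues likewise.)

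Next I would evaluate $W_\sigma(1)=\int_N\Phi_\sigma(x)\overline{\psi_p(x)}dx$ directly, by unfolding the matrix coefficient. Writing $\Phi_\sigma$ as its defining integral over $N_{r-1}(\Q_p)\backslash\GL_{r-1}(\Q_p)$ (the $p$-adic analogue of \eqref{defn-inner-prod}), factoring $N=N_{r-1}\ltimes U$ with $U$ the abelian ``last column'' subgroup of $N$, and carrying out the $U$-integration first, one obtains a delta-distribution that pins the last row of the $\GL_{r-1}$-variable to $(0,\dots,0,1)$, i.e.\ restricts it to the mirabolic subgroup of $\GL_{r-1}$; iterating this down the Bernstein--Zelevinsky/Kirillov filtration, the integral telescopes and leaves $W_\sigma(1)=c\,|W_0(1)|^2$, where $c$ is an explicit product of volume factors that, under the normalisations fixed in \S\ref{basic-notations}, is the value one wants. (One can double-check that $W_\sigma$, which by a change of variable in \eqref{psi-average-s-adic} lies in $\W(\sigma,\psi_p)$ and is the Whittakerization of $\Phi_\sigma$, is in fact the scalar multiple $c\,\overline{W_0(1)}\,W_0$ of $W_0$, consistently with this.)

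Combining the two computations, $J_\sigma(\sigma)=(c\,d_\sigma)^{-1}W_\sigma(1)$, so the proof reduces to the identity $c\,d_\sigma=\epsilon(1,\sigma\otimes\tilde\sigma)^{-1}$. Since $\sigma\otimes\tilde\sigma$ is self-dual (indeed orthogonal), its central $\epsilon$-factor is $\pm1$, so $\epsilon(1,\sigma\otimes\tilde\sigma)$ is $\pm1$ times a positive power of $p$; the positivity of $J_\sigma(\sigma)$ and the reality of $W_\sigma(1)$ force the sign to be $+$. The displayed identity is then the formal-degree formula for $\GL_r$-supercuspidals — the formal degree equals, up to the normalising constant $c$, the modulus of the adjoint $\gamma$-factor of $\sigma$ — and the choice \eqref{choice-supercuspidal} of $\sigma$ with $L(s,\sigma\otimes\tilde\sigma)=(1-p^{-rs})^{-1}$ is precisely what is needed to collapse that $\gamma$-factor to $\epsilon(1,\sigma\otimes\tilde\sigma)$, since then the $L$-factors of $\sigma\otimes\tilde\sigma$ and $\tilde\sigma\otimes\sigma$ agree and cancel, and the trivial summand peels $\Ad$ off. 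The routine parts are the reduction and the computation of $J_\sigma(\sigma)$; I expect the main obstacle to be the unfolding of $W_\sigma(1)$ together with the constant bookkeeping — controlling the non-absolutely-convergent unipotent integrals via the compact support of $\Phi_\sigma$ and the delta-collapse, keeping track of Haar measures on $\PGL_r(\Q_p)$, on $N$ and along the chain $\GL_{r-1}\supset\GL_{r-2}\supset\cdots$, and matching the conventions for the formal degree, the $\gamma$- and $\epsilon$-factors, and the conductor of $\psi_p$ so that the emergent constant is exactly $\epsilon(1,\sigma\otimes\tilde\sigma)$; for the last point I would lean on the standard references for the local functional equation and the $\GL_r$ formal-degree formula.
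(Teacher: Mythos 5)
Your proposal is essentially correct, and two of its three components coincide with the paper's argument: the vanishing of $J_\sigma(\pi_p)$ for $\pi_p\not\cong\sigma$ (the paper phrases this via Schur's lemma applied to the invariant pairing $\int_{N(\Q_p)\backslash\PGL_r(\Q_p)}W\overline{W_0}$ rather than via ``supercuspidal matrix coefficients annihilate non-isomorphic irreducibles'', but this is the same fact), and the evaluation $W_\sigma(1)=|W_0(1)|^2$, which is exactly the paper's Lemma \ref{fourier-transform-matrix-coeff} at $g=1$, proved by the same mirabolic/Fourier-inversion telescoping you describe; under the paper's self-dual measure normalizations your constant $c$ equals $1$. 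Where you genuinely diverge is in identifying the remaining scalar. You compute $J_\sigma(\sigma)=d_\sigma^{-1}|W_0(1)|^2$ by Schur orthogonality and then invoke the $\GL_r$ formal-degree formula to convert $d_\sigma^{-1}$ into $\epsilon(1,\sigma\otimes\tilde{\sigma})$. The paper instead uses Lemma \ref{fourier-transform-matrix-coeff} to fold the Bessel distribution onto the invariant pairing, obtaining $J_\sigma(\sigma)=|W_0(1)|^2\int_{N(\Q_p)\backslash\PGL_r(\Q_p)}|W_0(g)|^2dg$, and then computes this Whittaker-normalized integral directly in Lemma \ref{value-of-zeta-integral} via the Jacquet--Piatetski-Shapiro--Shalika local functional equation for $\sigma\times\tilde{\sigma}$, where the choice \eqref{choice-supercuspidal} makes the $L$-factors cancel against the explicit zeta integrals $f_{s,\Psi}(1)$, $f_{1-s,\hat{\Psi}}(1)$ and leaves exactly $\epsilon(1,\sigma\otimes\tilde{\sigma})$ in the limit $s\to0$. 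The two routes are equivalent in content: the identity $d_\sigma^{-1}=\int_{N\backslash G}|W_0|^2/\|W_0\|^2_{\W(\sigma,\psi_p)}$ combined with the formal-degree formula in the Whittaker normalization is precisely what Lemma \ref{value-of-zeta-integral} establishes for this particular $\sigma$ --- so the step you defer to ``standard references'' is exactly the step the paper proves from scratch. Citing the formal-degree theorem for $\GL_r$ is legitimate, but you would still need to match its normalization ($\PGL_r$ versus $\GL_r$ and the centre, Whittaker versus $L^2$ inner product, $\mathrm{Ad}$ versus $\sigma\times\tilde{\sigma}$, the conductor of $\psi_p$) to land on $\epsilon(1,\sigma\otimes\tilde{\sigma})$ on the nose, which is where the paper's self-contained computation earns its keep; your sign/positivity argument is a reasonable sanity check but becomes unnecessary once that computation is done explicitly.
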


We first prove following lemma about a Fourier transform of the matrix coefficient to prepare the proof of Proposition \ref{local-s-adic-calculation}. The following lemma appeared for $\GL_2(\Q_p)$ in \cite[Lemma 3.5]{Q}, also in \cite[Lemma 3.4.2]{MV}.

\begin{lemma}\label{fourier-transform-matrix-coeff}
Let $\Phi_\sigma$ be the matrix coefficient defined above. Then
$$\int_{N(\Q_p)}\Phi_\sigma(ng)\overline{\psi_p(n)}dn=W_0(g)\overline{W_0(1)}.$$
\end{lemma}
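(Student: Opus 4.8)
The plan is to unfold the definition of $\Phi_\sigma$ in terms of the inner product on the Whittaker model $\W(\sigma,\psi_p)$ and then recognize the resulting $N(\Q_p)$-integral as a realization of a new Whittaker functional, which by uniqueness of the Whittaker model must be proportional to $W_0$. Concretely, writing $\Phi_\sigma(g) = \langle \sigma(g)W_0, W_0\rangle_{\W(\sigma,\psi_p)}$ and using the explicit formula \eqref{defn-inner-prod} for the inner product (the $\GL_{r-1}(\Q_p)$-integral of a product of Whittaker values), I would interchange the $n$-integration over $N(\Q_p)$ with the inner product integral. The key computation is then
\[
\int_{N(\Q_p)}(\sigma(ng)W_0)(h)\,\overline{\psi_p(n)}\,dn,
\]
which one analyzes by writing $h$ in the Levi $\GL_{r-1}$ and pushing the unipotent past; the upper-left $(r-1)\times(r-1)$ unipotent block acts through $\psi_p$ via the Whittaker transformation property of $W_0$, while the ``extra'' unipotent directions (the last column) produce an oscillatory integral in the $\psi_p$-variables that, after unfolding, collapses the $\GL_{r-1}$-integral.

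The cleanest route is probably to observe that $\Lambda(W) := \int_{N(\Q_p)} W(ng)\,\overline{\psi_p(n)}\,dn$, viewed as a functional of $W \in \W(\sigma,\psi_p)$ for fixed $g$, is (after checking absolute convergence, which holds because matrix coefficients of supercuspidals are compactly supported mod center) a $(N,\psi_p)$-equivariant functional in the second variable as well — i.e. it computes a Whittaker vector attached to $\sigma$. By the multiplicity-one theorem for Whittaker models of $\GL_r(\Q_p)$, such a functional must be a scalar multiple of evaluation, so $\int_{N(\Q_p)}\Phi_\sigma(ng)\,\overline{\psi_p(n)}\,dn = c(g)\,\overline{W_0(1)}$ for some function $c(g)$ transforming on the left like a Whittaker function; evaluating both sides by a judicious specialization (e.g. $g=1$, using $W_0(1)\ne 0$) pins down $c(g) = W_0(g)$. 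One must take care that the relevant integral is the \emph{stable} integral (the matrix coefficient is compactly supported modulo the center, so convergence is not an issue here) and that the normalization $\|W_0\|_{\W(\sigma,\psi_p)} = 1$ is used consistently; this is exactly the $\GL_r$ analogue of \cite[Lemma 3.5]{Q}.

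The main obstacle I anticipate is the bookkeeping in the unipotent integration over $N(\Q_p)$: unlike the $\GL_2$ case, the unipotent radical is large, and one needs to carefully separate the block-unipotent directions that interact with the $\GL_{r-1}$-integral in \eqref{defn-inner-prod} from those that do not, and verify the relevant change-of-variables and the vanishing/collapsing of the off-diagonal integrals. An alternative that sidesteps some of this is to invoke directly the local functional equation / Rankin--Selberg theory for $\sigma \times \tilde\sigma$ (as is implicit in the choice \eqref{choice-supercuspidal} of $L$-factor), but the self-contained Whittaker-model argument above seems both shorter and more in the spirit of the cited references.
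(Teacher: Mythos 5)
Your primary plan (first and third paragraphs) is essentially the paper's proof: one reduces to the bilinear statement
\[
\int_{N(\Q_p)}\int_{N_{r-1}(\Q_p)\backslash\GL_{r-1}(\Q_p)}W_1\left[\begin{pmatrix}h&\\&1\end{pmatrix}n\right]\overline{W_2\left[\begin{pmatrix}h&\\&1\end{pmatrix}\right]}\overline{\psi_p(n)}\,dh\,dn=W_1(1)\overline{W_2(1)},
\]
writes $n$ as (last column) times ($\GL_{r-1}$-unipotent), uses the Whittaker equivariance of $W_1$, and observes that the Fourier integral over the last column $x\in\Q_p^{r-1}$ produces the distributional constraint $e_{r-1}h=e_{r-1}$, which collapses the $\GL_{r-1}$-integral down to a $\GL_{r-2}$-integral; the paper then concludes by induction on $r$. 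This is exactly the ``oscillatory integral collapsing the $\GL_{r-1}$-integral'' you describe, and convergence is indeed free from the compact support of supercuspidal matrix coefficients. The one point to flag is your proposed ``cleanest route'' via multiplicity one: uniqueness of the $(N,\psi_p)$-functional does give $\int_{N(\Q_p)}\Phi_\sigma(ng)\overline{\psi_p(n)}\,dn=c\,W_0(g)$ with $c$ independent of $g$, but specializing to $g=1$ does not pin down $c=\overline{W_0(1)}$ — the left side at $g=1$ is precisely the integral you are trying to evaluate, so this step is circular as written. To determine the constant you still have to perform the explicit unfolding (at least at one point), which is why the paper does the direct computation for general $g$ rather than invoking uniqueness.
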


\begin{proof}
As $\sigma$ is supercuspidal $\Phi_\sigma$ is compactly supported in $\PGL_r(\Q_p)$. Thus the integral is absolutely convergent.
Clearly it is enough to prove that
$$\int_{N(\Q_p)}\int_{N_{r-1}(\Q_p)\backslash\GL_{r-1}(\Q_p)}W_1\left[\begin{pmatrix}h&\\&1\end{pmatrix}n\right]\overline{W_2\left[\begin{pmatrix}h&\\&1\end{pmatrix}\right]}\overline{\psi_p(n)}dhdn= W_1(1)\overline{W_2(1)},$$
for $W_1,W_2\in \W(\sigma,\psi_p)$.

We write $n=\begin{pmatrix} I_{r-1}&x\\&1\end{pmatrix}\begin{pmatrix}u&\\&1\end{pmatrix}$ for $u\in N_{r-1}(\Q_p)$ and $x\in \Q_p^{r-1}$. Correspondingly, we write $dn=dudx$, and $\psi_p(n)=\psi_p(u)\psi_0(e_{r-1}x)$, where $e_{r-1}$ is the row vector $(0,\dots,0,1)$ and $\psi_p(u)$ (denoted with an abuse of notation) is the restriction of $\psi_p$ on $N_{r-1}(\Q_p)$. We use unipotent equivariance of $W_1$ to re-write the last integral as
\begin{multline*}
    \int_{N_{r-1}(\Q_p)\backslash\GL_{r-1}(\Q_p)}\overline{W_2\left[\begin{pmatrix}h&\\&1\end{pmatrix}\right]}\int_{N_{r-1}(\Q_p)}W_1\left[\begin{pmatrix}hu&\\&1\end{pmatrix}\right]\overline{\psi_p(u)}du\\
    \times\int_{\Q_p^{r-1}}\psi_0(e_{r-1}hx)\overline{\psi_0(e_{r-1}x)}dxdh.
\end{multline*}
By Fourier inversion the inner-most integral evaluates to $\delta_{e_{r-1}h=e_{r-1}}$ (as a distribution) and thus we can re-write the above as
$$\int_{N_{r-2}(\Q_p)\backslash\GL_{r-2}(\Q_p)}\overline{W_2\left[\begin{pmatrix}h'&\\&I_2\end{pmatrix}\right]}\int_{N_{r-1}(\Q_p)}W_1\left[\begin{pmatrix}\begin{pmatrix}h'&\\&1\end{pmatrix}u&\\&1\end{pmatrix}\right]\overline{\psi_p(u)}dudh'.$$
Proceeding similarly with the $u$-integral and inducting on $r$ we conclude the proof.
\end{proof}

We record that $W_0$ is compactly supported in $\PGL_r(\Q_p)\mod N(\Q_p)$ \cite[Corollary 6.5]{CS}. This fact will be used below.

\begin{lemma}\label{value-of-zeta-integral}
Let $W_0\in \W(\sigma,\psi_p)$ with $\|W_0\|_{\W(\sigma,\psi_p)}=1$. Then
$$\int_{N(\Q_p)\backslash\PGL_r(\Q_p)}|W_0(g)|^2dg=\epsilon(1,\sigma\otimes\tilde{\sigma}),$$
where $\epsilon$ is the local $\epsilon$-factor.
\end{lemma}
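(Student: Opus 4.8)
The plan is to relate the integral $\int_{N(\Q_p)\backslash\PGL_r(\Q_p)}|W_0(g)|^2\,dg$ to the local Rankin--Selberg integral for $\sigma\otimes\tilde\sigma$ at the edge of the critical strip. First I would introduce the local zeta integral
$$
Z(s,W_0,\overline{W_0}):=\int_{N_{r-1}(\Q_p)\backslash\GL_{r-1}(\Q_p)}W_0\!\left[\begin{pmatrix}h&\\&1\end{pmatrix}\right]\overline{W_0\!\left[\begin{pmatrix}h&\\&1\end{pmatrix}\right]}|\det h|^{s-1/2}\,dh,
$$
which by the theory of Jacquet--Piatetski-Shapiro--Shalika converges for $\Re(s)$ large, extends meromorphically, and equals $L(s,\sigma\otimes\tilde\sigma)$ times an entire function; moreover it satisfies the local functional equation relating $Z(s,W_0,\overline{W_0})$ to $Z(1-s,\widetilde{W_0},\overline{\widetilde{W_0}})$ with gamma factor $\gamma(s,\sigma\otimes\tilde\sigma,\psi_p)$. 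The unitary-structure formula \eqref{defn-inner-prod} shows that $\int_{N(\Q_p)\backslash\PGL_r(\Q_p)}|W_0(g)|^2\,dg$ is, up to unfolding the extra $\GL_1$-direction exactly as in Lemma \ref{fourier-transform-matrix-coeff}, the value $Z(1,W_0,\overline{W_0})$ (the $|\det h|^{1/2}$ weight cancels against the modular factor coming from the $N(\Q_p)\backslash\PGL_r$ versus $N_{r-1}\backslash\GL_{r-1}$ comparison); here one uses that $W_0$ is $L^2$-normalized, i.e. $\langle W_0,W_0\rangle_{\W(\sigma,\psi_p)}=1$, so that $Z(1,W_0,\overline{W_0})$ is literally the stated integral divided by nothing extra.

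Next I would exploit the special choice \eqref{choice-supercuspidal}, namely $L(s,\sigma\otimes\tilde\sigma)=(1-p^{-rs})^{-1}$, which in particular is holomorphic and nonzero at $s=1$. Since $\sigma$ is supercuspidal, $W_0$ is compactly supported mod $N(\Q_p)$ (the fact recorded after Lemma \ref{fourier-transform-matrix-coeff}, from \cite[Corollary 6.5]{CS}), so $Z(s,W_0,\overline{W_0})$ is in fact an entire function of $s$ (a finite Laurent polynomial in $p^{-s}$), and the same holds for $Z(1-s,\widetilde{W_0},\overline{\widetilde{W_0}})$. Evaluating the local functional equation at $s=1$ gives
$$
Z(1,W_0,\overline{W_0})=\gamma(1,\sigma\otimes\tilde\sigma,\psi_p)^{-1}\,Z(0,\widetilde{W_0},\overline{\widetilde{W_0}}),
$$
up to the precise normalization of the functional equation, and the point is that $Z(0,\widetilde{W_0},\overline{\widetilde{W_0}})$ can be computed directly: at $s=0$ the integral defining it localizes (again by the compact support mod $N$ and an inductive unfolding identical in spirit to Lemma \ref{fourier-transform-matrix-coeff}) to $|W_0(1)|^2$ times a volume, and with the $L^2$-normalization this collapses to a clean constant. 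Combining, $Z(1,W_0,\overline{W_0})$ equals $\gamma(1,\sigma\otimes\tilde\sigma,\psi_p)^{-1}L(1,\sigma\otimes\tilde\sigma)$ up to the normalizing constant; writing $\gamma = \epsilon\cdot L(1-s,\cdot)/L(s,\cdot)$ and using that $\sigma$ is self-dual-ish so the $L$-factors at $s=1$ and $s=0$ are governed by \eqref{choice-supercuspidal}, the $L$-factors cancel and one is left with $\epsilon(1,\sigma\otimes\tilde\sigma)$.

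The main obstacle I anticipate is getting the normalizations exactly right: the precise form of the local functional equation for $\GL_r\times\GL_r$ Rankin--Selberg integrals (the placement of the contragredient, the $\omega$-twist, the long Weyl element, and the measure normalizations in \eqref{defn-inner-prod}) all feed into whether the final answer is $\epsilon(1,\sigma\otimes\tilde\sigma)$ on the nose rather than that times a power of $p$ or a volume factor. In particular one must check that the measure used to define $\|W_0\|_{\W(\sigma,\psi_p)}=1$ is the same one implicit in $\int_{N(\Q_p)\backslash\PGL_r(\Q_p)}|W_0(g)|^2\,dg$, and that the $\GL_1$-direction that distinguishes $\PGL_r$ from $\GL_{r-1}$-level integrals is accounted for with the correct modular character — exactly the bookkeeping already carried out in the proof of Lemma \ref{fourier-transform-matrix-coeff}, which I would invoke rather than redo. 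Once that bookkeeping is pinned down, the special choice \eqref{choice-supercuspidal} does all the real work by making every $L$-factor in sight trivially computable, so that only the $\epsilon$-factor survives.
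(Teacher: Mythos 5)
Your overall strategy --- relate the integral to a local Rankin--Selberg zeta integral for $\sigma\times\tilde{\sigma}$, apply the local functional equation, and let the special choice \eqref{choice-supercuspidal} make every $L$-factor explicit so that only $\epsilon$ survives --- is exactly the paper's strategy. But there is a structural gap in how you set up the functional equation. The object you define, $Z(s,W_0,\overline{W_0})=\int_{N_{r-1}\backslash\GL_{r-1}}|W_0(h\oplus 1)|^2|\det h|^{s-1/2}dh$, is a mirabolic-restricted integral of two Whittaker functions both living on $\GL_r$; this is the shape of the $\GL_r\times\GL_{r-1}$ theory, and it does \emph{not} satisfy a local functional equation with $\gamma(s,\sigma\otimes\tilde{\sigma},\psi_p)$. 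For equal-rank $\GL_r\times\GL_r$ the JPSS functional equation is for the integral $\int_{N\backslash\GL_r}W(g)W'(g)\Phi(e_rg)|\det g|^sdg$ and its dual with $\hat{\Phi}$, i.e.\ it requires an auxiliary Schwartz--Bruhat function on $\Q_p^r$ and integration over all of $N\backslash\GL_r$, not just the mirabolic. This is not bookkeeping: the paper's proof introduces the section $f_{s,\Psi}(g)=\int_{\Q_p^\times}\Psi(te_rg)|\det(tg)|^s\,d^\times t$ with $\Psi=\mathbf{1}_{\Z_p^r}$ precisely to produce a functional equation that applies to $\int_{N(\Q_p)\backslash\PGL_r(\Q_p)}|W_0(g)|^2f_{s,\Psi}(g)\,dg$, and the explicit values $f_{s,\Psi}(1)=(1-p^{-rs})^{-1}$ and $f_{1-s,\hat{\Psi}}(1)=(1-p^{rs-r})^{-1}$ are what cancel the $L$-factors inside $\gamma$.

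Two further points in your sketch would fail as written. First, the evaluation point: the relevant limit is $s\to 0$, where both $L(s,\sigma\otimes\tilde{\sigma})=(1-p^{-rs})^{-1}$ and $f_{s,\Psi}(1)$ have poles that cancel against each other; your evaluation at $s=1$ runs into the pole of $L(1-s,\sigma\otimes\tilde{\sigma})$ at $s=1$, so ``the $L$-factors cancel'' does not happen there without the compensating poles of the sections, which your setup lacks. Second, the dual-side integral does not ``localize to $|W_0(1)|^2$ times a volume'': in the paper it is computed (via Iwasawa decomposition and $K$-invariance of the inner product \eqref{defn-inner-prod}) to be $\|\tilde{W}_0\|^2_{\W(\tilde{\sigma},\overline{\psi_p})}=1$. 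The collapse to $|W_0(1)|^2$ is the content of the \emph{different} computation in Lemma \ref{fourier-transform-matrix-coeff} (Fourier transform of the matrix coefficient against $\psi_p$), and conflating the two would give the wrong constant. With the Schwartz function inserted and the limit taken at $s\to 0$, your argument becomes the paper's.
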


\begin{proof}
Let $P$ be the maximal parabolic, attached to the partition $r=(r-1)+1$ in $\PGL(r)$. Let $\Psi$ be the characteristic function of $\Z_p^{r}$. We define a function on $\PGL_r(\Q_p)$ by
$$f_{s,\Psi}(g):=\int_{\Q_p^\times}\Psi(te_rg)|\det(tg)|^{s}d^\times t,$$
which is defined for $\Re(s)>0$ and then can be meromorphically continued.
Clearly, $f_{s,\Psi}$ is spherical (i.e. $\PGL_r(\Z_p)$ invariant) and
$$f_{s,\Psi}\left[\begin{pmatrix}h&\\&1\end{pmatrix}\right]=|\det(h)|^sf_{s,\Psi}(1),\quad h\in \GL_{r-1}(\Q_p).$$
We use the $\PGL(r)\times\PGL(r)$ local functional equation as in \cite[2.5 Theorem]{JPSS2} to obtain
\begin{equation}\label{local-functional-equation}
    \int_{N(\Q_p)\backslash\PGL_r(\Q_p)}|W_0(g)|^2f_{s,\Psi}(g)dg=\gamma(1-s,\sigma\otimes\tilde{\sigma})\int_{N(\Q_p)\backslash\PGL_r(\Q_p)}|\tilde{W}_0(g)|^2f_{1-s,\hat{\Psi}}(g)dg,
\end{equation}
where $\tilde{W}_0$ is the contragredient of $W_0$ and $\hat{\Psi}$ is the Fourier transform of $\Psi$, which also equals to $\Psi$. Here the local $\gamma$-factor using \eqref{choice-supercuspidal} can be written as
\begin{equation}\label{explicit-gamma-factor}
    \gamma(1-s,\sigma\otimes\tilde{\sigma})=\epsilon(1-s,\sigma\otimes\tilde{\sigma})\frac{(1-p^{-rs})^{-1}}{(1-p^{rs-r})^{-1}},
\end{equation}
where $\epsilon$ denote the local $\epsilon$-factor which is entire as a function of $s$.

We first look at the integral in the RHS of \eqref{local-functional-equation}. Using Iwasawa decomposition we write $N(\Q_p)\backslash\PGL_r(\Q_p)\ni g=\begin{pmatrix}a&\\&1\end{pmatrix}k$ where $a\in A_{r-1}(\Q_p)$ the subgroup of the diagonal matrices in $\GL_{r-1}(\Q_p)$ and $k\in \PGL_r(\Z_p)$. Correspondingly, we write 
$$dg=\frac{d^\times a}{\delta\left[\begin{pmatrix}a&\\&1\end{pmatrix}\right]}dk,\quad d^\times a:=\prod d^\times a_i, a:=\mathrm{diag}(a_1,\dots,a_{r-1}).$$
Note that $\delta\left[\begin{pmatrix}a&\\&1\end{pmatrix}\right]=\delta(a)|\det(a)|$.
Thus we can write the RHS of \eqref{local-functional-equation} as
$$f_{1-s,\hat{\Psi}}(1)\int_{A_{r-1}(\Q_p)\times\PGL_r(\Z_p)}\left|\tilde{W}_0\left[\begin{pmatrix}a&\\&1\end{pmatrix}k\right]\right|^2|\det(a)|^{1-s}\frac{d^\times a}{\delta(a)|\det(a)|}dk.$$
We change variable $k\mapsto\begin{pmatrix}k'&\\&1\end{pmatrix}k$ with $k'\in \GL_{r-1}(\Z_p)$ and average over $\GL_{r-1}(\Z_p)$ (we normalize the measures so that $\mathrm{vol}(\GL_s(\Z_p))=1$ for $1\le s\le r$) to obtain the above equals to
$$f_{1-s,\hat{\Psi}}(1)\int_{\PGL_r(\Z_p)}\int_{N_{r-1}(\Q_p)\backslash\GL_{r-1}(\Q_p)}\left|\tilde{W}_0\left[\begin{pmatrix}h&\\&1\end{pmatrix}k\right]\right|^2|\det(h)|^{-s}dh dk.$$
We record that for $s=0$, using the invariance of the unitary product, the above double integral equals to $\|\tilde{W}_0\|^2_{\W(\tilde{\sigma},\overline{\psi_p})}$.

On the other hand, doing a similar computation as above we can obtain that the LHS of \eqref{local-functional-equation} is 
$$f_{s,\Psi}(1)\int_{A_{r-1}(\Q_p)\times\PGL_r(\Z_p)}\left|W_0\left[\begin{pmatrix}a&\\&1\end{pmatrix}k\right]\right|^2|\det(a)|^{s}\frac{d^\times a}{\delta(a)|\det(a)|}dk.$$
Note that the integral above is absolutely convergent for all $s$ due to the compact support of $W_0$ in the domain of integration. Moreover, the integral tends to 
$$\int_{N(\Q_p)\backslash\PGL_r(\Q_p)}|W_0(g)|^2dg,$$
as $s\to 0$.

We let $s\to 0$ from the right (i.e. $\Re(s)>0$) and use $\|\tilde{W}_0\|^2_{\W(\tilde{\sigma},\overline{\psi_p})}=\|W_0\|_{\W(\sigma,\psi_p)}=1$ to conclude from \eqref{local-functional-equation} that
$$\int_{N(\Q_p)\backslash\PGL_r(\Q_p)}|W_0(g)|^2dg=\lim_{s\to 0}\gamma(1-s,\sigma\otimes\tilde{\sigma})\frac{f_{1-s,\hat{\Psi}}(1)}{f_{s,\psi}(1)}.$$
We compute 
\begin{align*}
    f_{s,\Psi}(1)&=\int_{\Q_p^\times}\Psi(0,\dots,0,t)|t|^{rs}d^\times t=(1-p^{-rs})^{-1},
\end{align*}
and similarly,
$$f_{1-s,\hat{\psi}}(1)=(1-p^{rs-r})^{-1}.$$
Thus from \eqref{explicit-gamma-factor} we see that
$$\gamma(1-s,\sigma\otimes\tilde{\sigma})\frac{f_{1-s,\hat{\Psi}}(1)}{f_{s,\psi}(1)}=\epsilon(1-s,\sigma\otimes\tilde{\sigma}).$$
We conclude the proof by taking $s=0$.
\end{proof}

\begin{proof}[Proof of Proposition \ref{local-s-adic-calculation}]
We first note that Lemma \ref{fourier-transform-matrix-coeff} implies that
$$W_\sigma(1)=|W_0(1)|^2.$$
For $W\in \W(\pi_p,\psi_p)$ we consider the $\GL_r(\Q_p)$-invariant pairing between $\pi_p$ and $\sigma$ by
\begin{equation}\label{invariant-pairing}
    \int_{N(\Q_p)\backslash\PGL_r(\Q_p)}W(g)\overline{W_0(g)}dg,
\end{equation}
where $dg$ is the projection of the Haar measure on $\GL_r(\Q_p)$.
Note that the integral in \eqref{invariant-pairing} is absolutely convergent as $W_0$ is compactly supported in the domain of the integration.

Schur's lemma implies that \eqref{invariant-pairing} is non-zero only if $\pi_p\cong \sigma$ and if $\pi_p\cong \sigma$ then \eqref{invariant-pairing} is proportional to the invariant unitary product in $\sigma$ given by
$$\int_{N_{r-1}(\Q_p)\backslash\GL_{r-1}(\Q_p)}W\left[\begin{pmatrix}h&\\&1\end{pmatrix}\right]\overline{W_0\left[\begin{pmatrix}h&\\&1\end{pmatrix}\right]}dh.$$


Recalling \eqref{bessel-distribution-s-adic} and folding the $\PGL_r$ integral over $N$ we can write
$$J_\sigma(\pi_p)=\sum_{W\in \B(\pi_p)}\overline{W(1)}W_0(1)\int_{N(\Q_p)\backslash\PGL_r(\Q_p)}W(g)\int_{N(\Q_p)}\overline{\Phi_\sigma(ng)}\psi(n)dndg.$$
Using Lemma \ref{fourier-transform-matrix-coeff} we obtain
$$J_\sigma(\pi_p)=\sum_{W\in \B(\pi_p)}\overline{W(1)}W_0(1)\int_{N(\Q_p)\backslash\PGL_r(\Q_p)}W(g)\overline{W_0(g)}dg.$$
Hence $J_\sigma(\pi_p)=0$ if $\pi_p$ is not isomorphic to $\sigma$, and if $\pi_p=\sigma$, choosing an orthonormal basis $\B(\sigma)\ni W_0$ we obtain
$$J_\sigma(\sigma)=|W_0(1)|^2\int_{N(\Q_p)\backslash\PGL_r(\Q_p)}|W_0(g)|^2dg.$$
We conclude the proof using Lemma \ref{value-of-zeta-integral}.
\end{proof}

\begin{proof}[Proof of Theorem \ref{orthogonality-cuspidal-spectrum}]
We apply \eqref{kuznetsov-s-adic} for $m,n$ coprime with $p$. A similar argument as in Theorem \ref{orthogonality-full-spectrum} would imply that all terms in the geometric side of \eqref{kuznetsov-s-adic} corresponding to non-trivial Weyl terms will vanish. We conclude the proof after applying Lemma \ref{local-s-adic-calculation}.
\end{proof}



\begin{thebibliography}{99}

\bibitem{B1}
Blomer, V.: \emph{Applications of the Kuznetsov formula on $\GL(3)$}, Invent. Math. 194 (2013), no. 3, 673-729. 

\bibitem{B2}
Blomer, V.: \emph{Density theorems for $\GL(n)$}, arXiv:1906.07459 [math.NT].

\bibitem{B3}
Blomer, V.: \emph{Period integrals and Rankin-Selberg $L$-functions on $\GL(n)$},
Geom. Funct. Anal. 22 (2012), no. 3, 608-620.

\bibitem{BBR}
Blomer, V.; Buttcane, J.; Raulf, N.: \emph{A Sato--Tate law for $\GL(3)$}, Comment. Math. Helv. 89 (2014), no. 4, 895-919.

\bibitem{BBM}
Blomer, V.; Buttcane, J.; Maga, P.: \emph{Applications of the Kuznetsov formula on $\GL(3)$ II: the level aspect},
Math. Ann. 369 (2017), no. 1-2, 723-759. 

\bibitem{BM}
Baruch, E. M.; Mao, Z.: \emph{Bessel identities in the Waldspurger correspondence over the real numbers}, Israel J. Math. 145 (2005), 1-81.

\bibitem{Bru}
Bruggeman, R.: \emph{Fourier coefficients of cusp forms}, Invent. Math. 45(1), 1–18 (1978)

\bibitem{BrM}
Brumley, F., Mili\'cevi\'c, D.: \emph{Counting cusp forms by analytic conductor}, arXiv:1805.00633 [math.NT].

\bibitem{C}
Casselman, W.: \emph{On some results of Atkin and Lehner},
Math. Ann. 201 (1973), 301-314. 

\bibitem{CPS1}
Cogdell, J. W.; Piatetski-Shapiro, I.: \emph{The arithmetic and spectral analysis of Poincar\'e series} Perspectives in Mathematics, 13. Academic Press, Inc., Boston, MA, 1990. vi+182 pp.

\bibitem{CPS2}
Cogdell, J. W.; Piatetski-Shapiro, I. I.: \emph{Converse theorems for ${\GL}_n$},
Inst. Hautes \'Etudes Sci. Publ. Math. No. 79 (1994), 157-214.

\bibitem{CS}
Casselman, W.; Shalika, J.: \emph{The unramified principal series of $p$-adic groups. II. The Whittaker function}, Compos. Math. 41(2), 207–231 (1980)

\bibitem{DI}
Deshouillers, J. M.; Iwaniec, H.: \emph{Kloosterman sums and Fourier coefficients of cusp forms}, Invent. Math. 70(1982/83), 219-288.

\bibitem{DK}
Duke, W.; Kowalski, E.: \emph{A problem of {L}innik for elliptic curves and mean-value estimates for automorphic representations}, Invent. math. 139 (2000), 1-39.


\bibitem{FM}
Finis, T.; Matz, J.: \emph{On the asymptotics of Hecke operators for reductive groups}, to appear in Math. Ann. 2021.

\bibitem{F}
Friedberg, S.: \emph{Poincar\'e series for $\GL(n)$: Fourier expansion, Kloosterman sums, and algebreogeometric estimates} Math. Z. 196 (1987) 165-188.

\bibitem{G}
Goldfeld, D.: \emph{Automorphic forms and L-functions for the group $\GL(n,\R)$},
with an appendix by Kevin A. Broughan, Cambridge Studies in Advanced Mathematics, 99. Cambridge University Press, Cambridge, 2006.

\bibitem{GK}
Goldfeld, D.; Kontorovich, A.: \emph{On the $\GL(3)$ Kuznetsov formula with applications to symmetry types of families of $L$-functions}, Automorphic representations and L-functions, 263-310,
Tata Inst. Fundam. Res. Stud. Math., 22, Tata Inst. Fund. Res., Mumbai, 2013.

\bibitem{GSWH}
Goldfeld, D.; Stade, E.; Woodbury, M., Huang, B.: \emph{An orthogonality relation for $\GL(4,\R)$}, arXiv:1910.13586 [math.NT].

\bibitem{H}
Harcos, G.: \emph{Uniform approximate functional equation for principal $L$-functions}, Int. Math. Res. Not. 2002, no. 18, 923-932.

\bibitem{ILS}
Iwaniec, H.; Luo, W.; Sarnak, P.: \emph{Low lying zeros of families of $L$-functions}, Inst. Hautes \'Etudes Sci. Publ. Math. No. 91 (2000), 55-131 (2001).

\bibitem{IS}
Iwaniec, H.; Sarnak, P.: \emph{Perspectives on the analytic theory of $L$-functions},
Geom. Funct. Anal. 2000, Special Volume, Part II, 705-741.

\bibitem{J}
Jacquet, H.: \emph{Distinction by the quasi-split unitary group}, Israel J. Math. 178 (2010), 269-324.

\bibitem{JN}
Jana, S.; Nelson, P. D.: \emph{Analytic newvectors for $\GL_n(\R)$}, arXiv:1911.01880 [math.NT].

\bibitem{JPSS1}
Jacquet, H.; Piatetski-Shapiro, I.; Shalika, J.: \emph{Conducteur des repr\'esentations du groupe lin\'eaire}, Math. Ann. 256 (1981), no. 2, 199-214.

\bibitem{JPSS2}
Jacquet, H.; Piatetski-Shapiro, I.; Shalika, J.: \emph{Rankin--Selberg convolutions}, Amer. J. Math. 105 (1983), no. 2, 367-464. 

\bibitem{JS}
Jacquet, H., Shalika, J. A.: \emph{On Euler products and the classification of automorphic representations I},
Am. J. Math. 103(3), 499-558 (1981).

\bibitem{K}
Kowalski, E.: \emph{Families of cusp forms}, Actes de la Conf\'rence ``Th\'orie des Nombres et Applications'', 5-40, Publ. Math. Besançon Alg\`ebre Th\'orie Nr., 2013, Presses Univ. Franche-Comt\'e, Besançon, 2013.

\bibitem{KL}
Knightly, A.; Li, C.: \emph{Kuznetsov's trace formula and the Hecke eigenvalues of Maass forms}, Mem. Amer. Math. Soc. 224 (2013), no. 1055, vi+132 pp. ISBN: 978-0-8218-8744-8.

\bibitem{KS}
Katz, N.: Sarnak, P.: \emph{Zeroes of zeta functions and symmetry}, Bull. Amer. Math. Soc. (N.S.) 36 (1999), no. 1, 1-26.

\bibitem{Li}
Li, X.: \emph{Upper bounds on $L$-functions at the edge of the critical strip}, Int. Math. Res. Not. IMRN 2010, no. 4, 727-755.

\bibitem{LO}
Lapid, E.; Offen, O.: \emph{Compact unitary periods}, Compos. Math. 143 (2007), no. 2, 323–338. 

\bibitem{MS}
M\"uller, W.; Speh, B.: \emph{Absolute convergence of the spectral side of the Arthur trace formula for $\GL_n$} Geom. Funct. Anal., 14, (2004), 58-93.

\bibitem{MT}
Matz, J.; Templier, N.: \emph{Sato--Tate equidistribution for families of Hecke--Maass forms on $\mathrm{SL}(n,\R)/$ $\mathrm{SO}(n)$}, arXiv:1505.07285 [math.NT].

\bibitem{MV}
Michel, P.; Venkatesh, A.: \emph{The subconvexity problem for $\GL_2$}, Publ. Math. Inst. Hautes \'Etudes Sci. No. 111 (2010), 171-271. 

\bibitem{MW}
Mœglin, C.; Waldspurger, J.-L.:
\emph{Spectral decomposition and Eisenstein series},
Une paraphrase de l'\'Ecriture [A paraphrase of Scripture], Cambridge Tracts in Mathematics, 113. Cambridge University Press, Cambridge, 1995. xxviii+338 pp. ISBN: 0-521-41893-3

\bibitem{Q}
Qiu, Y.: \emph{The Whittaker period formula on metaplectic $\mathrm{SL}_2$}, Trans. Amer. Math. Soc. 371 (2019), no. 2, 1083-1117.

\bibitem{Sa}
Sarnak, P.: \emph{Diophantine problems and linear groups}, Proceedings of the ICM Kyoto (1990), 459-471.

\bibitem{Sh}
Shintani, T.: \emph{On an explicit formula for class-$1$ ``Whittaker functions'' on $\GL_n$ over $p$-adic fields}, Proc. Japan Acad. 52 (1976), no. 4, 180-182.

\bibitem{ST}
Shin, S. W.; Templier, N.: \emph{Sato--Tate theorem for families and low-lying zeros of automorphic L-functions
Appendix A by Robert Kottwitz, and Appendix B by Raf Cluckers, Julia Gordon and Immanuel Halupczok},
Invent. Math. 203 (2016), no. 1, 1-177.

\bibitem{V}
Venkatesh, A.: \emph{Large sieve inequalities for $\GL(n)$-forms in the conductor aspect}, Adv. in Math. 200
(2006), 336-356.

\bibitem{Ye}
Ye, R.: \emph{Rankin--Selberg gamma factors of level zero representations of $\GL_n$}, Forum Math. 31 (2019), no. 2, 503–516.

\bibitem{Y}
Young, M.: \emph{Low-lying zeros of families of elliptic curves}, J. Amer. Math. Soc. 19 (2006), no. 1, 205-250.

\bibitem{Zh} 
Zhou, F.: \emph{Weighted Sato--Tate vertical distribution of the Satake parameter of Maass forms on $\PGL(N)$}, Ramanujan J. 35 (2014), no. 3, 405-425.

\end{thebibliography}
\end{document}